\documentclass[10pt,notitlepage]{amsart}
\usepackage{mathrsfs}
\usepackage[mathcal]{eucal}
\usepackage{microtype}

\usepackage{fullpage}

\usepackage{textcmds}  
\usepackage{amsfonts, amssymb, amsmath, amsthm, amsopn, bm, latexsym, bbm}
\usepackage[dvipsnames,usenames]{color}
\usepackage{graphicx, fancyhdr, fancybox, ifthen, enumitem}
\usepackage[ps,matrix,curve,frame,arrow,rotate,line]{xy}

\usepackage{xr-hyper}

\usepackage{aliascnt} 
\usepackage[colorlinks=true,linkcolor=blue,citecolor=blue,urlcolor=blue,citebordercolor={0 0 1},urlbordercolor={0 0 1},linkbordercolor={0 0 1}]{hyperref} 
\usepackage[alphabetic]{amsrefs} 


\let\oldfootnotemark\footnotemark
\let\oldfootnotetext\footnotetext
\let\oldfootnote\footnote
\renewcommand\footnote[1]{\addtocounter{footnote}{1}\hypertarget{fnbackref.\arabic{footnote}}{}\addtocounter{footnote}{-1}\oldfootnote{#1\fnbackref}}
\renewcommand\footnotemark{\addtocounter{footnote}{1}\hypertarget{fnbackref.\arabic{footnote}}{}\addtocounter{footnote}{-1}\oldfootnotemark}
\renewcommand\footnotetext[1]{\oldfootnotetext{#1\fnbackref}}
\newcommand{\fnbackref}{\hyperlink{fnbackref.\arabic{footnote}}{\footnotesize$\uparrow$}}




\definecolor{mydefnblue}{rgb}{.2,.2,.7}
\newcommand{\demph}[1]{\textcolor{mydefnblue}{\it #1}}

\makeatletter
\newcommand{\sss}{\@startsection{subsubsection}{2}{0pt}{-3ex
plus -1ex minus -0.2ex}{-2mm plus -0pt minus
-2pt}{\normalfont\bfseries}} 
\makeatother

\definecolor{mynotecol}{rgb}{.4,.1,.1}
\setlength{\marginparwidth}{1.2in}
\let\oldmarginpar\marginpar
\renewcommand\marginpar[1]{\mbox{}\oldmarginpar[\raggedleft\hspace{0pt}\textcolor{mynotecol}{\small #1}]%
{\raggedright\hspace{0pt}\textcolor{mynotecol}{\small #1}}}


\newcommand{\refnewtheoremn}[4]{%
\newaliascnt{#1}{#2}
\newtheorem{#1}[#1]{#3}
\aliascntresetthe{#1}
\expandafter\providecommand\csname #1autorefname\endcsname{#4}}

\newcommand{\refnewtheorem}[3]{\refnewtheoremn{#1}{#2}{#3}{#3}}

\theoremstyle{plain}
\newtheorem{theorem}{Theorem}[subsection]

\refnewtheorem{conjecture}{theorem}{Conjecture}
\refnewtheoremn{prop}{theorem}{Proposition}{Prop.}
\refnewtheorem{lemma}{theorem}{Lemma}
\refnewtheoremn{corollary}{theorem}{Corollary}{Cor.}
\refnewtheorem{claim}{theorem}{Claim}

\theoremstyle{definition}
\refnewtheoremn{defn}{theorem}{Definition}{Def.}
\refnewtheorem{constr}{theorem}{Construction}
\refnewtheorem{example}{theorem}{Example}
\refnewtheorem{computation}{theorem}{Computation}
\refnewtheorem{notation}{theorem}{Notation}

\refnewtheorem{remark}{theorem}{Remark}
\refnewtheorem{spec}{theorem}{Speculation}

\refnewtheoremn{na}{theorem}{}{\!\!}

	{\begin{Sbox}\begin{minipage}{\linewidth}\begin{conjecture}}%
	{\end{conjecture}\end{minipage}\end{Sbox}\par\vspace{2pt}\noindent\fbox{\vbox{\vspace{.25pt}\TheSbox\vspace{.25pt}}}}

	{\begin{Sbox}\begin{minipage}{\linewidth}\begin{theorem}}%
	{\end{theorem}\end{minipage}\end{Sbox}\par\vspace{2pt}\noindent\fbox{\vbox{\vspace{.25pt}\TheSbox\vspace{.25pt}}}}

	{\begin{Sbox}\begin{minipage}{\linewidth}\begin{defn}}%
	{\end{defn}\end{minipage}\end{Sbox}\par\vspace{2pt}\noindent\fbox{\vbox{\vspace{.25pt}\TheSbox\vspace{.25pt}}}}





\newcommand{\eqdef}{\overset{\text{\tiny def}}{=}}

\newcommand{\isom}{\simeq}           


\newcommand{\sh}[1]{#1^{!}}
\newcommand{\dual}{{\mkern-1.5mu{}^{\vee}}}




\newcommand{\res}[2]{\left. #1 \right|_{#2}}

\renewcommand{\AA}{\mathbb{A}}

\newcommand{\GG}{\mathbb{G}}

\newcommand{\LL}{\mathbb{L}}

\newcommand{\ZZ}{\mathbb{Z}}


\newcommand{\ff}{\mathfrak{f}}
\renewcommand{\gg}{\mathfrak{g}}

\newcommand{\pp}{\mathfrak{p}}

\newcommand{\F}{\mathscr{F}}
\newcommand{\G}{\mathscr{G}}

\newcommand{\K}{\mathscr{K}}

\newcommand{\N}{\mathscr{N}}
\renewcommand{\P}{\mathscr{P}}

\newcommand{\X}{\mathscr{X}}
\newcommand{\Y}{\mathscr{Y}}
\newcommand{\Z}{\mathscr{Z}}

\newcommand{\A}{\mathcal{A}}
\newcommand{\B}{\mathcal{B}}
\newcommand{\C}{\mathcal{C}}
\newcommand{\D}{\mathcal{D}}
\renewcommand{\H}{\mathcal{H}}

\newcommand{\M}{\mathcal{M}}

\renewcommand{\O}{\mathcal{O}}




\newcommand{\op}{{\mkern-1mu\scriptstyle{\mathrm{op}}}}

\renewcommand{\mod}{\text{\rm{-mod}}}  


\newcommand{\SL}{\operatorname{SL}}

\newcommand{\PGL}{\operatorname{PGL}}


\DeclareMathOperator{\DCoh}{DCoh}

\DeclareMathOperator{\Perf}{Perf}
\DeclareMathOperator{\QC}{QC}
\newcommand{\QCsh}{\sh{\QC}}

\DeclareMathOperator{\Loc}{Loc}

\newcommand{\Phish}{\sh{\Phi}}

\DeclareMathOperator{\tr}{tr}
\DeclareMathOperator{\im}{im}

\DeclareMathOperator{\supp}{supp}

\DeclareMathOperator{\Aut}{Aut}

\DeclareMathOperator{\End}{End}

\DeclareMathOperator{\Hom}{Hom}
\DeclareMathOperator{\Map}{Map}

\DeclareMathOperator{\RGamma}{R\Gamma}

\DeclareMathOperator{\Sym}{Sym}

\DeclareMathOperator{\RHom}{RHom}


\DeclareMathOperator{\Spec}{Spec}


\DeclareMathOperator{\Fun}{Fun}

\DeclareMathOperator{\id}{id}

\DeclareMathOperator{\Ind}{Ind}

\DeclareMathOperator{\Tot}{Tot}

\newcommand{\dgcatidm}{\dgcat^{\mkern-2mu\scriptstyle{\mathrm{idm}}}}
\newcommand{\dgcatbig}{\dgcat^{\mkern-2mu\scriptstyle{\infty}}}
\newcommand{\dgcat}{\mathbf{dgcat}}

\newcommand{\Alg}{\mathbf{Alg}}

\newcommand{\adjunct}[2]{\xymatrix@1{ #1 \ar@<.7ex>[r] & \ar@<.7ex>[l] #2 }}

\newcommand{\ssetr}[2]{\xymatrix@1{ #1 \ar[r] & \ar@<.7ex>[l] #2 \ar@<-.7ex>[l] \ar@<.7ex>[r] \ar@<-.7ex>[r] & \cdots \ar@<1.4ex>[l] \ar@<-1.4ex>[l] \ar[l] }}

\newcommand{\ssetlar}[4]{\xymatrix@1{  #1 \ar@<.7ex>[r]^-{#2} \ar@<-.7ex>[r]_-{#3} & #4}}

\newcommand{\ssetl}[2]{\xymatrix@1{ \cdots \ar@<1.4ex>[r] \ar@<-1.4ex>[r] \ar[r] & #2 \ar@<.7ex>[r] \ar@<-.7ex>[r] \ar@<.7ex>[l] \ar@<-.7ex>[l] & #1 \ar[l]}}

\newcommand{\ssetrr}[3]{\xymatrix@1{ #1 \ar[r] & #2 \ar@<.7ex>[l] \ar@<-.7ex>[l] \ar@<.7ex>[r] \ar@<-.7ex>[r] & #3 \ar@<1.4ex>[l] \ar@<-1.4ex>[l] \ar[l]  \ar@<1.4ex>[r] \ar@<-1.4ex>[r] \ar[r] & \cdots\ar@<.7ex>[l] \ar@<-.7ex>[l] \ar@<2.1ex>[l] \ar@<-2.1ex>[l]}}

\newcommand{\ssetll}[3]{\xymatrix@1{ \cdots\ar@<.7ex>[r] \ar@<-.7ex>[r] \ar@<2.1ex>[r] \ar@<-2.1ex>[r] & #3 \ar@<1.4ex>[r] \ar@<-1.4ex>[r] \ar[r]  \ar@<1.4ex>[l] \ar@<-1.4ex>[l] \ar[l] & #2 \ar@<.7ex>[r] \ar@<-.7ex>[r] \ar@<.7ex>[l] \ar@<-.7ex>[l] & #1 \ar[l]}}

\usepackage{xparse}

\renewcommand{\mod}{\text{\rm{-mod}}}

\pagestyle{fancy}
\lhead{\textcolor{red}{}} 
\chead{A spectral incarnation of  affine character sheaves} 
\rhead{}
\lfoot{} \cfoot{\thepage} \rfoot{}
\headsep .05in

\newcommand{\SuppSp}{T^{*-1}}

\DeclareMathOperator{\Con}{Con}
\DeclareMathOperator{\ad}{ad}

\newcommand{\proper}{\mathit{prop}}

\newcommand{\oo}{\infty}
\newcommand{\Center}{\mathit{Z}}
\newcommand{\Trace}{\mathit{Tr}}

\newcommand{\St}{\mathit{St}}
\newcommand{\Gv}{{G^\vee}}
\newcommand{\hh}{\mathfrak{h}}
\renewcommand{\qq}{\mathfrak{q}}
\newcommand{\affine}{\mathit{aff}}

\makeatletter
\newcommand{\Free@withargs}[1]{\mathrm{Free}^{{\mkern-2.5mu\scriptstyle{\mathrm{#1}}}}}
\newcommand{\Free@noargs}{\mathrm{Free}}
\DeclareDocumentCommand\Free { g }{%
    \IfNoValueT{#1}{\Free@noargs{}}%
    \IfNoValueF{#1}{\Free@withargs{#1}}%
}
\newcommand{\coFree@withargs}[1]{\mathrm{coFree}^{{\mkern-2.5mu\scriptstyle{\mathrm{#1}}}}}
\newcommand{\coFree@noargs}{\mathrm{coFree}}
\DeclareDocumentCommand\coFree { g }{%
    \IfNoValueT{#1}{\coFree@noargs{}}%
    \IfNoValueF{#1}{\coFree@withargs{#1}}%
}

\begin{document}

\makeatother

\title{A spectral incarnation of  affine character sheaves}

\author{David Ben-Zvi} \address{Department of Mathematics\\University
  of Texas\\Austin, TX 78712-0257} \email{benzvi@math.utexas.edu}
\author{David Nadler} \address{Department of Mathematics\\University
  of California\\Berkeley, CA 94720-3840}
\email{nadler@math.berkeley.edu}
\author{Anatoly Preygel} \address{Department of Mathematics\\University
  of California\\Berkeley, CA 94720-3840}
\email{preygel@math.berkeley.edu}

\begin{abstract}

We present  a Langlands dual realization of the putative category of affine character sheaves.
Namely, we calculate the categorical center and trace (also known as the Drinfeld center and trace, or categorical Hochschild
cohomology and homology)
of the affine Hecke category starting from its spectral presentation.
The resulting categories comprise  coherent sheaves on 
the  commuting stack
of local systems on the two-torus
satisfying prescribed support conditions,
 in particular  singular support conditions as appear in recent  advances in  the Geometric Langlands program.
The key technical tools in our arguments are: a new 
descent theory for coherent sheaves or $\D$-modules with prescribed singular support;
and the theory of integral transforms for coherent sheaves developed in the companion paper~\cite{BNP1}.
\end{abstract}

\maketitle

\tableofcontents


\section{Introduction}

Let $G$ be a complex reductive group with Langlands dual $G^\vee$. Thanks to Kazhdan-Lusztig~\cite{KL}, the affine Hecke algebra of $G^\vee$ 
admits a spectral description in terms of the $K$-group of equivariant coherent sheaves on the Steinberg variety of $G$, 
which results in a classification
of irreducible representations (the Deligne-Langlands conjecture). 
Thanks to Bezrukavnikov~\cite{roma}, the affine Hecke category similarly admits a spectral description in terms of the category of equivariant coherent sheaves on the Steinberg variety, which one might hope to apply to describe the representation theory of the affine Hecke category. 
The main results of this paper are the calculation of the categorical center and trace of the affine Hecke category starting from this spectral presentation. The resulting categories comprise coherent sheaves on 
the  commuting stack, 
the derived stack of $G$-local systems on the two-torus,
satisfying prescribed support conditions,
 in particular singular support conditions as appear in recent advances in the Geometric Langlands program~\cite{ag}. 
  
 It is known that the categorical center~\cite{BFO,character} and trace~\cite{character}
 of the finite Hecke category are equivalent to Lusztig's character sheaves.
Thus one can view the results of this paper as giving a spectral construction of the putative category of affine character sheaves as the 
geometric Langlands category in genus one.
The automorphic geometry of affine character sheaves  continues to be the subject of much ongoing work motivated by 
representation theory of groups over local fields, 
with recent notable advances by Lusztig \cite{lusztig1,lusztig2} and Bezrukavnikov, Kazhdan and Varshavsky \cite{bkv} (the latter 
use the center of the affine Hecke category as a model for affine character sheaves). 
It is the natural home to a huge wealth of enumerative questions in representation theory and gauge theory (see for example \cite{SV11,SV12,ell}).

Independently of specific applications, our proofs develop new descent techniques of broad applicability to coherent sheaves in derived algebraic geometry and $\D$-modules in microlocal geometry.
In this introduction, we first explain the general techniques and then their specific
application to the affine Hecke category. We conclude with a brief further discussion of the place of this work within geometric representation theory. 

We will work throughout over an algebraically closed field $k$ of characteristic zero.
All constructions and terminology will refer to natural derived enhancements.
For example,
 we will use the term 
 category to stand for pre-triangulated $k$-linear
 dg category or stable $k$-linear $\infty$-category.

%
%
%
%
%
%

\subsection{Singular support}
To any coherent $\D$-module $\M$ on a smooth scheme $Z$, one can associate a closed conic coisotropic subvariety  $\mu(\M) \subset T^*Z$ called the singular support of $\M$. The intersection of the singular support with the zero-section is the traditional support of $\M$,
and $\M$ is a vector bundle with flat connection if and only if the singular support lies in the zero-section.
In general, the singular support  records those codirections in which the propagation of sections of $\M$ are obstructed.
In more traditional language, if one thinks of $\M$ as a generalized system of linear PDE, then the singular support comprises the wavefronts of distributional solutions.

Important categories of $\D$-modules are cut out by singular support conditions: holonomic $\D$-modules are those whose singular support is of minimal dimension and hence Lagrangian; Lusztig's character sheaves are adjoint-equivariant $\D$-modules
on a reductive group with nilpotent singular support. As is familiar with linear PDE, many aspects of $\D$-modules, such as their classifications and functoriality, are best understood by viewing them  microlocally via singular support. 

Recent advances in the Geometric Langlands program~\cite{ag}, building upon the study of categorical support in \cite{BIK}, have brought sharpened attention to and deepened  understanding of a parallel theory of  singular support for coherent sheaves. 
We will continue by briefly highlighting some of the key ideas in this story with a further discussion to be found  in \autoref{sect: supp}.

The natural working context is now not a smooth scheme but 
 a quasi-smooth derived scheme. Recall that a derived scheme $Z$ is quasi-smooth if and only if it is a derived local complete intersection  in the sense
that it is Zariski-locally the derived zero-locus of a finite collection of polynomials.
Equivalently, a derived scheme $Z$ is quasi-smooth if and only if its cotangent complex $\LL_Z$  is perfect of tor-amplitude $[-1,0]$.
More generally,  it is possible to expand the working context to include derived stacks that are quasi-smooth in the sense that they admit a smooth atlas of quasi-smooth derived schemes.

To any quasi-smooth derived stack $Z$, with underlying classical stack $Z^{cl}$, one can attach its shifted cotangent bundle 
$$
\SuppSp_Z = \Spec_{Z^{cl}} \Sym((\LL^{-1}_Z[-1])^\vee)
$$  
  The shifted cotangent bundle $\SuppSp_Z$  is a classical stack with a natural map  $\SuppSp_Z\to Z^{cl}$ with fibers the shift into degree $0$ of the degree $-1$ cohomology of $\LL_Z$. 
 There is a  natural closed embedding $Z^{cl} \subset \SuppSp_Z$ of the zero-section which is an isomorphism over the smooth locus of $Z$.

Let $\DCoh(Z)$ denote the derived category of coherent complexes on $Z$,
and $\Perf (Z)$ the derived category of perfect complexes.
Functions on the shifted cotangent bundle $\SuppSp_Z$ naturally map to the graded center of the  homotopy category of $\DCoh(Z)$. In this way, any coherent complex $\M\in\DCoh (Z)$ has a natural singular support $\mu(\M) \subset \SuppSp_Z$ which is a closed conic subset. 
The singular support $\mu(M)$ records the failure of $\M$ to be a perfect complex:
$\mu(\M)\cap Z^{cl}$ is the traditional support of $\M$, and $\M\in \Perf (Z)$ if and only if $\mu(\M) \subset Z^{cl}$. 
 More precisely, the singular support $\mu(\M)$ measures codirections of smoothings of $Z$
 in  which $\M$ is obstructed from extending as a coherent complex (see
\autoref{lci remark}).
To any conic Zariski-closed  subset $\Lambda\subset \SuppSp_Z$, there is an intermediate small category $$\DCoh(Z)\supset \DCoh_\Lambda(Z)\supset \Perf(Z)$$ consisting of coherent
sheaves $\M$  with singular support $\mu(\M)\in \Lambda$.

Coherent complexes with interesting singular support arise from pushforward along proper but not smooth maps. 
A map of quasi-smooth stacks $f:W\to Z$ induces a correspondence
$$
\xymatrix{
\SuppSp_W & \ar[l]_-{f^*} \SuppSp_Z \times_Z W \ar[r]^-{\tilde f} & \SuppSp_Z
}$$ 
In analogy with traditional microlocal subsets, one can pushforward and pullback support conditions by taking them across the correspondence. In particular, one can measure the singularities of the map via its characteristic locus, 
 the closed conic subset 
of covectors that pull back to the zero-section
$$
\Lambda_{f} = \tilde f(W \times_{\SuppSp_{W}}  (\SuppSp_{Z} \times_{Z} W))\subset \SuppSp_Z
$$
Assuming $f$ is proper, pushing forward perfect complexes along $f$ produces coherent complexes with singularities in $\Lambda_f \subset \SuppSp_Z$.

The appearance of singular support in this paper will result from studying descent along proper but not smooth maps.
A main technical tool will be a new descent theory for coherent complexes with prescribed singular support.
The arguments apply equally well in the more familiar setting of $\D$-modules and provide a new microlocal descent theory there
as well.


\subsection{Convolution categories}
Next we  introduce the general formalism of convolution categories of coherent complexes. We then state our main results about the calculations of their monoidal centers and traces.

Let $p:X\to Y$ be a proper map of derived stacks. We will ultimately apply this to simple concrete examples, but in general 
assume that $X, Y$ are reasonable 
(derived Artin stacks over $k$ of finite-presentation, geometric, and perfect in the sense of \cite{BFN})
and that $X$ is smooth.

In \cite{BNP1}, we prove general representability results for functors between
categories of coherent sheaves as integral transforms with coherent kernels.
In our present setting, we find that the integral transform construction provides a canonical equivalence
  $$
  \xymatrix{
\Phi:  \DCoh(X \times_Y X) \ar[r]^-\sim &   \Fun^{ex}_{\Perf (Y)}(\DCoh X  , \DCoh X)
&
\Phi_\K( \F ) = p_{Y*}(p_X^*(\F) \otimes \K)
}  $$
Here the functor category consists of exact $\Perf (Y)$-linear functors where $\Perf(Y)$ is monoidal and $\DCoh(X)$ is a module with respect to tensor product.
Note that the functor category is naturally  monoidal with respect to composition of functors and has a natural module $\DCoh (X)$.

Since $X$ is smooth, the diagonal $\Delta:X\to X\times X$ has finite tor-dimension, so that 
convolution  equips the category of integral kernels $  \DCoh(X \times_Y X)$
with a natural monoidal structure
$$
\xymatrix{
(X\times_Y X) \times (X \times_Y X) & \ar[l]_-{\delta_{23}} X\times_Y X \times_Y X \ar[r]^-{\pi_{13}} & X\times_Y X&
\F_1 * \F_2 = \pi_{13*}\delta_{23}^*(\F_1 \boxtimes \F_2)
}
$$
Moreover, convolution also  makes $\DCoh( X)$ into a natural $\DCoh(X \times_Y X)$-module 
$$
\xymatrix{
X\times (X\times_Y X)  & \ar[l]_-{\delta_{12}} X\times_Y X  \ar[r]^-{\pi_{3}} & X&
\M * \F = \pi_{3*}\delta_{12}^*(\M \boxtimes \F)
}
$$

We have the following basic compatibility.

\begin{prop} \label{intro prop monoid}

The integral transform construction
is naturally a monoidal equivalence
  $$
  \xymatrix{
  \Phi : \DCoh(X \times_Y X) \ar[r]^-\sim &   \Fun^{ex}_{\Perf (Y)}(\DCoh( X) , \DCoh(X))
}  $$
compatible with actions on the module $\DCoh( X)$.
\end{prop}

\begin{remark}
One can also equip   $\DCoh(X_1 \times_Y X_2)$ with the alternative $!$-convolution structure
$\F_1 *^! \F_2 = \pi_{13*}\delta_{23}^!(\F_1 \boxtimes \F_2)$. But tensoring with the pullback $p_2^*\omega_X$ of the dualizing complex intertwines the two monoidal structures.
Likewise, tensoring with $\omega_X$ intertwines the two module structures on $\DCoh( X)$.
In particular, if $X$ is Calabi-Yau, the two monoidal structures coincide.
\end{remark}

A natural challenge in geometric representation theory is to understand the module theory of the convolution category $\DCoh(X \times_Y X)$. It provides a highly structured version of the module theory of the affine Hecke algebra.
In this paper, we will take the initial fundamental step and calculate its monoidal center and trace.

\begin{defn}
Let $\A$ be an algebra object in a symmetric monoidal $\oo$-category $\C$.

(1)  The center (or Hochschild cohomology) is the morphism of bimodules object
$$
\Center(\A) = \Hom_{\A^{op} \otimes \A}(\A, \A)\in \C
$$
It comes with a natural $E_2$-monoidal structure and universal central map $\Center(\A) \to \A$.

(2) The trace (or Hochschild homology) is the tensor of bimodules object
$$
\Trace(\A) = \A \otimes_{\A^{op} \otimes \A}\A \in \C
$$
It comes with a natural $S^1$-action and universal trace map $\A\to \Trace(\A)$.

\end{defn}

\begin{remark}
We refer the reader to \cite{LurieHA}*{6.1,5.3} for the $E_2$-structure on the center (Deligne conjecture) and $S^1$-action on the trace
(cyclic structure).
\end{remark}

\begin{remark}
We will apply the above definitions to $\DCoh(X \times_Y X)$ considered as an algebra object in small stable categories.
One could also pass to large categories and consider the cocompletion of ind-coherent sheaves $\QC^!(X\times_Y X) = \Ind \DCoh(X\times_Y X)$. 
The center is sensitive to the difference in context, while the trace is not: the trace of the cocompletion is canonically equivalent to the cocompletion of the trace.
\end{remark}

The geometric avatar (or roughly speaking, $E_\oo$-version) of the above definitions is the loop space.

\begin{defn}
The loop space of a derived stack $Y$ is the derived mapping stack
$$
LY = \Map(S^1, Y) \simeq Y \times_{Y \times Y} Y
$$
\end{defn}

\begin{example}
For $G$ a group and $Y=BG$ the classifying stack, we have $LY\simeq G/G$ the adjoint quotient.
\end{example}

The geometric avatar of the universal central map and trace map is the correspondence
$$
\xymatrix{
X\times_Y X & \ar[l]_-{\delta} (X\times_Y X) \times_{X\times X} X \simeq X \times_{Y\times X} X  \ar[r]^-{\pi} & Y \times_{Y \times Y} Y \simeq LY
}
$$ 
It corresponds to the cobordism with corners (the ``whistle diagram" in topological field theory) between an interval with marked boundary 
and the circle. 

\begin{example}
For $H\subset G$ a subgroup, and the natural map $X=BH \to Y=BG$,  the correspondence becomes
$$
\xymatrix{
H\backslash G/ H & \ar[l] G/H  \ar[r] & G/G
}
$$ 
where the latter two terms are adjoint quotients.
 \end{example}

It is shown in \cite{BFN} that the resulting transforms $\pi_* \delta^*$ and $\delta_*\pi^*$  on quasicoherent sheaves induce respective equivalences
$$
\xymatrix{
 \Trace(\QC(X \times_Y X)) \ar[r]^-\sim &  \QC(LY) \ar[r]^-\sim & \Center(\QC(X \times_Y X))
}
$$
One can view our main results as a  refinement for coherent sheaves in the presence of singularities. Let us first discuss the center where we need only impose traditional support conditions. 

\begin{defn}
Let $\DCoh_{\proper/Y}(LY) \subset \DCoh(LY)$ denote the full subcategory of coherent sheaves that are proper over $Y$ in the sense that their pushforward to $Y$ is coherent.
\end{defn}

An initial justification for the above definition is the fact that
the functor 
 $\delta_*\pi^*:\QC(LY) \to \QC(X \times_Y X)$
naturally restricts to a functor $\delta_*\pi^*:\DCoh_{\proper/Y}(LY) \to \DCoh(X \times_Y X)$.

The following theorem is our first main result. Its proof appeals to a substantial part of the theory of integral transforms for coherent sheaves developed
in the companion paper~\cite{BNP1}.

\begin{theorem}\label{intro thm center}
Suppose $p:X\to Y$ is a proper, surjective map  of  derived stacks with $X, Y$ smooth. 


Then the functor $\delta_*\pi^*:\DCoh_{\proper/Y}(Y)\to \DCoh(X \times_Y X)$ is the universal central map underlying  a canonical equivalence of $E_2$-monoidal categories
$$
\xymatrix{
\DCoh_{\proper/Y}(LY) \ar[r]^-\sim &  \Center(\DCoh(X \times_Y X)) 
}
$$
\end{theorem}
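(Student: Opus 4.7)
The plan is to upgrade the integral-transform identification of Proposition~\ref{intro prop monoid} to a description of bimodule categories (following \cite{BNP1}), then to compute the center by an $\RHom$ that collapses to coherent sheaves on a self-intersection, and finally to identify this self-intersection with a full subcategory of $\DCoh(LY)$ via descent along a proper-surjective base change of $p$.

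First, using the integral-transform machinery of \cite{BNP1}, I would describe bimodules over $\A := \DCoh(X \times_Y X)$. By Proposition~\ref{intro prop monoid} one has $\A \simeq \Fun^{ex}_{\Perf(Y)}(\DCoh(X), \DCoh(X))$, and an $\A$-bimodule is equivalently a $\Perf(Y)^{\otimes 2}$-linear endofunctor of $\DCoh(X)$; such endofunctors are represented by coherent kernels on $X \times_{Y \times Y} X$ (using $(p,p): X \to Y \times Y$), with the regular bimodule $\A$ corresponding to the pushforward of $\O_{X \times_Y X}$ along the closed immersion $X \times_Y X \hookrightarrow X \times_{Y \times Y} X$. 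Computing $\Center(\A) = \RHom_{\A \otimes \A^{op}}(\A, \A)$ by adjunction along this closed immersion collapses the $\RHom$ onto the self-intersection
$$
(X \times_Y X) \times_{X \times_{Y \times Y} X} (X \times_Y X) \simeq X \times_{Y \times X} X \simeq X \times_Y LY.
$$
The resulting identification places $\Center(\A)$ inside $\DCoh(X \times_Y LY)$, and the correspondence $X \times_Y X \xleftarrow{\delta} X \times_Y LY \xrightarrow{\pi} LY$ displays the universal central map tautologically as $\delta_* \pi^*$.

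Next, I would identify this full subcategory of $\DCoh(X \times_Y LY)$ with $\DCoh_{\proper/Y}(LY)$ via pullback along $\pi$. Since $\pi$ is the base change of $p$ along the basepoint map $LY \to Y$, it is proper and surjective. The condition $\proper/Y$ is precisely the ordinary-support constraint that makes $\pi^* \F$ land in the subcategory whose $\delta_*$-pushforward is coherent on $X \times_Y X$, and conversely forces every bimodule-central element to descend through $\pi$. Granted the equivalence at the level of underlying stable $\infty$-categories, the $E_2$-monoidal structure is transported automatically by functoriality: the pair-of-pants cobordism encoding the $E_2$-structure on the center is realized geometrically by the above correspondence together with the usual composition-of-loops structure on $LY$, and the universal central map is $\delta_*\pi^*$ by construction.

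The principal obstacle is this last descent statement. Classical descent technology (faithfully flat, smooth) does not apply because $p$ is only proper-surjective, $LY$ and $X \times_Y LY$ are typically only quasi-smooth rather than smooth, and $\pi^*$ does not a priori preserve coherence. Here I would invoke the singular-support-sensitive descent theory highlighted in the introduction: proper-surjective descent for coherent sheaves holds once singular-support conditions on both sides are correctly tracked, and the geometric input to verify is that the characteristic locus of $\pi$ forces no nontrivial singular-support restriction downstairs, so that $\proper/Y$ really is the only constraint that appears. This is the step at which the new descent theory advertised in the introduction does its essential work; once it is in place, the equivalence and its $E_2$-monoidal enhancement follow from the integral-transform dictionary already established.
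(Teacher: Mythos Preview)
Your approach diverges from the paper's, and the final step invokes the wrong tool.

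The paper's proof (Theorem~\ref{thm:center-matrix}) does not use the singular-support descent machinery at all for the center; that machinery is reserved for the trace. Instead, the center is computed via the relative bar resolution for the monoidal map $\Delta_*\colon \Perf(X)\to \A=\DCoh(X\times_Y X)$, expressing $\Center(\A)$ as the totalization of a cosimplicial category
\[
c^\bullet \;\simeq\; \Fun^{ex}_{\Perf(X^2)}\bigl(\DCoh(X\times_Y X)^{\otimes_{\Perf X}\bullet},\,\DCoh(X\times_Y X)\bigr).
\]
The parallel cosimplicial diagram $\C^\bullet$ built from $\QC$ is known to totalize to $\QC(LY)$. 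The key input from \cite{BNP1} is that the comparison $c^\bullet\hookrightarrow\C^\bullet$ is levelwise fully faithful, so $\Tot c^\bullet\hookrightarrow\QC(LY)$ is fully faithful; one then identifies the essential image as those $\F\in\QC(LY)$ with $\delta_*\pi^*\F$ coherent, and a short elementary argument about supports and the properness of $p$ shows this is exactly $\DCoh_{\proper/Y}(LY)$. No descent for $\DCoh$ is performed directly.

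Your proposal has two problems. First, the identification of $\A$-bimodules with coherent kernels on $X\times_{Y\times Y}X$ is not justified: an $\A$-bimodule here is an $\A^{op}\otimes\A$-module \emph{category}, and reducing this to a single sheaf category requires a Morita-type statement you have not supplied (note that $\DCoh(X)$ is not naturally a $\Perf(Y\times Y)$-module in the way your phrasing suggests). The ``self-intersection'' you write down is in effect only the degree-zero term $c^0$ of the bar complex, not the full totalization. Second, and more seriously, you assert that singular-support descent along $\pi$ will impose ``no nontrivial singular-support restriction downstairs,'' leaving only the $\proper/Y$ condition. But compare with the trace (Theorem~\ref{intro thm trace}): the very same correspondence appears there, and the descent \emph{does} impose the nontrivial singular-support condition $\Lambda_{X/Y}$. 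The reason the center involves only the ordinary support condition $\proper/Y$ is not that the singular geometry of $\pi$ is tame---it is that the paper bypasses $\DCoh$-descent entirely by embedding into the $\QC$ calculation, where the totalization is already known, and then reads off the essential image.
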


Now let us turn to the trace where we will need to consider singular support conditions.
 Define the support condition $\Lambda_{X/Y}\subset \SuppSp_{LY}$ 
 (see \autoref{sect: supp} for a precise discussion.)
to be the the pull-push of support conditions
 $$
 \Lambda_{X/Y} = p_* \delta^!\SuppSp_{X\times_Y X}
 $$

%
%

\begin{defn}
Let $\DCoh_{ \Lambda_{X/Y}}(LY) \subset \DCoh(LY)$ denote the full subcategory of coherent complexes whose microlocal 
support lies in $ \Lambda_{X/Y}\subset  \SuppSp_{LY}$.
\end{defn}

An initial justification for the above definition is the fact that
the functor 
 $\pi_*\delta^*: \QC(X \times_Y X)\to \QC(LY)$
naturally restricts to a functor $\pi_*\delta^*: \DCoh(X \times_Y X)\to \DCoh_{ \Lambda_{X/Y}}(LY) $.

The following theorem is our second main result. Its proof appeals to the microlocal descent theory developed in this paper and outlined below in the next section of the introduction.

\begin{theorem}\label{intro thm trace}
Suppose $p:X\to Y$ is a proper, surjective and quasi-smooth map of derived stacks with $X, Y$ smooth.

Then the functor $\pi_*\delta^*: \DCoh(X\times_Y X)\to \DCoh_{ \Lambda_{X/Y}}(LY)$ is the universal trace map underlying a canonical equivalence of $S^1$-categories
$$
\xymatrix{
 \Trace(\DCoh(X \times_Y X))   \ar[r]^-\sim & \DCoh_{ \Lambda_{X/Y}}(LY)
}
$$
\end{theorem}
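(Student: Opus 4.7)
The plan is to present the trace via a cyclic bar construction whose geometric realization computes $LY$, and then to identify the result with $\DCoh_{\Lambda_{X/Y}}(LY)$ by applying the singular-support descent theory developed earlier in the paper. First, I would use \autoref{intro prop monoid} to identify $\DCoh(X\times_Y X)$ with the endomorphism algebra $\Fun^{ex}_{\Perf(Y)}(\DCoh X,\DCoh X)$; since $p$ is proper with $X,Y$ smooth, $\DCoh(X)=\Perf(X)$ is dualizable over $\Perf(Y)$ (with Serre-type self-duality). The abstract trace formula for an endomorphism algebra of a dualizable module then presents
$$
\Trace(\DCoh(X\times_Y X)) \;\simeq\; \bigl|\, [n]\mapsto \DCoh\bigl(X^{\times_Y(n+1)}\bigr) \,\bigr|,
$$
a geometric realization of the cyclic bar simplicial category with face maps induced by convolution along adjacent relative diagonals. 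The underlying cyclic simplicial derived stack $[n]\mapsto X^{\times_Y(n+1)}$ has geometric realization $LY$; this is exactly the input driving the BFN equivalence $\Trace(\QC(X\times_Y X))\simeq\QC(LY)$ via classical proper descent for $\QC$ \cite{BFN}.

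Second, the universal trace functor is the augmentation of the cyclic bar down to simplicial level zero, which at the geometric level is the correspondence $X\times_Y X \xleftarrow{\delta} X \xrightarrow{\pi} LY$, giving $\pi_*\delta^*\colon \DCoh(X\times_Y X) \to \DCoh(LY)$. To verify that the image lies in $\DCoh_{\Lambda_{X/Y}}(LY)$, I would apply the standard microlocal pull-push estimate: for $\F\in\DCoh(X\times_Y X)$ the singular support of $\pi_*\delta^*\F$ is contained in the pull-push of $\SuppSp_{X\times_Y X}$ along the correspondence, which by the defining formula $\Lambda_{X/Y} = p_*\delta^!\SuppSp_{X\times_Y X}$ is exactly $\Lambda_{X/Y}$. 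The $S^1$-equivariant structure needed for a trace map is inherited from the geometric cyclicity of $LY=\Map(S^1,Y)$ and matches the cyclic structure of the bar complex.

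The main content is to show that $\pi_*\delta^*$ realizes the universal trace, i.e., that the comparison
$$
\bigl|\, [n]\mapsto \DCoh\bigl(X^{\times_Y(n+1)}\bigr) \,\bigr| \longrightarrow \DCoh_{\Lambda_{X/Y}}(LY)
$$
is an equivalence. After ind-completion the large version recovers the BFN equivalence, so the comparison is fully faithful at the $\QC$ level; what remains is to pin down the essential image as exactly $\DCoh_{\Lambda_{X/Y}}(LY)$. For this I would invoke the new singular-support descent theory: the augmented simplicial derived stack $X^{\times_Y(\bullet+1)}\to LY$ should underlie a comonadic descent datum whose descended category is exactly $\DCoh_{\Lambda_{X/Y}}(LY)$, with the resulting simplicial comonad matching the cyclic bar above. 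The principal obstacle is precisely this descent step: for $\QC$ it is classical proper descent, but for $\DCoh$ the face-map pushforwards along the quasi-smooth maps in the diagram do not preserve perfectness — they land in a strictly larger category cut out by a singular-support condition, and the category of effective descent is $\DCoh_{\Lambda_{X/Y}}(LY)$ rather than $\Perf(LY)$ or all of $\DCoh(LY)$. Verifying this requires controlling the cotangent complexes of the iterated fiber products $X^{\times_Y(n+1)}$ in terms of $\LL_{X/Y}$ so that $\Lambda_{X/Y}$ is preserved under all simplicial structure maps, and establishing conservativity and Barr--Beck for pullback on $\DCoh_{\Lambda_{X/Y}}(LY)$; both steps use the quasi-smoothness and properness of $p$ essentially and together form the technical heart of the proof, supplied by the singular-support descent theory of this paper.
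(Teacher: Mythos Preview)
Your overall strategy --- bar resolution plus singular-support descent --- matches the paper's, but three concrete identifications are off in ways that would block the argument.

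First, $\Perf X$ is \emph{not} dualizable over $\Perf Y$: that requires $p$ smooth and proper, whereas here $p$ is only quasi-smooth. The paper does not invoke any such dualizability. Instead it uses the monoidal map $\Delta_*\colon \B=\Perf X \to \A=\DCoh(X\times_Y X)$ to regard $\A$ as an algebra in $\B$-bimodules and computes the trace via the \emph{relative} bar resolution $\Trace(\A)\simeq\bigl|\A^{\otimes_\B(\bullet+1)}\otimes_{\B\otimes\B}\B\bigr|$.

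Second, the simplicial stacks are misidentified. The term $\A^{\otimes_\B(n+1)}\otimes_{\B\otimes\B}\B$ lives on $Z_n = X^{\times_Y(n+2)}\times_{X^2}X \simeq X^{\times_Y(n+1)}\times_Y LY$, not on $X^{\times_Y(n+1)}$: the cyclic wrap-around is precisely what produces the loop factor. The augmented diagram $Z_\bullet\to LY$ is then the \v{C}ech nerve of $X\times_Y LY\to LY$ (a base change of $p$), not the \v{C}ech nerve of $X\to Y$, which augments to $Y$. Relatedly, the fundamental correspondence has middle term $X\times_Y LY$, not $X$; there is no natural map $X\to LY$.

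Third --- and this is the real gap --- the bar terms are not $\DCoh(Z_n)$ but $\DCoh_{\Lambda_n}(Z_n)$ for explicit support conditions $\Lambda_n = q_n^!\SuppSp_{W_n}$, where $q_n\colon Z_n\to W_n=(X\times_Y X)^{n+1}$ ``breaks the necklace''; this is how \autoref{prop: tensor product} identifies relative tensor products of $\DCoh$ categories. The technical heart of the proof is verifying that the resulting diagram of pairs $(Z_\bullet,\Lambda_\bullet)\to(LY,\Lambda_{X/Y})$ satisfies the strictness hypotheses of \autoref{prop:descent-generic}; this is \autoref{prop:strict-check}, carried out by an explicit fiberwise linear-algebra computation of each $\Lambda_n$ in terms of $dp^*$. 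Your final paragraph frames the needed control only in terms of preserving $\Lambda_{X/Y}$ on $LY$, but the actual work is one level up: tracking the distinct $\Lambda_n$'s on each $Z_n$ through all the Beck--Chevalley squares and checking the strict-Cartesian condition there.
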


\begin{remark}
As mentioned earlier,
the trace is not sensitive to whether we pass to cocomplete categories: the trace of the cocompletion is canonically  equivalent to the cocompletion of the trace. Thus the above theorem also implies the equivalence for ind-coherent sheaves
$$
\xymatrix{
 \Trace(\QC^!(X \times_Y X))   \ar[r]^-\sim & \QC^!_{ \Lambda_{X/Y}}(LY) = \Ind \DCoh_{ \Lambda_{X/Y}}(LY)
}
$$
\end{remark}


\subsection{Base-change and descent with support}
Before continuing to applications, let us highlight the microlocal descent theory developed in \autoref{sect: supp} that contributes to 
the proof of \autoref{intro thm trace}.
It is of independent interest and has broader applicability to $\D$-modules as well as coherent sheaves.

When working with fixed support conditions, natural functors on coherent sheaves need not respect the prescribed support conditions. For example, recall that perfect complexes are precisely coherent complexes with singular support in the zero-section. In general, pushforward of perfect complexes  along a proper map takes perfect complexes to coherent complexes that are not perfect. If we insist on working
with perfect complexes, then we must ``correct"  pushforwards so their singular support lies in the zero-section. One fallout is that standard identities such as base-change need not hold for such modified functors.

In \autoref{sect: supp}, we introduce general geometric situations where base-change holds for functors with prescribed support conditions.  This is a key step in  establishing a general descent pattern (with respect to both pullback and pushforward) for coherent sheaves with prescribed support.
A natural framework for such results is the geometry of pairs $(X, \Lambda)$ of a quasi-smooth derived stack $X$ and a conic Zariski-closed subset $\Lambda\subset \SuppSp_X$.
 Morphisms are given by  maps whose induced microlocal  correspondences take the support condition of the domain to that of the target. To understand descent, we first derived a general form of base-change with prescribed support.

\begin{defn} A {\em strict Cartesian diagram of pairs} is a Cartesian diagram of quasi-smooth derived stacks which is also a commutative diagram of maps of pairs
\[ \xymatrix{
  (Z , \Lambda_Z) \ar[r]^-{p_2} \ar[d]_{p_1} &  (X', \Lambda_{X'}) \ar[d]^{q} \\
  (X, \Lambda_X) \ar[r]_{p} & (Y, \Lambda_Y) } \]
Furthermore, the pullbacks of support conditions should satisfy the strictness condition
$$\Lambda_Z \supset p_1^! \Lambda_X \cap p_2^! \Lambda_{X'}$$
\end{defn}

\begin{remark}
Let us mention in a simple traditional setting the meaning of a map of pairs and what kind of notion strictness is. Take $f:X\to Y$ a smooth map of smooth manifolds,  and consider the associated Lagrangian correspondence
$$
\xymatrix{
T^*X & \ar[l]_-{f^*} T^*_Y \times_Y X \ar[r]^-{\tilde f} & T^*Y
}$$ 
Fix support conditions $\Lambda_X \subset T^*X, \Lambda_Y \subset T^*Y$. Then  $f$ is a map of traditional pairs if 
the correspondence takes the support condition of the domain to that of the target:
$f_*(\Lambda_X)  = \tilde f((f^*)^{-1}(\Lambda_X)) \subset \Lambda_Y$. If $f$ is a fibration, then  $f$ is  a strict map of traditional pairs if 
the same additionally holds in the opposite direction:
$f^!(\Lambda_Y) = f^*(\tilde f^{-1}(\Lambda_Y)) \subset \Lambda_X$.
\end{remark}

In \autoref{prop:adjt-supt-R} and \autoref{prop:adjt-supt-L}, we prove that for strict Cartesian diagrams
of pairs with suitable properness and quasi-smoothness assumptions, both dual forms of base-change identities hold.
These base-change identities allow us to prove descent theorems for both pullbacks and pushforwards by applying the Beck-Chevalley 
Condition~\cite{LurieHA}*{Corollary~6.2.4.3}.
Given an augmented simplicial diagram $f \colon (X_\bullet, \Lambda_\bullet) \to (X_{-1}, \Lambda_{-1})$ of maps of pairs,
we refer to the induced diagrams
 \[
\xymatrix{
(X_{n+1},\Lambda_{n+1}) \ar[d]_-{\tilde g} \ar[r]^-{d_0}& (X_{n},\Lambda_{n}) \ar[d]^-{g}  \\
(X_{m+1}, \Lambda_{m+1})   \ar[r]^-{d_0}& (X_{m},\Lambda_{m}) } \]
as the Beck-Chevalley squares. We then prove the following in \autoref{prop:descent-generic}.

\begin{theorem} Suppose $f \colon (X_\bullet, \Lambda_\bullet) \to (X_{-1}, \Lambda_{-1})$ is an augmented simplicial diagram of maps of  pairs  with all stacks quasi-smooth and maps proper.  Suppose further that:
  \begin{enumerate}
      \item The face maps are quasi-smooth.
       \item All Beck-Chevalley squares are strict Cartesian diagrams of pairs.      
      \item $f_* \Lambda_{0} = \Lambda_{-1}.$ 
  \end{enumerate}
 
  Then the augmentation  provides an equivalence with the totalization of the cosimplicial category furnished by pullbacks
   with support conditions:
  \[ 
  \xymatrix{
  \QCsh_{\Lambda_{-1}}(X_{-1}) \ar[r]^-{\sim} &  \Tot\{ \QCsh_{\Lambda_\bullet}(X_\bullet), \ff_\bullet^! \} 
    }\]
  
  If in addition each of the $\QCsh_{\Lambda_k}(X_k)$ is compactly-generated for $k \geq 0$, then the same is true for $k = -1$, and 
  the augmentation provides an equivalence with the geometric realization of the simplicial  category furnished by pushforwards:
  \[ \xymatrix{ \DCoh_{\Lambda_{-1}}(X_{-1}) & \ar[l]_-{\sim}   \left| \DCoh_{\Lambda_\bullet}(X_\bullet), \ff_{\bullet*} \right| } \]
\end{theorem}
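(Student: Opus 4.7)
My plan is to deduce both equivalences from Lurie's Barr--Beck-type descent criterion \cite{LurieHA}*{Cor.\ 6.2.4.3}, applied to the adjunction $(f_{0*}, f_0^!)$ between $\QCsh_{\Lambda_0}(X_0)$ and $\QCsh_{\Lambda_{-1}}(X_{-1})$ coming from the augmentation, with the higher simplices organized into a genuine cosimplicial descent datum by the base-change-with-support propositions of \autoref{sect: supp}. The two hypotheses of Lurie's theorem to be verified are (a) base-change identities for the Beck--Chevalley squares, so that the cosimplicial diagram of functors assembles correctly; and (b) conservativity of the functor $f_0^!$ on $\QCsh$ with prescribed support.

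For (a), hypotheses~(1) and (2) of the theorem --- quasi-smooth face maps and strict Cartesian Beck--Chevalley squares of pairs --- are exactly the input of \autoref{prop:adjt-supt-R} and \autoref{prop:adjt-supt-L}, which produce natural base-change equivalences for both $!$-pullback and $*$-pushforward with prescribed support. Consequently $f_\bullet^!$ assembles into an honest cosimplicial diagram of functors with values in the $\QCsh_{\Lambda_\bullet}(X_\bullet)$, compatible with its simplicial variant built from $f_{\bullet*}$ by passage to left adjoints.

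For (b), absent support conditions, conservativity of $f_0^!$ is immediate from properness and surjectivity of $f_0$; with supports, one needs to check that $f_0^! \F = 0$ forces $\F = 0$ for $\F \in \QCsh_{\Lambda_{-1}}(X_{-1})$. Here hypothesis~(3), $f_{0*} \Lambda_0 = \Lambda_{-1}$, plays its essential role: it guarantees that the microlocal correspondence $\SuppSp_{X_{-1}} \leftarrow \SuppSp_{X_{-1}} \times_{X_{-1}} X_0 \to \SuppSp_{X_0}$ is surjective onto $\Lambda_{-1}$ after push-pull, so that together with surjectivity on underlying stacks no nonzero microlocal obstruction survives. This conservativity step, which marries classical support detection with the new singular-support input, is the key novelty and I expect it to be the main obstacle --- the full argument must exploit both the projection formula for $\QCsh_{\Lambda}$ and the geometry of the support pushforward established in the earlier sections.

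Given (a) and (b), Lurie's theorem yields the first equivalence $\QCsh_{\Lambda_{-1}}(X_{-1}) \simeq \Tot \{\QCsh_{\Lambda_\bullet}(X_\bullet), f_\bullet^!\}$. For the second (the $\DCoh$ geometric realization), I pass to left adjoints in this cosimplicial presentation: properness of the face maps plus the map-of-pairs hypothesis ensures that each $f_{\bullet*}$ preserves $\DCoh$ with the correct singular-support bound, and under the assumption that each $\QCsh_{\Lambda_k}(X_k)$ is compactly generated for $k \geq 0$, the totalization of the previous step dualizes to a geometric realization of the small subcategories $\DCoh_{\Lambda_\bullet}(X_\bullet)$ along $f_{\bullet*}$. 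Compact generation of $\QCsh_{\Lambda_{-1}}(X_{-1})$, and therefore the identification with the asserted colimit, then follows formally from the preservation of compact generation under such dualized colimit presentations in $\Pr^L_{\mathrm{st}}$.
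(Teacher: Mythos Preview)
Your proposal is correct and follows essentially the same route as the paper: apply Lurie's Beck--Chevalley criterion \cite{LurieHA}*{Cor.~6.2.4.3} to the cosimplicial diagram of $\ff_\bullet^!$, with left adjointability supplied by \autoref{prop:adjt-supt-L} applied to the strict Cartesian Beck--Chevalley squares, then dualize via compact generation for the geometric realization statement. One sharpening: the conservativity step you flag as ``the main obstacle'' is in fact immediate from \autoref{prop: functorial support}(i) (i.e., \cite{ag}*{Prop.~7.4.19}) --- since $f$ is proper and $f_*\Lambda_0 = \Lambda_{-1}$, that result says the essential image of $\ff_*$ generates $\QCsh_{\Lambda_{-1}}(X_{-1})$ under colimits, whence its right adjoint $\ff^!$ is conservative; no projection-formula argument is needed, and only \autoref{prop:adjt-supt-L} (not \autoref{prop:adjt-supt-R}) is required for Lurie's criterion.
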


The arguments in the proof of the above theorem may be equally well implemented in  the
alternative setting of $\D$-modules. Namely, with analogous geometric hypotheses, the proof holds with $\D$-modules
with prescribed singular support
substituted for coherent sheaves with prescribed singular support. As far as we know, this is a new result going beyond the descent patterns appearing in~\cite{BD} and should have broad utility.
For example, closely tied to the applications of this paper, 
it can be used to provide an alternative proof of one of the the main results of \cite{character} identifying the categorical trace
of the finite Hecke category with character sheaves. 

To state the version of the preceding theorem for $\D$-modules, we only need to change our microlocal setting back to the usual cotangent bundle.
The natural framework is now the geometry of traditional pairs $(X, \Lambda)$ of a smooth derived stack $X$ and a conic Zariski-closed subset $\Lambda\subset T^*X$.  

Let $\D_\Lambda(X)$ denote the full subcategory of the ind-completion of coherent $\D$-modules comprising objects with singular support lying in $\Lambda \subset T^*X$.

\begin{theorem} 
Suppose $f \colon (X_\bullet, \Lambda_\bullet) \to (X_{-1}, \Lambda_{-1})$ is an augmented simplicial diagram of maps of  traditional pairs with all stacks smooth and maps proper. 
Suppose further that:
  \begin{enumerate}
      \item The face maps are smooth.
       \item All Beck-Chevalley squares are strict Cartesian diagrams of pairs.
       \item $\mathfrak{f}^! \colon \D_{\Lambda_{-1}}(X_{-1}) \to \D_{\Lambda_0}(X_0)$ is conservative.      
      
  \end{enumerate}
 
  Then the augmentation provides an equivalence with the totalization of the  cosimplicial category 
  furnished by pullbacks 
   with support conditions:
  \[ 
  \xymatrix{
  \D_{\Lambda_{-1}}(X_{-1}) \ar[r]^-{\sim} &  \Tot\{ \D_{\Lambda_\bullet}(X_\bullet), \ff_\bullet^! \} 
    }\]
\end{theorem}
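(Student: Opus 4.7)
The plan is to mirror the proof of the coherent-sheaf version (the preceding \autoref{prop:descent-generic}), substituting $\D$-modules with prescribed singular support in the usual cotangent bundle for coherent sheaves with prescribed singular support in the shifted cotangent bundle. The argument reduces, via \cite{LurieHA}*{Corollary~6.2.4.3}, to verifying the Beck-Chevalley condition for the augmented cosimplicial diagram of $!$-pullback functors: the augmentation functor $\mathfrak{f}^!$ is conservative (given as hypothesis (iii)), and each Beck-Chevalley square gives rise to a left-adjointable square after restricting to support-constrained subcategories.

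First I would establish the $\D$-module analogue of \autoref{prop:adjt-supt-R} and \autoref{prop:adjt-supt-L}: for a strict Cartesian diagram of traditional pairs
\[ \xymatrix{
  (Z , \Lambda_Z) \ar[r]^-{p_2} \ar[d]_-{p_1} &  (X', \Lambda_{X'}) \ar[d]^-{q} \\
  (X, \Lambda_X) \ar[r]_-{p} & (Y, \Lambda_Y) } \]
with $p$ proper and $q$ smooth, the classical $\D$-module base-change equivalence $q^! p_* \simeq p_{2*} p_1^!$ restricts to an equivalence of the corresponding support-constrained subcategories, and the adjoint left-base-change identity likewise restricts. The underlying isomorphism of functors is standard; the new content is the assertion that both sides carry $\D_{\Lambda_X}(X)$ into $\D_{\Lambda_{X'}}(X')$. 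This is where the strictness condition $\Lambda_Z \supset p_1^! \Lambda_X \cap p_2^! \Lambda_{X'}$ enters: combined with Kashiwara's estimates for the propagation of singular support under smooth inverse image and proper direct image, strictness forces the two a~priori different support inclusions coming from the two sides of the base-change square to coincide with the prescribed $\Lambda_Z$.

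Second, I apply this microlocal base-change to each Beck-Chevalley square of the augmented simplicial diagram. By hypotheses (i) and (ii) each such square is a strict Cartesian diagram of pairs with one smooth leg (the face map $d_0$) and one proper leg (from the properness of the augmented diagram), so the previous step yields the left-adjointable squares
\[ \xymatrix{
\D_{\Lambda_{n+1}}(X_{n+1}) \ar[d]_-{\tilde g^!} \ar[r]^-{d_{0}^!} & \D_{\Lambda_n}(X_n) \ar[d]^-{g^!}  \\
\D_{\Lambda_{m+1}}(X_{m+1})   \ar[r]_-{d_0^!} & \D_{\Lambda_m}(X_m) } \]
whose horizontal functors have left adjoints $d_{0+}$ (by properness) satisfying base-change. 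Combined with conservativity of $\mathfrak{f}^!$, this is exactly the Beck-Chevalley condition of \cite{LurieHA}*{Corollary~6.2.4.3}, which then furnishes the desired equivalence $\D_{\Lambda_{-1}}(X_{-1}) \simeq \Tot\{\D_{\Lambda_\bullet}(X_\bullet), \mathfrak{f}_\bullet^!\}$.

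The main obstacle is the microlocal base-change lemma itself: the verification that the support-level statement $\Lambda_Z \supset p_1^!\Lambda_X \cap p_2^!\Lambda_{X'}$ suffices to descend the standard $\D$-module base-change isomorphism to the subcategories with support. This requires comparing the two a~priori distinct singular-support bounds produced by traversing the Cartesian square along its two paths and checking that the strictness condition is precisely the one that identifies them. Once this microlocal compatibility is in place, the descent argument is formal and parallels the coherent case word-for-word, with $T^*$ replacing $\SuppSp$ throughout.
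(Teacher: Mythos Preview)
Your proposal is correct and matches the paper's approach exactly: the paper does not give a separate proof of this statement but rather asserts that the argument for \autoref{prop:descent-generic} carries over verbatim to the $\D$-module setting, with $T^*$ replacing $\SuppSp$ and the $\D$-module analogues of \autoref{lem:push-supt} and \autoref{prop:adjt-supt-L} feeding into Lurie's Beck--Chevalley criterion. One small slip: in your displayed square of $!$-pullbacks the arrows point the wrong way (since $d_0 \colon X_{n+1} \to X_n$, the functor $d_0^!$ goes from $\D_{\Lambda_n}(X_n)$ to $\D_{\Lambda_{n+1}}(X_{n+1})$), but this does not affect the argument.
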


\begin{remark}
In the  descent theorem for coherent sheaves, we were able to give a  criterion on support  for the conservativity of pullback along the augmentation.  We also 
were able to identify compact objects and give a pushforward formulation of descent on small categories.
We are unsure if the analogous results hold for $\D$-modules in complete generality, though there are broad situations where they do.
\end{remark}


\subsection{Application to affine Hecke categories}

Let us now turn to the motivating application for the development of the preceding theory.

Let $G$ be a complex reductive group
and $B\subset G$  a Borel subgroup. Let $q:BB\to BG$ denote the natural induction map of classifying stacks.
Passing to loop spaces, we obtain
the Grothendieck-Springer map of adjoint quotients
$$
\xymatrix{
 Lq:B/B \simeq \widetilde G/G \ar[r] &  G/G
}$$
where $\widetilde G$ classifies pairs of a Borel subgroup $B'\subset G$ and a group element $g\in B'$,
and $Lq$ projects to the group element and forgets the Borel subgroup.

Now we will apply the preceding theory with $X= B/B$, $Y = G/G$, and $p = Lq$. Note that $B/B$ and $G/G$ are smooth, and $p:B/B\to G/G$ is projective. 
Note as well that our starting point already involves loop spaces, though that structure plays no role with respect to our general results.

\begin{defn}
(1) 
The 
global Steinberg stack is the fiber product
$$
\xymatrix{
\St_G = B/B \times_{G/G} B/B
}
$$

(2) The 
global affine Hecke category is the small stable monoidal category
$$
\xymatrix{
\H^\affine_G = \DCoh(\St_G)
}
$$
\end{defn}

\begin{remark}
One can interpret the loop space $L(BG) \simeq G/G$ as the moduli stack of $G$-local systems on the circle $S^1$. Similarly, one can interpret the global Steinberg stack $\St_G\simeq L(B\backslash G/B)$ as the moduli of $G$-local systems on the cylinder $S^1 \times I$ with $B$-reductions at the boundary circles $S^1 \times \partial I$.
\end{remark}

 We will state the form our general results take when applied to the affine Hecke category.

\begin{defn}
The commuting stack is the moduli of local systems on the two-torus $T = S^1 \times S^1$, or equivalently, the twice-iterated loop space
$$
\Loc_G(T) \simeq L(L (BG)) \simeq \{ (g_1, g_2) \in G\times G \, |\, g_1 g_2 g_1^{-1} g_2^{-1} = 1\}/G
$$
\end{defn}

\begin{remark} Unlike the Steinberg stack itself, the commuting stack has a nontrivial derived structure and must be treated as a derived stack. 
\end{remark}

Let $\gg$ denote the Lie algebra of $G$. The fiber of the cotangent complex of $\Loc_G(T)$ at a local system $\P$ 
can be calculated by the de Rham cochains $C^*(T, \gg^*_\P)[1]$, where $\gg^*_\P$ denotes the coadjoint 
bundle of $\P$. Focusing on the degree $-1$ term coming from the commutator equation, we see that there is a natural map 
$$
\xymatrix{
\mu:\SuppSp_{\Loc_G(T)}  \simeq C^0(T, \gg^*_\P) \ar[r] & \gg^*/G
}$$

Let   $\hh$ denote the Lie algebra of the universal Cartan of $G$, and $W$ the Weyl group. Recall the dual characteristic polynomial
map, or equivalently, the projection to the coadjoint quotient
$$
\xymatrix{
\chi:\gg^*/G\ar[r] &  \gg^*/\hspace{-0.25em}/G \simeq \hh^*/W
}
$$

Define the global nilpotent cone $\N \subset  \SuppSp_{\Loc_G(T)}$ to be the closed conic subset given by the inverse-image of zero under the composition
$$
\xymatrix{
\SuppSp_{\Loc_G(T)} \ar[r]^-\mu & \gg^*/G \ar[r]^-\chi & \hh^*/W
}$$

\begin{defn}
(1)
Let $ \DCoh_{\P}(\Loc_G(T)) \subset \DCoh(\Loc_G(T))$ denote the full subcategory of coherent sheaves whose pushforward
along the restriction map $\Loc_G(T) \to \Loc_G(S^1)$ along the first loop $S^1 \to T$ is coherent.

(2) 
 Let $\DCoh_\N(\Loc_G(T))\subset \DCoh(\Loc_G(T))$ denote the full subcategory of coherent complexes whose singular support lies in the global nilpotent cone $\N \subset  \SuppSp_{\Loc_G(T)}$.

\end{defn}

\begin{theorem}\label{intro Hecke thm}
(1)
There is a canonical monoidal equivalence
$$
\xymatrix{
\H^\affine_G = \DCoh(\St_G) \ar[r]^-\sim & \Fun^{ex}_{\Perf(G/G)}(\Perf(B/B), \Perf(B/B))
}
$$

(2) There is a canonical $E_2$-monoidal identification of the center
$$
\xymatrix{
\DCoh_{\P}(\Loc_G(T))\ar[r]^-\sim &  \Center(\H^\affine_G) 
}
$$

(3) There is a canonical $S^1$-equivariant identification of the trace
$$
\xymatrix{
\Trace(\H^\affine_G) \ar[r]^-\sim & \DCoh_\N(\Loc_G(T))
}
$$

\end{theorem}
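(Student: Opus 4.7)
The plan is to deduce parts (1), (2), (3) respectively from \autoref{intro prop monoid}, \autoref{intro thm center}, and \autoref{intro thm trace} applied with $X = B/B$, $Y = G/G$, and $p = Lq \colon B/B \to G/G$ the Grothendieck--Springer map. A preliminary identification needed throughout is $LY \simeq \Loc_G(T)$: since $T = S^1 \times S^1$, the mapping stack $\Map(T, BG)$ is the iterated loop space $L(L(BG))$, and using $L(BG) \simeq G/G = Y$ gives $LY \simeq \Loc_G(T)$. Under this identification, $X \times_Y X$ is by definition $\St_G$ and $\DCoh(X \times_Y X) = \H_G^\affine$, so part (1) is an immediate restatement of \autoref{intro prop monoid}.

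To invoke the center and trace theorems I verify their hypotheses: smoothness of $B/B$ and $G/G$ follows from smoothness of the groups $B, G$; properness of $p$ comes from the projectivity of the flag variety $G/B$, since $p$ is the adjoint-quotient of the classical Grothendieck--Springer resolution $\widetilde{G} \to G$, equivalently the looping of $q \colon BB \to BG$ whose fibers are $G/B$; surjectivity holds because every element of $G$ lies in some Borel; and quasi-smoothness of $p$ follows from smoothness of $q$, since the relative cotangent complex of $Lq$ inherits perfect tor-amplitude in $[-1,0]$ after looping.

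For part (2), \autoref{intro thm center} yields $\Center(\H_G^\affine) \simeq \DCoh_{\proper/Y}(LY)$. Under our identifications, the structure map $LY \to Y$ corresponds to the restriction map $\Loc_G(T) \to \Loc_G(S^1)$ along inclusion of the first $S^1$ factor into $T$, so the properness condition $\DCoh_{\proper/Y}(LY)$ agrees by definition with $\DCoh_\P(\Loc_G(T))$. This completes part (2).

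For part (3), \autoref{intro thm trace} yields $\Trace(\H_G^\affine) \simeq \DCoh_{\Lambda_{X/Y}}(\Loc_G(T))$, so the remaining and most delicate step is to identify $\Lambda_{X/Y} = p_* \delta^! \SuppSp_{\St_G}$ with the global nilpotent cone $\N$. I would analyze this fiberwise over commuting pairs $(g_1, g_2) \in \Loc_G(T)$: the pull-push via the Grothendieck--Springer correspondence sends a covector $\xi \in \SuppSp_{\Loc_G(T)}$ into $\Lambda_{X/Y}$ precisely when $\mu(\xi) \in \gg^*/G$ is conjugate into the annihilator $\bb^\perp$ of some Borel subalgebra $\bb \subset \gg$ whose corresponding Borel subgroup contains both $g_1, g_2$. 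Since $\bb^\perp \subset \gg^*$ is the nilpotent radical, and conversely any nilpotent covector lies in some such $\bb^\perp$ compatible with a given commuting pair, this identifies $\Lambda_{X/Y}$ set-theoretically with the preimage $\mu^{-1}\chi^{-1}(0) = \N$. The main obstacle will be promoting this fiberwise set-theoretic identification to the required scheme-theoretic equality of conic closed subsets of $\SuppSp_{\Loc_G(T)}$; I expect to handle this by combining explicit computations on $\SuppSp_{\St_G}$ with the microlocal base-change and descent techniques developed in \autoref{sect: supp}.
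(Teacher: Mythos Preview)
Your approach to parts (1) and (2) matches the paper exactly: both are immediate applications of the general results with $X=B/B$, $Y=G/G$, $p=Lq$. Your verification of the hypotheses (smoothness, properness, surjectivity, quasi-smoothness) is also correct.

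The gap is in part (3), in your description of $\Lambda_{X/Y}$. The fundamental correspondence is
\[
\St_G \;\longleftarrow\; LY \times_Y X \;\longrightarrow\; LY,
\]
and $LY \times_Y X = \Loc_G(T) \times_{G/G} B/B$ parametrizes a $G$-local system on $T$ together with a $B$-reduction along the \emph{first} loop only. Consequently, over a point $(g_1,g_2)$, the fiber $\Lambda_{X/Y}|_{(g_1,g_2)}$ consists of those $v \in \gg^* \simeq \gg$ (fixed by $\ad_{g_1}$ and $\ad_{g_2}$) for which there exists $g \in G$ with $\ad_g g_1 \in B$ and $\ad_g v \in \mathfrak{n}$. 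Only the first monodromy is required to lie in the Borel, not both. Your condition ``Borel subgroup contains both $g_1,g_2$'' is too strong: as the paper notes in \autoref{rem:surj-example}, for $G=\PGL_2$ there exist commuting pairs $(g_1,g_2)$ not contained in any common Borel, so your version of $\Lambda_{X/Y}$ would be strictly smaller than $\N$ there and the equality would fail.

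With the correct description, the inclusion $\Lambda_{X/Y}\subset\N$ is immediate, and the reverse inclusion reduces to: given $\alpha\in G$ and nilpotent $v\in\gg$ with $\ad_\alpha v = v$, find a Borel containing both. The paper's argument is a two-line piece of Lie theory: $v$ generates an $\AA^1$-action on $G/B$ preserving the fixed locus $(G/B)^\alpha$, which is nonempty and projective, hence carries an $\AA^1$-fixed point. No microlocal descent from \autoref{sect: supp} is needed for this step (that machinery has already been absorbed into the proof of \autoref{intro thm trace}), and your worry about promoting the set-theoretic identification to a ``scheme-theoretic'' one is unnecessary: support conditions in $\Con(LY)$ are closed conic subsets, determined by their geometric fibers.
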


\begin{remark}
Let us point out a particularly curious aspect of the theorem.

On the one hand, the description of the center is strongly asymmetric between the two loops of $T$. This is not surprising considering the two loops play different roles: the first is implicit in the adjoint quotients $L(BG) \simeq G/G, L(BB) \simeq B/B$ and hence in the global Steinberg stack as well $\St_G = L(B\backslash G/B)$; the second arises in the geometric identification of the center.

On the other hand, the description of the trace is symmetric in the two loops.
\end{remark}

Finally, our arguments also apply to more traditional versions of the affine Hecke category where we linearize and constrain our focus to nilpotent elements. Fix the isomorphism $\gg^*\simeq \gg$ of an invariant inner product.
Let $\B=G/B$ denote the flag variety and $T^*\B \to \gg^* \simeq \gg$ the Springer/moment map.


Let us introduce the unipotent Steinberg stack
$$
\xymatrix{
\St_G^u=T^*\B/G \times_{\gg/G}T^*\B/G 
\simeq (T^*\B/G \times_{\gg}T^*\B/G)/G 
}
$$
Note that $\St_G^u$ has a nontrivial derived structure since we work over $\gg$ rather than the nilpotent cone.
Note as well that we could equivalently work over the formal completion of $\gg$ along the nilpotent cone.
 Introduce the  unipotent affine Hecke category
 $$
 \xymatrix{
 \H^{\affine,u}_G=\DCoh(\St_G^u)
}$$
and the unipotent commuting stack 
$$
\xymatrix{
\Loc_G(T)^u = \{ (g_1, g_2) \in \hat G_u \times G \, |\, g_1 g_2 g_1^{-1} g_2^{-1} = 1\}/G 
}
$$ 
of local systems where the first monodromy  $g_1 \in \hat G_u$ is in the formal neighborhood of the unipotent elements
$ G_u\subset G$.
Now compatibly with \autoref{intro Hecke thm}, our methods provide the following.

\begin{theorem} There are canonical identifications
$$ \xymatrix{
 \DCoh_{\P}(\Loc_G(T)^u)\ar[r]^-\sim &  \Center(\H^{\affine, u}_G) 
&
\Trace(\H^{\affine, u}_G) \ar[r]^-\sim & \DCoh_\N(\Loc_G(T)^u)
} $$
which are  $E_2$-monoidal and $S^1$-equivariant  respectively.
\end{theorem}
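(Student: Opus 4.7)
The strategy mirrors the proof of \autoref{intro Hecke thm} in the linearized setting, applying \autoref{intro prop monoid}, \autoref{intro thm center}, and \autoref{intro thm trace} to the Springer/moment map $p : X = T^*\B/G \to Y = \gg/G$. Both $X$ and $Y$ are smooth, and $p$ is proper and quasi-smooth; although $p$ is not surjective onto $Y$, its image is $\N/G$, and the relevant categorical invariants naturally live over the formal completion $\hat{Y}$ of $Y$ along $\N/G$. By construction $X \times_Y X \simeq \St_G^u$, so \autoref{intro prop monoid} presents $\H^{\affine, u}_G = \DCoh(\St_G^u)$ as a convolution category of $\Perf(\hat{Y})$-linear endofunctors of $\DCoh(X)$, establishing the monoidal framework.

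For the center, first identify $L\hat{Y} \simeq \Loc_G(T)^u$: one has $L(\hat{\gg}/G) \simeq \{(x,g) \in \hat{\N} \times G : \Ad(g)x = x\}/G$, and the exponential map $\hat{\N} \xrightarrow{\sim} \hat{G}_u$ identifies this with $\{(g_1,g_2) \in \hat{G}_u \times G : g_1 g_2 = g_2 g_1\}/G = \Loc_G(T)^u$. The condition $\proper/\hat{Y}$ appearing in \autoref{intro thm center} corresponds to coherence of pushforward along the restriction map to the first loop, matching the defining condition of $\DCoh_{\P}(\Loc_G(T)^u)$ and yielding the first identification as $E_2$-monoidal categories.

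For the trace, apply \autoref{intro thm trace}: the trace is $\DCoh_{\Lambda_{X/\hat{Y}}}(L\hat{Y})$ for $\Lambda_{X/\hat{Y}} = p_*\delta^! \SuppSp_{\St_G^u}$, and it remains to show this coincides with the global nilpotent cone $\N \subset \SuppSp_{\Loc_G(T)^u}$. The cotangent complex $\LL_{\Loc_G(T)^u}$ at a local system $\P$ is computed by $C^*(T, \gg^*_\P)[1]$, whose degree $-1$ cohomology controls $\SuppSp_{\Loc_G(T)^u}$ via the commutator relation. Tracking $\delta^! \SuppSp_{\St_G^u}$ through $\pi$ amounts to following the characteristic locus of the Springer map through the correspondence; a direct Springer-theoretic computation shows the image is precisely the nilpotent locus $\N$, giving the $S^1$-equivariant identification of the trace.

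The main obstacle is the technical handling of the formal completion $\hat{Y}$: \autoref{intro thm center} and \autoref{intro thm trace} assume an honest smooth target, so one must either verify the theorems extend to the ind-smooth setting via a limit over finite infinitesimal thickenings of $\N/G$ in $\gg/G$, or argue that all coherent sheaves on $\St_G^u$ are already supported in the formal neighborhood of $\N/G$, making the computations invariant under passage to $\hat{Y}$. The second main technical step, the microlocal identification $\Lambda_{X/\hat{Y}} = \N$, is a calculation in Springer theory that should follow cleanly from the explicit geometry of the Springer map $T^*\B \to \N$ and the fiber sequence controlling $\SuppSp_{\Loc_G(T)^u}$.
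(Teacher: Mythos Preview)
Your proposal is correct and matches the paper's approach. The paper does not give a separate proof for this unipotent variant; it simply asserts that ``compatibly with \autoref{intro Hecke thm}, our methods provide the following,'' meaning exactly the argument you outline: apply \autoref{thm:center-matrix} and \autoref{thm:trace-matrix} to the Springer map $p:T^*\B/G\to \gg/G$ (replaced by its formal completion along $\N/G$ to ensure surjectivity), identify $L\hat Y\simeq\Loc_G(T)^u$ via the exponential, and then check that the support condition $\Lambda_{X/\hat Y}$ coincides with the global nilpotent cone by the same fixed-point argument used at the end of Section~4.
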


Furthermore, one can introduce the natural $\GG_m$-dilation action on $\gg$ and the induced action on $\St_G^u$. 
Introduce the $\GG_m$-equivariant unipotent affine Hecke category
 $$
 \xymatrix{
 \H^{\affine,u}_{G\times \GG_m}=\DCoh(\St_G^u/\GG_m)
}$$
and the twisted unipotent commuting stack 
$$
\xymatrix{
\Loc_G(T)^u_{\GG_m} = \{ (g_1, g_2, z) \in \hat G_u\times G \times \GG_m \, |\, g_1 g_2  (g_1^{-1})^z g_2^{-1} = 1\}/(G\times \GG_m)
}
$$ 
where the first monodromy $g_1 \in \hat G_u$ is in the formal neighborhood  of the unipotent elements $ G_u\subset G$,
and $(g_1^{-1})^z \in \hat G_u$ denotes the  dilation of its inverse by the scalar $z\in \GG_m$.
Now our methods provide the following.

\begin{theorem} There are canonical identifications
$$
\xymatrix{
 \DCoh_{\P}(\Loc_G(T)^u_{\GG_m})\ar[r]^-\sim &  \Center(\H^{\affine, u}_{G\times \GG_m}) 
&
\Trace(\H^{\affine, u}_{G\times \GG_m}) \ar[r]^-\sim & \DCoh_\N(\Loc_G(T)^u_{\GG_m})
}
$$
which are  $E_2$-monoidal and $S^1$-equivariant  respectively.
\end{theorem}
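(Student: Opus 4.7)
The plan is to apply \autoref{intro prop monoid}, \autoref{intro thm center}, and \autoref{intro thm trace} directly to the proper, surjective, quasi-smooth map
\[ p : X := T^*\B/(G\times\GG_m) \longrightarrow Y := \widehat{\gg}_\N/(G\times\GG_m), \]
where $\widehat{\gg}_\N$ denotes the formal completion of $\gg$ along the nilpotent cone and $\GG_m$ acts by dilation on $\gg$. Both $X$ and $Y$ are smooth, since $T^*\B$, $\gg$, and the reductive group $G\times\GG_m$ are smooth and formal completion preserves smoothness in the derived sense; the map $p$ is the $(G\times\GG_m)$-equivariant Springer resolution, which is projective, surjects onto the formal completion, and is quasi-smooth because the moment map realizes its fibers as derived complete intersections. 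By construction $X\times_Y X \simeq \St_G^u/\GG_m$, so \autoref{intro prop monoid} supplies the tautological monoidal description of $\H^{\affine,u}_{G\times\GG_m} = \DCoh(\St_G^u/\GG_m)$ as an integral transform category.

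Next I would identify $LY$ with $\Loc_G(T)^u_{\GG_m}$. A point of $LY$ is a triple $(\xi,g,z)\in\widehat{\N}\times G\times\GG_m$ satisfying the fixed-point equation $z\cdot\Ad(g)\xi = \xi$ for the $(G\times\GG_m)$-action on $\gg$, modulo $(G\times\GG_m)$. The $(G\times\GG_m)$-equivariant formal exponential intertwines $\widehat{\N}$ with $\hat G_u$ and converts this linear equation into the twisted commutator relation $g_1 g_2(g_1^{-1})^z g_2^{-1} = 1$ defining $\Loc_G(T)^u_{\GG_m}$. Under this identification the first-loop projection $LY\to Y$ corresponds to $(\xi,g,z)\mapsto \xi$, so the properness-over-$Y$ subcategory $\DCoh_{\proper/Y}(LY)$ matches $\DCoh_\P(\Loc_G(T)^u_{\GG_m})$. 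Substituting into \autoref{intro thm center} then yields the $E_2$-monoidal identification of the center.

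For the trace I would substitute into \autoref{intro thm trace} and identify $\Lambda_{X/Y} = p_*\delta^!\SuppSp_{X\times_Y X}$ with the global nilpotent cone $\N\subset\SuppSp_{\Loc_G(T)^u_{\GG_m}}$. Writing the cotangent fiber of $LY$ at a local system $\P$ as $C^0(T,\gg^*_\P)$ and composing with the moment map $\mu\colon\SuppSp_{LY}\to\gg^*/G$, the calculation reduces to checking that covectors appearing in $\SuppSp_{X\times_Y X}$ map into the nilpotent cone of $\gg^*$, which is automatic because $T^*\B$ already maps into the (formal) nilpotent cone; the reverse containment is then read off at the dense locus where $\mu$ is a submersion. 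The main obstacle I anticipate is this explicit matching of $\Lambda_{X/Y}$ with $\N$ in the $\GG_m$-dilated, formally completed setting, and the book-keeping needed to confirm that the strict-Cartesian and quasi-smoothness hypotheses underlying the descent machinery of \autoref{sect: supp} survive the adjunction of the $\GG_m$ factor.
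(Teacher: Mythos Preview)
Your overall approach is exactly what the paper means by ``our methods provide the following'': apply \autoref{intro thm center} and \autoref{intro thm trace} to the equivariant Springer map and identify $LY$ with $\Loc_G(T)^u_{\GG_m}$ via the formal exponential. The identifications $X\times_Y X \simeq \St^u_G/\GG_m$ and of the properness-over-$Y$ condition with the subscript $\P$ are correct, and you are right that the strict-Cartesian and quasi-smoothness bookkeeping is already absorbed into the general theorems, so the extra $\GG_m$-factor introduces nothing new there.

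There is one genuine gap: your argument for the inclusion $\N \subset \Lambda_{X/Y}$. Agreement of two closed conic subsets of $\SuppSp_{LY}$ over a dense open of $(LY)_{cl}$ does not force agreement everywhere --- $\N$ can have entire fibers over the complement that are not contained in the closure of its generic part, so ``read off at the dense locus'' is not enough. What is actually needed is the pointwise fixed-point argument the paper carries out for the global Hecke category. At a geometric point $(\xi,g,z)$ of $LY$, a covector $v\in\SuppSp_{LY}|_{(\xi,g,z)}$ (identified with an element of $\gg^*\simeq\gg$) commutes with $\xi$, and it lies in $\Lambda_{X/Y}$ precisely when some Borel $B'$ has both $\xi$ and $v$ in $\nn_{B'}$. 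Given $v$ nilpotent, you produce such a $B'$ as follows: the Springer fiber over $\xi$ is nonempty and projective; the nilpotent $v$ generates an $\AA^1$-action on the flag variety which preserves this Springer fiber because $[v,\xi]=0$; any $\AA^1$-action on a nonempty projective variety has a fixed point $B'$; and then $v\in\bb_{B'}$ together with $v$ nilpotent forces $v\in\nn_{B'}$. This is the direct Lie-algebra analogue of the argument in the global case, and the $\GG_m$-twist plays no role in it.
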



\subsubsection{Hecke categories, character sheaves and geometric Langlands}
We conclude this section with a brief, informal discussion of the place of \autoref{intro Hecke thm}, and its variants, within geometric representation theory.
To match with the conventions of the subject, and for the purposes of this section only, 
the reductive group denoted above by $G$ will be denoted $\Gv$ since it will arise naturally as a Langlands dual group.

The unipotent Steinberg stack $\St^u_{\Gv}$
plays the central role 
in Kazhdan-Lusztig's solution  \cite{KL}  of the Deligne-Langlands
conjecture on representations of affine Hecke algebras (see also \cite{CG}):
the Grothendieck group of $\GG_m$-equivariant coherent sheaves on $\St^u_{\Gv}$ is isomorphic to the affine Hecke algebra of $G$. 
This  enables one to classify irreducible representations of the affine Hecke algebra
in terms of $q$-commuting pairs.

Bezrukavnikov~\cite{roma} has categorified the Kazhdan-Lusztig realization of the affine Hecke algebra: the standard categorification   in terms of mixed sheaves
on the affine flag variety of $G$ is equivalent to the categorification by
$\GG_m$-equivariant coherent sheaves on $\St^u_{\Gv}$.
The affine Hecke category appears naturally in the geometric Langlands program as  the modifications acting on sheaves on moduli stacks of $G$-bundles with
parabolic structure.  Bezrukavnikov's theorem realizes the geometric Langlands duality for these modifications, or in other words the tamely ramified generalization of the geometric Satake theorem. It is the centerpiece in the geometric approach to a wide variety of problems in representation theory \cite{romaICM}.

With Geometric Langlands and other natural problems in mind, it is meaningful to study the representation theory of the affine Hecke category itself.
By abstract nonsense, any dualizable module of a monoidal category has a character, which is an object in the trace category.
Thus by \autoref{intro Hecke thm} and its variants,  characters of dualizable modules of the affine Hecke category give coherent sheaves with nilpotent singular support on the commuting
stack $\Loc_{G^\vee}(T)$ and its unipotent and twisted variants. Note that this is consonant with the Deligne-Langlands classification of representations of the affine Hecke algebra
 in terms of $q$-commuting pairs.
(This relation between the categorified and classical will be pursued in \cite{helm}.)

It is also natural to relate \autoref{intro Hecke thm} to the character theory of the finite Hecke category of
Borel-biequivariant $\D$-modules on $\Gv$. The main result of \cite{character} identifies the monoidal center and trace 
of the finite Hecke category with 
the category of unipotent character sheaves on $\Gv$, that is, adjoint-equivariant $\D$-modules on $\Gv$ with nilpotent singular support and  trivial generalized central character.
%
%
The relation between this  and   \autoref{intro Hecke thm} is given by the results of \cite{conns, reps}. Namely, 
coherent sheaves on a loop stack, such as the Steinberg stack $\St_{G^\vee} = L(B^\vee\backslash G^\vee/ B^\vee)$ or commuting stack $\Loc_{\Gv}(T^2) = L(G^\vee/G^\vee)$, recover $\D$-modules on the stack via the process of $S^1$-localization and restriction to small loops. 
This supports the perspective that $\DCoh_\N(\Loc_G(T))$ is the spectral realization of 
the putative category of ``affine character sheaves" for the $p$-adic group associated to $G$.

Finally, the trace category $\DCoh_\N(\Loc_{G^\vee}(T))$ 
is also closely related to the genus one case of the geometric Langlands conjecture. As formulated in \cite{ag},
 the spectral side of the geometric Langlands conjecture on a smooth projective curve $C$ is the category
$\DCoh_\N({ \rm Conn}_{G^\vee}(C))$ of coherent sheaves with nilpotent singular support on the derived stack of flat $G^\vee$-connections on $C$.
Note that the de Rham space ${ \rm Conn}_{G^\vee}(C)$ can be identified analytically, though not algebraically, with the Betti space $\Loc_{G^\vee}(C)$.
But unlike the de Rham space, the Betti space and hence the category $\DCoh_\N(\Loc_{\Gv}(C))$ is  a topological invariant
of $C$. Thus the category $\DCoh_\N(\Loc_{G^\vee}(T))$ 
 provides a topological version of the genus one geometric
Langlands spectral category, and
\autoref{intro Hecke thm} ties it to the 
representation theory of the affine Hecke category.


\subsection{Standing assumptions}
Unless otherwise noted, our standing assumptions are as follows:

We work over a characteristic zero base field $k$.  

By a \demph{category} we will mean a $k$-linear stable dg-category or $k$-linear stable $\oo$-category.

By a \demph{stack} $\X$, we will mean a derived Artin stack over $k$ which is quasi-compact, almost of finite-presentation, and geometric.  This implies that $\QCsh(\X) \simeq \Ind \DCoh(\X)$ by \cite{DrinfeldGaitsgory}.  



\subsection{Acknowledgements}
We gratefully acknowledge the support of NSF grants DMS-1103525 (DBZ), DMS-1319287 (DN),
and an NSF Postdoctoral Fellowship (AP).



\section{Base-change and descent with support}\label{sect: supp}


\subsection{Preliminaries}

We begin by collecting some  basic notions about the singular support of coherent complexes (see~\cite{ag}
for a comprehensive account).
\label{na:supt}

\subsubsection{Odd cotangent bundle}
Let $X$ be a quasi-smooth derived stack and $\LL_X$  its cotangent complex.

Let $X_{cl}$ denote the underlying  classical stack of $X$. Introduce the shifted cotangent complex
$$
\xymatrix{
\SuppSp_X = \Spec_{X_{cl}} \Sym_{X_{cl}} H^1( \LL_X^\vee)
\simeq (\Spec_{X} \Sym_{X}  \LL_X^\vee[1])_{cl}
}$$
There is a natural affine projection $\SuppSp_X \to X_{cl}$ with fiberwise $\GG_m$-action and the fiber $\SuppSp_X|_x$ at a point $x\in X_{cl}$ is the degree $-1$ cohomology of $\LL_Z|_x$. Informally, one can think of $\SuppSp_X \to X_{cl}$ as a bundle of vector spaces of varying dimensions. We denote by $\{0\}_X\subset \SuppSp_X$ the zero-section.

\begin{example}
If $Z$ is a smooth scheme, and $X = LZ = \Map(S^1, Z)$ is its loop space, then $\SuppSp_X \to X_{cl}$ is the usual cotangent bundle $T^*_Z \to Z$.
\end{example}

\begin{example}
If $X = BG$ is a classifying stack, then $\SuppSp_X \simeq \gg^*/G \to BG $ is the coadjoint quotient.
\end{example}

\subsubsection{Microlocalization}

Let $X$ be a quasi-smooth derived stack. Let $\Con X $ denote the set of arbitrary unions of closed conic subsets of $\SuppSp_X$. 
An important invariant of any $\F\in\QC^!(X)$ is its singular support 
$$
    \supp \F \subset \SuppSp_X
    $$
It is a conic Zariski-closed subset when $\F\in \DCoh(X)$ and in general 
a union of conic Zariski-closed subsets.  

Singular support is a smooth-local notion and given by the  following construction for $X$ affine (see also the description of \autoref{lci remark}). 
There is a natural map of graded commutative $\O(X)$-algebras
    $$
\xymatrix{
\O({\SuppSp_X}) \ar[r] & \mathit{HH}^{ev}(X)
}    
$$
to the even Hochschild cohomology 
restricting to maps 
$$
\xymatrix{
\O(X_{cl})\ar[r] &  \mathit{HH}^0(X) & H^{1}(\LL_X^\vee)\ar[r] & \mathit{HH}^2(X)
}
$$ 
In turn, there is natural map from $\mathit{HH}^{ev}(X)$
 to the graded center of the homotopy category of $\QC^!(X)$.
The singular support $ \supp \F \subset \SuppSp_X$ of an object $\F\in \QC^!(X)$ is  its traditional support under the induced central action of $\O({\SuppSp_X})$.

   Let $\Con X$ denote the set of conic Zariski-closed subsets of $\SuppSp_X$.
    For any $\Lambda \in \Con X$, one defines the full subcategory
        $$ 
        \xymatrix{
        i_{\Lambda} \colon \QCsh_{\Lambda}(X) \ar@{^(->}[r] &  \QC^!(X) 
        }
        $$
         of ind-coherent complexes supported along $\Lambda$. 
    The inclusion $i_{\Lambda}$ admits a right adjoint 
    $$
    \xymatrix{
    \RGamma_{\Lambda}: \QC^!(X) \ar[r] &  \QC_\Lambda^!(X) 
    }
    $$
     We will often regard $\QCsh_{\Lambda}(X)$ as a subcategory of $\QCsh(X)$ via the embedding $i_{\Lambda}$, and 
     also regard $\RGamma_\Lambda$ as an endofunctor of  $\QCsh(X)$.

    We set $\DCoh_\Lambda(X) = \DCoh(X) \cap \QC^!_\Lambda(X)$. By \cite[Cor. 8.2.8]{ag}, for  global complete intersection stacks (in the sense of \cite[Sect. 8.2]{ag}), we have $\QC^!_\Lambda(X) = \Ind \DCoh_\Lambda(X)$. 
  
\begin{remark}\label{lci remark} For $\F \in \DCoh X$, one has $\supp \F \subset \{0\}_X$ if and only if $\F \in \Perf X$.  This observation can be upgraded to a geometric description of $\supp \F$ as follows.

  Suppose that $\eta \colon \Spec k \to X$ is a geometric point, and that in a neighborhood of $\eta$, one has a presentation of $X$ as an iterated fiber 
\[ \xymatrix{
X \ar@{^(->}[r]\ar[d] & X' \ar@{^(->}[r]\ar[d] & M \ar[d]^{f_1,\ldots,f_n} \\
\{0 \in \AA^n\} \ar@{^(->}[r] & \AA^1 \times \{0\in \AA^{n-1}\} \ar@{^(->}[r] & \AA^n } 
\]
where $M$ is affine and smooth.
Then one can interpret $df_1$ as a section of $\SuppSp X$ and 
one has
\[ \res{df_1}{\eta} \not \in \res{\supp \F}{\eta}  \subset \res{\SuppSp X}{\eta} \]
if and only if $\F$ is contained in the smallest thick subcategory of $\DCoh X$ generated by pullbacks from $\DCoh X'$.
Informally speaking,  this is the  case when  ``$\F$ extends in the $f_1$ direction near $\eta$''.
\end{remark}

\begin{lemma}\label{lem:multi-local}
     For $\Lambda, \Lambda' \in \Con X$, there is a natural equivalence $\RGamma_{\Lambda} \circ \RGamma_{\Lambda'} \simeq\RGamma_{\Lambda \cap \Lambda'}$.

\end{lemma}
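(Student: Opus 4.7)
The plan is to identify $\RGamma_\Lambda \circ \RGamma_{\Lambda'}$ with the right adjoint to the inclusion $i_{\Lambda \cap \Lambda'} \colon \QC^!_{\Lambda \cap \Lambda'}(X) \hookrightarrow \QC^!(X)$; uniqueness of adjoints then supplies the desired equivalence $\RGamma_\Lambda \circ \RGamma_{\Lambda'} \simeq \RGamma_{\Lambda \cap \Lambda'}$. To set this up, I would first record the elementary identification of full subcategories
\[
\QC^!_{\Lambda \cap \Lambda'}(X) \;=\; \QC^!_{\Lambda}(X) \cap \QC^!_{\Lambda'}(X)
\]
inside $\QC^!(X)$, which is immediate from the definition of singular support via the central action of $\O(\SuppSp_X)$: having support in both $\Lambda$ and $\Lambda'$ is the same condition on this action as having support in $\Lambda \cap \Lambda'$.

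Next I would verify the desired adjunction property. For any $\F \in \QC^!_{\Lambda \cap \Lambda'}(X)$, in particular $\F$ lies in each of $\QC^!_{\Lambda}(X)$ and $\QC^!_{\Lambda'}(X)$, so two applications of the defining adjunction give natural equivalences
\[
\Hom(\F, \, \RGamma_\Lambda \RGamma_{\Lambda'} \G) \;\simeq\; \Hom(\F, \, \RGamma_{\Lambda'} \G) \;\simeq\; \Hom(\F, \, \G)
\]
for every $\G \in \QC^!(X)$. Thus $\RGamma_\Lambda \circ \RGamma_{\Lambda'}$ satisfies the universal property of $\RGamma_{\Lambda \cap \Lambda'}$ provided it actually factors through $\QC^!_{\Lambda \cap \Lambda'}(X)$.

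This last factorization is the only substantive point, and I expect it to be the main obstacle. Support in $\Lambda$ holds automatically by definition of the outer $\RGamma_\Lambda$; the content is that $\RGamma_\Lambda$ preserves the property of having support contained in $\Lambda'$. The key observation is that $\QC^!_{\Lambda'}(X) \subset \QC^!(X)$ is stable under the central $\O(\SuppSp_X)$-action, so the colocalization $\RGamma_\Lambda$ is automatically $\O(\SuppSp_X)$-linear. To make this concrete, I would reduce to the affine case by smooth descent and realize $\RGamma_\Lambda$ as an iterated cone/colimit of Koszul-type operations built from multiplication by generators of the defining ideal of $\Lambda$ in $\O(\SuppSp_X)$; such operations manifestly preserve any $\O(\SuppSp_X)$-stable subcategory, and in particular send objects supported in $\Lambda'$ to objects supported in $\Lambda'$. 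Combining this with the two steps above, $\RGamma_\Lambda \circ \RGamma_{\Lambda'}$ lands in $\QC^!_{\Lambda \cap \Lambda'}(X)$ and represents the correct right adjoint, yielding the claimed natural equivalence.
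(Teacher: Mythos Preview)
Your proposal is correct. The paper itself does not give a proof but simply cites \cite[Prop.~2.2.6]{ag}, so there is nothing to compare at the level of argument; your sketch is essentially the content of that cited proposition (which is formulated abstractly for any compactly generated category with a central graded-algebra action). The identification $\QC^!_{\Lambda \cap \Lambda'}(X) = \QC^!_{\Lambda}(X) \cap \QC^!_{\Lambda'}(X)$ and the reduction to showing $\RGamma_\Lambda$ preserves $\QC^!_{\Lambda'}(X)$ via its Koszul-type construction are exactly the ingredients used there.
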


\begin{proof} See~\cite[Prop. 2.2.6]{ag}.
\end{proof}


\subsubsection{Functoriality}

        Associated to a map $f \colon X \to Y$ is a correspondence
        \begin{equation}\label{eq basic corr}
\xymatrix{
\SuppSp_{X} & \ar[l]_-{df^*} \SuppSp_Y \times_Y X \ar[r]^-{\tilde f} & \SuppSp_Y
}
\end{equation}

      \begin{defn} Let $f:X\to Y$ be a map of quasi-smooth stacks.
    \begin{enumerate}
      \item Given a subset $U \subset \SuppSp_X$, we may form the subset
      $$
       f_* U = \tilde f ((df^*)^{-1}(U)) \subset \SuppSp_Y
      $$
      If $f:X\to Y$ is proper, then $\tilde f$ is proper, and this defines a map
        $$
        \xymatrix{
         f_* : \Con X \ar[r] &  \Con Y 
         }
         $$
   
      \item Given a subset $V \subset \SuppSp_Y$, we may form the subset
      $$
       f^! V =  df^*(X \times_Y V) \subset \SuppSp_X
      $$
      If $f:X\to Y$ is quasi-smooth, then $df^*$ is a  closed immersion, and this defines a map
                            $$
            \xymatrix{
             f^! : \Con Y \ar[r] & \Con X 
             }
             $$  
                             \end{enumerate}

\end{defn}

\begin{lemma}\label{lem:supt-funct}  Let $f:X\to Y$ be a map of quasi-smooth stacks.
    \begin{enumerate}
      \item Suppose that $\F\in \QC^!(X)$ and that $f$ is schematic and quasi-compact.  Then, we have 
      $$\supp f_*\F \subset f_*\supp \F
      $$
      Thus if 
      $ 
      \tilde f_*\Lambda_X \subset \Lambda_Y,
      $ then
      $$
      f_*(\QC^!_{\Lambda_X}(X)) \subset \QC^!_{\Lambda_Y}(Y)
      $$
      
              \item Suppose that $\F\in \QC^!(Y)$.  Then, we have 
      $$\supp f^!\F \subset f^! \supp \F
      $$
        Thus if 
      $ 
     f^!\Lambda_Y \subset \Lambda_X,
      $ then
      $$
      f^!(\QC^!_{\Lambda_Y}(Y)) \subset \QC^!_{\Lambda_X}(X)
      $$
   
                  \end{enumerate}

  \end{lemma}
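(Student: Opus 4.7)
The strategy is to deduce both inclusions from the naturality of the central action of $\O(\SuppSp_{(-)})$ on $\QC^!(-)$ across the correspondence \eqref{eq basic corr}. This is the microlocal functoriality for $\QC^!$ developed in \cite{ag}, paralleling the classical Kashiwara-type functoriality estimates for $\D$-modules.

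The first step is to set up the key compatibility: for any map $f\colon X \to Y$ of quasi-smooth stacks, the ring homomorphisms $(df^*)^*\colon \O(\SuppSp_X) \to \O(\SuppSp_Y\times_Y X)$ and $\tilde f^*\colon \O(\SuppSp_Y) \to \O(\SuppSp_Y\times_Y X)$ refine the separate $\O(\SuppSp_X)$-action on $\F$ and $\O(\SuppSp_Y)$-action on $f_*\F$ to a common $\O(\SuppSp_Y\times_Y X)$-action on $f_*\F$, and dually on $f^!\G$. At the level of the defining Hochschild-cohomology actions this reflects the fact that a relative Hochschild complex for $X \to Y$ has graded center computed by $\O(\SuppSp_Y\times_Y X)$ and receives natural maps from both $\mathit{HH}^{ev}(X)$ and $\mathit{HH}^{ev}(Y)$, acting compatibly through the adjunction $f^!\dashv f_*$.

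Granted this, part (i) runs as follows. Suppose $y \notin f_*\supp\F = \tilde f\bigl((df^*)^{-1}(\supp\F)\bigr)$, so every point of the fiber $\tilde f^{-1}(y)\subset \SuppSp_Y\times_Y X$ lies outside $(df^*)^{-1}(\supp\F)$. Working smooth-locally on $Y$, one produces an element $a \in \O(\SuppSp_Y)$ invertible in a neighborhood of $y$ whose $\tilde f$-pullback $\tilde f^* a$, after transporting via $(df^*)^*$, acts invertibly on $\F$ in the relevant localization. By the compatibility, $a$ then acts invertibly on the corresponding localization of $f_*\F$, so $y \notin \supp f_*\F$. The containment $f_*(\QC^!_{\Lambda_X}(X)) \subset \QC^!_{\Lambda_Y}(Y)$ under the hypothesis $\tilde f_*\Lambda_X \subset \Lambda_Y$ is then immediate from the set-theoretic inclusion.

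Part (ii) is the dual argument. Given $x \notin f^!\supp\F = df^*(X\times_Y \supp\F)$, one splits into the case $x \notin \im(df^*)$ --- an open condition because $f$ quasi-smooth makes $df^*$ a closed immersion into $\SuppSp_X$ --- and the case $x = df^*(x')$ with $\tilde f(x') \notin \supp\F$; in both cases one extracts a function on $\O(\SuppSp_X)$ near $x$ acting invertibly on $f^!\F$ by pulling a witness of $\supp\F$ back along the correspondence. The main obstacle throughout is this first compatibility step; once the $\O(\SuppSp_Y\times_Y X)$-intertwining is in hand, the support inclusions and their $\Lambda$-consequences are formal.
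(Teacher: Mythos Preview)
The paper does not prove this lemma; it simply cites \cite{ag}*{Lemma~7.4.5} for pushforwards and \cite{ag}*{Lemma~7.4.2} for pullbacks. So your sketch is not competing with an argument in the paper but rather attempting to reproduce the one in \cite{ag}.

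Your overall strategy---relate the two central actions across the correspondence \eqref{eq basic corr} and read off support inclusions---is indeed the idea behind the proofs in \cite{ag}. However, two points in your execution are off. First, your key claim that there is ``a common $\O(\SuppSp_Y\times_Y X)$-action on $f_*\F$'' refining both the $\O(\SuppSp_X)$-action on $\F$ and the $\O(\SuppSp_Y)$-action on $f_*\F$ is not correct as stated: $f_*\F$ is an object of $\QC^!(Y)$ and carries no natural action of the larger ring (in particular no $\O(X)$-action). The actual argument in \cite{ag} does not produce such an action; instead it works smooth-locally with explicit quasi-smooth presentations and uses the pointwise Koszul-dual description of support, so that the compatibility is checked fiber-by-fiber rather than via a global intertwining. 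Your sentence about ``transporting via $(df^*)^*$'' reflects this confusion: $(df^*)^*$ goes from $\O(\SuppSp_X)$ to $\O(\SuppSp_Y\times_Y X)$, so it cannot be used to produce an element acting on $\F$.

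Second, in part (ii) you invoke ``$f$ quasi-smooth makes $df^*$ a closed immersion,'' but the lemma does not assume $f$ is quasi-smooth---only that $X$ and $Y$ are. The set $f^!\supp\F$ is just a subset of $\SuppSp_X$ here, not a priori closed, and your case split on $x\notin\im(df^*)$ being ``an open condition'' is unjustified in general. The argument in \cite{ag} handles this without that assumption.
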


\begin{proof} See \cite[Lemma 7.4.5]{ag} for pushforwards and \cite[Lemma 7.4.2]{ag} for pullbacks.
\end{proof}

We have the following partial converse to \autoref{lem:supt-funct}.
\begin{prop}\label{prop: functorial support} Let $f \colon X\to Y$ be a map of quasi-smooth stacks.

\begin{enumerate}
\item Suppose that $f$ is schematic and proper.  Then  $\QCsh_{f_* \Lambda_X}(X)$ is the smallest full subcategory of $\QCsh(Y)$ containing the essential image $f_*(\QCsh_{\Lambda_X}(X))$ and closed under colimits.
\item Suppose that $f$ is quasi-smooth.  Then  $\QCsh_{f^! \Lambda_Y}(X)$ is the smallest full subcategory of $\QCsh(X)$ containing the essential image $f^!(\QCsh_{\Lambda_Y}(Y))$ and closed under colimits and tensoring by objects of $\QC(X)$.
\end{enumerate}
\end{prop}

\begin{proof} See  \cite[Prop. 7.4.19]{ag} for assertion (i) and  \cite[Prop. 7.4.14]{ag} for asserion (ii).
\end{proof}

\begin{remark}
The proposition implies that for $\F' \in \DCoh(X')$ (resp., $\F \in \DCoh(X)$) the singular support $\supp f_* \F \in \Con X$ (resp., $\supp f^! \F' = \supp f^* \F' \in \Con X'$) depends only on $\supp \F' \in \Con X'$ (resp., $\supp \F \in \Con X$).

\end{remark}


\subsubsection{Relative tensor products}

\begin{prop}\label{prop: tensor product} Let $X_1,X_2$ be quasi-smooth stacks over a smooth separated base $Y$. Then the functor
of exterior product over $Y$ induces an equivalence
$$\xymatrix{\DCoh(X_1) \otimes_{\Perf(Y)} \DCoh(X_2) \ar[dr]^-{\sim} \ar[rr]^-{\boxtimes_Y}&& \DCoh(X_1 \times_Y X_2)\\
 &    \DCoh_{\Lambda}(X_1 \times_Y X_2)\ar@{^(->}[ur]& }$$
where $\Lambda = i^!(\SuppSp_{X_1\times X_2})$ for 
$i: X_1 \times_Y X_2 \to X_1 \times X_2$.
\end{prop}

\begin{proof} Recall by \cite{toly}*{Proposition B.1.1} that $\boxtimes_Y$ is fully faithful.  By \cite{toly}*{Theorem~B.2.4} exterior products over $k$ generate $\DCoh(X_1 \times X_2)$.   Observe that $i$ is an affine quasi-smooth morphism between quasi-smooth stacks, since it is a base-change of the diagonal of $Y$.  Thus the proof of \cite{ag}*{Prop.~7.4.12} implies that the essential image of $i^*$ generates $\DCoh_Y(X_1 \times_Y X_2)$.\end{proof}


\subsection{Maps of pairs}

\begin{defn} Let $X, Y$ be quasi-smooth stacks, and $\Lambda_X\in \Con X, \Lambda_Y\in \Con Y$.

Define a \demph{map of pairs  $f\colon (X,\Lambda_X)\to (Y,\Lambda_Y)$} to be a map 
$
f\colon X \to  Y 
$ such that 
$
f_*\Lambda_X \subset \Lambda_Y.
$

In this case, we say ``$f$ takes $\Lambda_X$ to $\Lambda_Y$".
\end{defn}

\begin{remark}
Returning to the correspondence~(\autoref{eq basic corr}), let us spell out the above definition. 

For a map of pairs $f\colon (X,\Lambda_X)\to (Y,\Lambda_Y)$, we require
$$
\xymatrix{ 
(df^*)^{-1}( \Lambda_X) \subset X \times_Y \Lambda_Y
}$$
If $f\colon X\to Y$ is quasi-smooth,  so that $df^*$ is a closed immersion, then we can equivalently require
 $$df^*(X \times_Y \SuppSp_Y) \cap \Lambda_X \subset df^* (X \times_Y \Lambda_Y)
 $$
With our previous notation, this  can be rephrased in the form
 $$f^!\SuppSp_Y \cap \Lambda_X \subset f^!\Lambda_Y
 $$
\end{remark}

\begin{lemma}
If $f\colon(X,\Lambda_X)\to (Y,\Lambda_Y)$ is a map of pairs,  then $f_*$ takes $\RGamma_{\Lambda_X}$-local objects to $\RGamma_{\Lambda_Y}$-local objects.  
If $f$ is proper and quasi-smooth, then $f$ provides a map of pairs $f\colon(X,\Lambda_X)\to (Y,\Lambda_Y)$ if and only if $f_* \DCoh_{\Lambda_X}(X) \subset \DCoh_{\Lambda_Y}(Y)$.
\end{lemma}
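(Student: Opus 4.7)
My approach is to reduce both statements to the support functoriality already established: \autoref{lem:supt-funct}(i) for the direct implication that $f$ being a map of pairs forces $f_*$ to respect support, and \autoref{prop: functorial support}(i) for the converse, which characterizes $\QCsh_{f_*\Lambda_X}(Y)$ as generated under colimits by the essential image of $f_*\QCsh_{\Lambda_X}(X)$.

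For the first assertion, take $\F \in \QCsh_{\Lambda_X}(X)$, so $\supp\F \subset \Lambda_X$. By \autoref{lem:supt-funct}(i) combined with the map-of-pairs hypothesis,
\[ \supp f_*\F \;\subset\; f_*\supp\F \;\subset\; f_*\Lambda_X \;\subset\; \Lambda_Y, \]
so $f_*\F\in\QCsh_{\Lambda_Y}(Y)$; that is, $f_*\F$ is $\RGamma_{\Lambda_Y}$-local. The ``only if'' direction of the second assertion is then immediate: for $\F\in\DCoh_{\Lambda_X}(X) = \DCoh(X)\cap\QCsh_{\Lambda_X}(X)$, properness of $f$ gives $f_*\F\in\DCoh(Y)$, and the preceding computation places $f_*\F$ in $\QCsh_{\Lambda_Y}(Y)$, whence $f_*\F\in\DCoh_{\Lambda_Y}(Y)$.

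For the ``if'' direction, assume $f_*\DCoh_{\Lambda_X}(X)\subset\DCoh_{\Lambda_Y}(Y)$. Since $f$ is proper, $f_*$ preserves filtered colimits, and by the standing hypotheses $\QCsh_\Lambda = \Ind\DCoh_\Lambda$ on each side. Passing to ind-completions therefore yields $f_*\QCsh_{\Lambda_X}(X)\subset\QCsh_{\Lambda_Y}(Y)$. Now \autoref{prop: functorial support}(i) identifies $\QCsh_{f_*\Lambda_X}(Y)$ as the smallest cocomplete full subcategory of $\QCsh(Y)$ containing $f_*\QCsh_{\Lambda_X}(X)$; since $\QCsh_{\Lambda_Y}(Y)$ is closed under colimits in $\QCsh(Y)$ and already contains this essential image, we deduce $\QCsh_{f_*\Lambda_X}(Y)\subset\QCsh_{\Lambda_Y}(Y)$. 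Hence $f_*\Lambda_X\subset\Lambda_Y$, using that a closed conic subset $\Lambda\subset\SuppSp_Y$ is recovered from $\QCsh_\Lambda(Y)$ as the union of supports of its objects.

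The only step requiring genuine care is this final translation from an inclusion of categories back to an inclusion of support conditions; the rest is direct bookkeeping in the definitions. Quasi-smoothness of $f$ enters only to ensure the identification $\QCsh_\Lambda = \Ind\DCoh_\Lambda$ applies cleanly on both sides, so that the ind-coherent pushforward can be reconstructed from its restriction to coherent complexes with prescribed support.
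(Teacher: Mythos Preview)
Your proof is correct and in fact more complete than the paper's, which simply asserts that ``both assertions are immediate from \autoref{lem:supt-funct}.'' The first assertion and the ``only if'' direction of the second are indeed immediate from \autoref{lem:supt-funct}(i), exactly as you argue. For the ``if'' direction, however, one must pass from the categorical inclusion $f_*\DCoh_{\Lambda_X}(X)\subset\DCoh_{\Lambda_Y}(Y)$ back to the set-theoretic inclusion $f_*\Lambda_X\subset\Lambda_Y$, and you are right that this requires \autoref{prop: functorial support}(i) (or the underlying \cite{ag}*{Prop.~7.4.19}) together with the recovery of $\Lambda$ from $\QCsh_\Lambda$. The paper leaves this implicit.

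One small imprecision: your closing remark about the role of quasi-smoothness of $f$ is not quite accurate. The identification $\QCsh_\Lambda = \Ind\DCoh_\Lambda$ is a property of the \emph{stack} (holding for global complete intersection stacks, cf.\ \cite{ag}*{Cor.~8.2.8}), not of the morphism $f$; it is not purchased by quasi-smoothness of $f$. Your argument for the ``if'' direction uses only properness of $f$ (for \autoref{prop: functorial support}(i) and preservation of colimits) together with this compact generation on $X$. The quasi-smoothness hypothesis on $f$ in the lemma statement appears to be there for compatibility with the ambient setup of the section rather than being strictly necessary for this particular equivalence.
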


\begin{proof}
Both assertions are immediate from \autoref{lem:supt-funct}.
\end{proof}

\begin{defn} Let $X, Y$ be quasi-smooth stacks, and $\Lambda_X\in \Con X, \Lambda_Y\in \Con Y$.

Define a \demph{strict map of pairs  $f\colon(X,\Lambda_X)\to (Y,\Lambda_Y)$} to be a map 
$
f\colon X \to  Y 
$ such that 
$$
\xymatrix{ 
(df^*)^{-1}( \Lambda_X) = X \times_Y \Lambda_Y
}$$

In this case, we say ``the $f$-preimage of  $\Lambda_Y$ is precisely  $\Lambda_X$".

\end{defn}

\begin{remark}
If $f \colon X\to Y$ is quasi-smooth,  so that $df^*$ is a closed immersion, then 
$f \colon (X,\Lambda_X)\to (Y,\Lambda_Y)$ is a strict map of pairs if and only if
 $$df^*(X \times_Y \SuppSp_Y) \cap \Lambda_X = df^* (X \times_Y \Lambda_Y)
 $$
With our previous notation, this  can be rephrased in the form
 $$f^!\SuppSp_Y \cap \Lambda_X = f^!\Lambda_Y
 $$
\end{remark}

In practice, the above definition is too restrictive.

\begin{defn} Let $X, Y$ be quasi-smooth stacks, and $\Lambda_X, \Lambda'_X \in \Con X, \Lambda_Y\in \Con Y$. 

A map $f \colon X\to Y$ is said to be a \demph{strict map of pairs  $f \colon (X,\Lambda_X)\to (Y,\Lambda_Y)$ along $\Lambda'_X $}
if we have 
$$
\xymatrix{
 (df^*)^{-1}( \Lambda_X\cap \Lambda'_X) =  (df^*)^{-1}(  \Lambda'_X)\cap( X \times_Y \Lambda_Y)
}$$

In this case, we say ``along $\Lambda'_X$, the $f$-preimage of  $\Lambda_Y$ is precisely  $\Lambda_X$".

\end{defn}

  \begin{remark}\label{rem:strictness}
If $f \colon X\to Y$ is quasi-smooth, so that $df^*$ is a closed immersion, then  $f \colon (X,\Lambda_X)\to (Y,\Lambda_Y)$ 
is a strict map of pairs  along $\Lambda'_X$ if and only if
$$
df^*(X \times_Y \SuppSp_Y) \cap \Lambda_X\cap \Lambda'_X  = df^*(X \times_Y \Lambda_Y) \cap \Lambda'_X 
$$
With our previous notation, this  can be rephrased in the form
 $$f^!\SuppSp_Y \cap \Lambda_X \cap \Lambda'_X= f^!\Lambda_Y\cap\Lambda_X'
 $$
If in addition $f \colon X\to Y$ is already known to be a map of pairs  $f \colon (X,\Lambda_X)\to (Y,\Lambda_Y)$,
so that
 $$
 f^!\SuppSp_Y \cap \Lambda_X = df^*(X \times_Y \SuppSp_Y) \cap \Lambda_X \subset df^* (X \times_Y \Lambda_Y) = f^!\Lambda_Y
 $$
   then it is strict along $\Lambda_X'$ if and only if 
$$
  \Lambda_X\supset  df^*(X \times_Y \Lambda_Y) \cap \Lambda'_X =   f^!\Lambda_Y \cap \Lambda'_X 
 $$

\end{remark}


\subsection{Base-change with support}

\begin{lemma}\label{lem:push-supt} 
Let $X, Y$ be quasi-smooth stacks, and $\Lambda_X, \Lambda'_X \in \Con X, \Lambda_Y\in \Con Y$.

Suppose that $f: (X,\Lambda_X) \to (Y,\Lambda_Y)$ is a quasi-smooth map of pairs.

Then there is a natural morphism 
  $$
  \xymatrix{
  f_* \circ R\Gamma_{\Lambda_X} \ar[r] & R\Gamma_{\Lambda_Y} \circ f_* %
  }
  $$
  of functors  $\QC^!(X)\to \QC^!_{\Lambda_Y}(Y)$.
  
  Furthermore, if $f: (X,\Lambda_X) \to (Y,\Lambda_Y)$ is strict  along $\Lambda'_X$, then the  above morphism is
  an equivalence when restricted to the full subcategory 
  $$\QCsh_{\Lambda'_X}(X) \subset \QCsh(X)$$

\end{lemma}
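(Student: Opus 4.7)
First, I would apply $f_*$ to the counit $i_{\Lambda_X}\RGamma_{\Lambda_X}\to \id$ of the adjunction $(i_{\Lambda_X},\RGamma_{\Lambda_X})$, producing $f_* i_{\Lambda_X}\RGamma_{\Lambda_X}\to f_*$. Because $f$ is a map of pairs, Lemma~\ref{lem:supt-funct}(i) places the essential image of $f_* i_{\Lambda_X}\RGamma_{\Lambda_X}$ inside $\QCsh_{\Lambda_Y}(Y)$. Adjointness of $(i_{\Lambda_Y},\RGamma_{\Lambda_Y})$ then promotes this to the desired natural transformation $\eta\colon f_*\RGamma_{\Lambda_X}\to \RGamma_{\Lambda_Y}f_*$.

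\textbf{Step 2: reducing the equivalence to a vanishing.} For $\F\in \QCsh_{\Lambda'_X}(X)$, Lemma~\ref{lem:multi-local} identifies $\RGamma_{\Lambda_X}\F\simeq \RGamma_{\Lambda_X\cap\Lambda'_X}\F$. Set $C=\cofib(\RGamma_{\Lambda_X\cap\Lambda'_X}\F\to \F)$; since both of its sources lie in $\QCsh_{\Lambda'_X}(X)$, so does $C$, and the idempotency of $\RGamma_{\Lambda_X\cap\Lambda'_X}$ forces $\RGamma_{\Lambda_X\cap\Lambda'_X}C=0$. Applying $\RGamma_{\Lambda_Y}\circ f_*$ to the fiber sequence $\RGamma_{\Lambda_X\cap\Lambda'_X}\F\to \F\to C$, and noting that $f_*\RGamma_{\Lambda_X\cap\Lambda'_X}\F$ already has support in $\Lambda_Y$ and so is fixed by $\RGamma_{\Lambda_Y}$, I would identify the first arrow of the resulting sequence with $\eta_\F$. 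Hence the problem reduces to showing $\RGamma_{\Lambda_Y}f_*C=0$.

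\textbf{Step 3: the vanishing via strictness.} Using $\QCsh_{\Lambda'_X}(X)\simeq \Ind\DCoh_{\Lambda'_X}(X)$, I would write $\F=\colim_\alpha \F_\alpha$ with $\F_\alpha\in \DCoh_{\Lambda'_X}(X)$ compact. Since $\RGamma_{\Lambda_X\cap\Lambda'_X}$ preserves filtered colimits, one gets $C\simeq \colim_\alpha C_\alpha$ with $C_\alpha\in \DCoh_{\Lambda'_X}(X)$ and $\RGamma_{\Lambda_X\cap\Lambda'_X}C_\alpha=0$. For coherent $C_\alpha$, this acyclicity is equivalent to $\supp C_\alpha \cap (\Lambda_X\cap\Lambda'_X)=\emptyset$; combined with $\supp C_\alpha\subset \Lambda'_X$, this yields $\supp C_\alpha\cap \Lambda_X=\emptyset$. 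The strictness identity of Remark~\ref{rem:strictness},
\[
(df^*)^{-1}(\Lambda_X\cap\Lambda'_X) = (df^*)^{-1}(\Lambda'_X)\cap (X\times_Y\Lambda_Y),
\]
then forces $(df^*)^{-1}(\supp C_\alpha)\cap (X\times_Y\Lambda_Y)=\emptyset$, so by Lemma~\ref{lem:supt-funct}(i) $\supp f_*C_\alpha\cap \Lambda_Y=\emptyset$ and $\RGamma_{\Lambda_Y}f_*C_\alpha=0$. Commuting $\RGamma_{\Lambda_Y}$ past the filtered colimit gives the desired vanishing.

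\textbf{Anticipated obstacle.} The crux is the translation, for a coherent object $C_\alpha$, between $\RGamma_{\Lambda_X\cap\Lambda'_X}$-acyclicity and set-theoretic disjointness of singular support from $\Lambda_X\cap\Lambda'_X$. I would rely on the fact, established in~\cite{ag}, that $\DCoh_\Lambda(X)$ coincides with the coherent objects whose singular support is contained in $\Lambda$, together with the standard characterization of $\RGamma_\Lambda$-acyclicity as disjointness of singular support from $\Lambda$ in this coherent setting. Granted this dictionary, the remaining arguments are formal consequences of the adjunction structure and the set-theoretic content of the strictness hypothesis.
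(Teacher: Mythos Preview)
Your Step 1 and the reduction in Step 2 are fine and match the paper. The problem is Step 3, where there is a genuine gap.

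The claim that $C_\alpha\in\DCoh_{\Lambda'_X}(X)$ is unjustified and in fact typically false: $\RGamma_{\Lambda_X\cap\Lambda'_X}\F_\alpha$ is almost never coherent (already in the classical case $\RGamma_{\{0\}}\O_{\AA^1}$ has infinite-dimensional cohomology), so the cofiber $C_\alpha$ is not coherent either. This breaks the ``for coherent $C_\alpha$'' step on which the rest of the argument rests, and the obstacle you flag about translating $\RGamma$-acyclicity into set-theoretic disjointness never gets off the ground. Relatedly, you invoke $\QCsh_{\Lambda'_X}(X)\simeq\Ind\DCoh_{\Lambda'_X}(X)$, which is not among the hypotheses of the lemma (the paper only records it for global complete intersection stacks). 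Finally, notice that your argument never uses the quasi-smoothness of $f$; since this hypothesis is essential, its absence is a signal that something is missing.

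The paper's proof of the vanishing $\RGamma_{\Lambda_Y}f_*C=0$ is different and avoids all of these issues. Rather than decomposing $C$, it tests $f_*C$ against objects $\K\in\DCoh_{\Lambda_Y}(Y)$ and uses the adjunction $(f^*,f_*)$ on $\QCsh$, which exists precisely because $f$ is quasi-smooth. Then $f^*\K$ lands in $\QCsh_{f^!\Lambda_Y}(X)$ by \autoref{lem:supt-funct}, and one computes
\[
\RHom(f^*\K,C)\simeq\RHom(f^*\K,\RGamma_{f^!\Lambda_Y}\RGamma_{\Lambda'_X}C)\simeq\RHom(f^*\K,\RGamma_{f^!\Lambda_Y\cap\Lambda'_X}C)
\]
using \autoref{lem:multi-local}. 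Strictness along $\Lambda'_X$ gives $f^!\Lambda_Y\cap\Lambda'_X\subset\Lambda_X\cap\Lambda'_X$, and since $\RGamma_{\Lambda_X\cap\Lambda'_X}C=0$ one concludes. No coherence of $C$, no compact generation, and no support--acyclicity dictionary are needed.
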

\begin{proof} First, from the counit $i_{\Lambda_X} \circ R\Gamma_{\Lambda_X}\to 1$, we obtain a map  $f_* \circ R\Gamma_{\Lambda_X} \to f_*$.  Since $f$ is a map of pairs,  $f_* \circ R\Gamma_{\Lambda_X}$ lands in the $R\Gamma_{\Lambda_Y}$-local objects.
Thus  $f_* \circ R\Gamma_{\Lambda_X} \to f_*$ factors through $R\Gamma_{\Lambda_Y} \circ f_*$.

Now assume  $f$ is strict along $\Lambda'_X$.
  We must show that the map  
  $$
    \xymatrix{
    f_* \circ \RGamma_{\Lambda_X} \ar[r] & \RGamma_{\Lambda_Y} \circ f_* 
    }$$
    is an equivalence on $\QCsh_{\Lambda'_X}(X)$.  
    
    
    Suppose $\F \in \QCsh_{\Lambda'_X}(X)$.  We must show that the natural map
     $$
    \xymatrix{
    f_* \circ \RGamma_{\Lambda_X} \F \ar[r] &   f_* \F
    }$$
    is an $R\Gamma_{\Lambda_Y}$-equivalence.
  Equivalently, we must show that the induced map 
  $$
  \xymatrix{
  \RHom_{\QCsh(Y)}(\K, f_* \RGamma_{\Lambda_X} \F) \ar[r] &  \RHom_{\QCsh(Y)}(\K, f_* \F) 
}  $$
  is an equivalence for all $\K \in \DCoh_{\Lambda_Y}(Y)$.  
  
  Since $f$ is quasi-smooth, $f^*$ exists and is left adjoint to $f_*$, so the above is equivalent to showing that
  $$
  \xymatrix{
   \RHom_{\QCsh(Y)}(f^* \K, R\Gamma_{\Lambda_X} \F) \ar[r] & \RHom_{\QCsh(Y)}(f^* \K, \F) 
  }$$
  is an equivalence.  
  
  By \autoref{lem:supt-funct} and the comparison between $f^*$ and $f^!$ for quasi-smooth maps (thanks to the fact that $f$ is quasi-smooth hence Gorenstein), 
  we have $f^* \K \in \QCsh_{f^!\Lambda_Y}(X)$, and thus the above map is equivalent to a map
  $$ 
\xymatrix{
  \RHom_{\QCsh(Y)}(f^* \K, R\Gamma_{f^!\Lambda_Y} R\Gamma_{\Lambda_X} \F) \ar[r] & \RHom_{\QCsh(Y)}(f^* \K, R\Gamma_{f^!\Lambda_Y} \F) 
}  $$

  Finally, since $\F$ is already $R\Gamma_{\Lambda'_X}$-local, by \autoref{lem:multi-local}, we have that
  $$
  \xymatrix{
   R\Gamma_{f^!\Lambda_Y} R\Gamma_{\Lambda_X} \F \simeq
    R\Gamma_{f^!\Lambda_Y} R\Gamma_{\Lambda_X} R\Gamma_{\Lambda_X'}  \F \simeq
     R\Gamma_{f^!\Lambda_Y \cap \Lambda_X \cap \Lambda'_X} \F 
  }
  $$
  The strictness of $f$ along $\Lambda'_X$ precisely guarantees that 
  \[
f^!\Lambda_Y \cap \Lambda_X \cap \Lambda'_X   =   f^!\Lambda_Y \cap \Lambda'_X 
  \qedhere
  \]
\end{proof}


\begin{defn}
Suppose $f\colon X\to Y$ is a map  of quasi-smooth stacks.  

Fix $\Lambda_X\in \Con X$, $\Lambda_Y\in \Con Y$, and define \demph{functors with support conditions}
$$
  \xymatrix{
  \ff_* \colon \QCsh_{\Lambda_X}(X) \ar[r] &  \QCsh_{\Lambda_Y}(Y) & \ff^! \colon \QCsh_{\Lambda_Y}(Y) \ar[r] &  \QCsh_{\Lambda_X}(X) 
  }
  $$
  $$ 
\xymatrix{
\ff_* = R\Gamma_{\Lambda_Y} \circ f_* \circ i_{\Lambda_X}  & \ff^! = R\Gamma_{\Lambda_X} \circ f^! \circ i_{\Lambda_Y}
  }
  $$
\end{defn}

\begin{remark}\label{rem:prop-pairs}

If $f:(X, \Lambda_X) \to (Y,  \Lambda_Y)$ is a map of pairs, then $f_* \QCsh_{\Lambda_X}(X) \subset  \QCsh_{\Lambda_Y}(Y)$, so we need not apply $\RGamma_{\Lambda_Y}$ in the definition of $\ff_*$, or in other words $\ff_*\simeq f_*\circ i_{\Lambda_X}$, and hence $\ff_*$ preserves compact objects.  
Thus if in addition $f$ is  proper,  the right adjoint to $\ff_*\simeq f_*\circ i_{\Lambda_X}$ coincides with $\ff^!$.
  (Note that we still must apply $\RGamma_{\Lambda}$ in the definition of $\ff^!$ in general:  if $f$ is proper but a support condition is not satisfied, then $f^!$ need not be right adjoint to $\ff_*$.\footnote{Example: Let $f:X = Spec k \to Y = \Omega_0 \AA^1=\Spec k[B]$, with $|B|=1$. Set $\Lambda_Y = \{0\}_Y$.  Then $f_* \colon k\mod \to k[B]\mod$ is the usual pushforward with  right adjoint $f^! = \RHom_{k[B]}(k, -)$. However  $\ff^!$  is the colimit-preserving functor $\RHom_{k[B]}(k, k[B]) \otimes_{k[B]} -$.})

Similarly, suppose $f \colon (X,\Lambda_X) \to (Y,\Lambda_Y)$ and $g \colon (Y,\Lambda_Y) \to (Z,\Lambda_Z)$ are proper maps of pairs.  Then $h = g \circ f$ is also a proper map of pairs, and there is a natural equivalence $\hh_* \simeq \gg_* \circ \ff_*$. By the above discussion, this follows from functoriality of the usual pushforwards. Moreover, taking right adjoints, we obtain a natural equivalence $\hh^! \isom \ff^! \circ \gg^!$.
\end{remark}


It will be crucial for us to study ``base-change'' for these functors with support conditions.  First, we recall the following general context for discussing base change equivalences from \cite{LurieHA}*{6.2.3.13}.

\begin{defn}  Suppose given a diagram of $\infty$-categories
  \[ \xymatrix{
  \C \ar[d]_G \ar[r]^U & \D \ar[d]^{G'} \\ 
  \C'  \ar[r]_V & \D' } \]
  which commutes up to a specified equivalence 
  $$
  \xymatrix{
  \alpha \colon V \circ G \ar[r]^-\sim & G' \circ U 
  }
  $$
  
  (1) We say that the square is \demph{left adjointable} if the functors $G$ and $G'$  admit left adjoints $F$ and $F'$, and base-change holds:  the composite transformation
  \[ 
  \xymatrix{
  F' \circ V \ar[r]^-{\eta} &  F' \circ V \circ G \circ F \ar[r]^-{\alpha} &  F' \circ G' \circ U \circ F \ar[r]^-{\epsilon} & U \circ F 
  }\]
  is an equivalence, where $\eta$ and $\epsilon$ are the respective unit and counit of adjunctions.
  
  (2) Dually, the square is \demph{right adjointable} if the functors $G$ and $G'$ admit right adjoints $H$ and $H'$, and the composite transformation
  \[   \xymatrix{
U \circ H  \ar[r]^-\eta &  H' \circ G' \circ U \circ H \ar[r]^-{\alpha^{-1}} &  H' \circ V \circ G \circ H \ar[r]^-{\epsilon} &  H' \circ V 
}
\] 
  is an equivalence, where $\eta$ and $\epsilon$ are the respective unit and counit of adjunctions.
\end{defn}

\begin{defn} A {\em strict Cartesian diagram of pairs} is a Cartesian diagram of quasi-smooth stacks which is also a commutative diagram of maps of pairs
\[ \xymatrix{
  (Z = X \times_S X', \Lambda_Z) \ar[r]^-{p_2} \ar[d]_{p_1} &  (X', \Lambda_{X'}) \ar[d]^{q} \\
  (X, \Lambda_X) \ar[r]_{p} & (Y, \Lambda_Y) } \]
satisfying the strictness condition
$$\Lambda_Z \supset p_1^! \Lambda_X \cap p_2^! \Lambda_{X'}$$
\end{defn}

\begin{remark} If $p_1$ and $p_2$ are in addition assumed quasi-smooth, then by \autoref{rem:strictness} the strictness condition
$\Lambda_Z \supset p_1^! \Lambda_X \cap p_2^! \Lambda_{X'}$ above is equivalent to any of the following:

\begin{itemize}
\item $p_1$ is strict along $p_2^!\Lambda_{X'}$;
\item $p_2$ is strict along $p_1^! \Lambda_{X}$.
\end{itemize}

\end{remark}

Proper base-change can be worded as a right adjointability condition.

\begin{prop}\label{prop:adjt-supt-R}  Consider a strict Cartesian diagram of pairs
\[ \xymatrix{
  (Z = X \times_S X', \Lambda_Z) \ar[r]^-{p_2} \ar[d]_{p_1} &  (X', \Lambda_{X'}) \ar[d]^{q} \\
  (X, \Lambda_X) \ar[r]_{p} & (Y, \Lambda_Y) } \]

Assume:
\begin{itemize} 
\item
  $p$ is proper (and consequently so is $p_2$).
\item $p_1$ is quasi-smooth.
\end{itemize}

Then:
\begin{enumerate}
    \item  We have adjunctions
    $$
    \xymatrix{
    (\pp_* =R\Gamma_{\Lambda_Y}\circ p_*, \pp^! = \RGamma_{\Lambda_X} \circ p^!) &
    (\pp_{2*} =R\Gamma_{\Lambda_{X'}}\circ p_{2*}, \pp_2^! = \RGamma_{\Lambda_Z} \circ p_2^!)
    }$$
   
      \item The diagram of pushforwards
\[  
      \xymatrix{
      \ar[r]^{\pp_{2*}} \ar[d]_{\pp_{1*}} \QCsh_{\Lambda_Z}(Z) & \QCsh_{\Lambda_{X'}}(X') \ar[d]^{\qq_*}  \\
       \ar[r]_{\pp_*} \QCsh_{\Lambda_X}(X) &  \QCsh_{\Lambda_Y}(Y)  } \]
      admits a natural equivalence  
      $$
      \xymatrix{
      \pp_* \circ \pp_{1*} \simeq   \pp_{2*} \circ \qq_*
      }
      $$
 and     is right adjointable: the resulting  base-change 
      morphism
      \[ 
      \xymatrix{
      \pp_{1*} \pp_2^! \ar[r] &  \pp^! \circ \qq_* 
      }
      \]
      is an equivalence.
    \end{enumerate}
  \end{prop}
  
  \begin{proof}
         Point (i) and the functoriality equivalence of point (ii) are immediate from  \autoref{rem:prop-pairs}.         The adjointability morphism of point (ii) is the composite
       \[ 
       \xymatrix{
       \pp_{1*} \circ \pp_2^! \simeq p_{1*} \circ \RGamma_{\Lambda_Z} \circ p_2^! \ar[r] &  \RGamma_{\Lambda_X} \circ p_{1*} \circ p_2^! \ar[r] & \RGamma_{\Lambda_X} \circ p^! \circ q_* \simeq  \pp^! \circ \qq_*
       } 
       \]
       Note that the second arrow is an equivalence by the usual base-change theorem for $\QCsh$.  To see that the first arrow is an equivalence we apply \autoref{lem:push-supt} as follows:

       Note that the essential image of $p_2^!$ on $\QCsh_{\Lambda_{X'}}(X')$ lies in $\QCsh_{p_2^! \Lambda_{X'}}(Z)$ by \autoref{lem:supt-funct}.  Since $p_2$ is not assumed quasi-smooth, $p_2^! \Lambda_{X'}$ need not be closed and by $\QCsh_{p_2^! \Lambda_{X'}}(Z)$ we mean the subcategory of $\QCsh(Z)$ generated under colimits by all coherent complexes on $Z$ whose microsupport is contained is a conical closed subset contained in $p_2^! \Lambda_{X'}$.   Thus it is enough to show that the natural morphism
       \[ p_{1*} \circ \RGamma_{\Lambda Z} \longrightarrow \RGamma_{\Lambda_X} \circ p_{1*} \]
       is an equivalence on $\QCsh_{\Lambda'_Z}(Z)$ for each $\Lambda'_Z \in \Con Z$ contained in $p_2^! \Lambda_{X'}$.

       Since the diagam is a strict Cartesian diagram of pairs and $p_1$ is quasi-smooth, \autoref{rem:strictness} implies that $p_1$ is strict along $p_2^! \Lambda_{X'}$ and thus along each such $\Lambda'_Z$.  \autoref{lem:push-supt} now completes the proof.\footnote{The reader can note that we needed slightly less than a strict Cartesian diagram.  The strictness is equivalent to $p_1$ being strict along $p_2^! \Lambda_{X'}$, while we needed it only along the union of all conical closed subsets of $p_2^! \Lambda_{X'}$.  If, as in our examples, $p_2$ is also quasi-smooth then this distinction disappears.}
  \end{proof}

With slightly more stringent conditions, we can interpret the (dual) base-change equivalence as an adjointability statement for the diagram of $!$-pullbacks (instead of pushfowards).

\begin{prop}\label{prop:adjt-supt-L}  Consider a strict Cartesian diagram of pairs
  \[ \xymatrix{
  (Z = X \times_S X', \Lambda_Z) \ar[r]^-{p_2} \ar[d]_{p_1} &  (X', \Lambda_{X'}) \ar[d]^{q} \\
  (X, \Lambda_X) \ar[r]_{p} & (Y, \Lambda_Y) } \]

Assume:
\begin{itemize} 
\item
$p$ and $q$ are proper (and consequently so are $p_1$ and $p_2$).
\item
  $p_2$ is quasi-smooth.

\end{itemize}

  Then:
  \begin{enumerate}
    \item   
    We have adjunctions
    $$
    \xymatrix{
    (\pp_* =R\Gamma_{\Lambda_Y}\circ p_*, \pp^! = \RGamma_X \circ p^!) &
    (\qq_{*} =R\Gamma_{\Lambda_{Y}}\circ q_{*}, \qq^! = \RGamma_{\Lambda'_X} \circ q^!)
    }$$
    $$
    \xymatrix{
    (\pp_{1*} =R\Gamma_{\Lambda_X}\circ p_{1*}, \pp_1^! = \RGamma_{\Lambda_Z} \circ p_1^!) &
    (\pp_{2*} =R\Gamma_{\Lambda_{X'}}\circ p_{2*}, \pp_2^! = \RGamma_{\Lambda_Z} \circ p_2^!)
    }$$   
    
     \item The diagram of pushforwards
\[  
      \xymatrix{
      \ar[r]^{\pp_{2*}} \ar[d]_{\pp_{1*}} \QCsh_{\Lambda_Z}(Z) & \QCsh_{\Lambda_{X'}}(X') \ar[d]^{\qq_*}  \\
       \ar[r]_{\pp_*} \QCsh_{\Lambda_X}(X) &  \QCsh_{\Lambda_Y}(Y)  } \]
      admits a natural equivalence  
      $$
      \xymatrix{
      \pp_* \circ \pp_{1*} \simeq   \pp_{2*} \circ \qq_*
      }
      $$
    \item The diagram of pullbacks
      \[  
      \xymatrix{
      \QCsh_{\Lambda_Y}(Y) \ar[d]_{\qq^!} \ar[r]^{\pp^!} & \QCsh_{\Lambda_X}(X) \ar[d]^{\pp_1^!} \\
      \QCsh_{\Lambda_{X'}}(X') \ar[r]_{\pp_2^!} & \QCsh_{\Lambda_Z}(Z) } \]
       admits  a natural equivalence 
       $$
       \pp_1^! \circ \pp^! \isom \qq^! \circ \pp_2^!
       $$
and is left adjointable: the resulting base-change 
      morphism
      \[ \xymatrix{
      \pp_{2*} \circ \pp_1^! \ar[r]^-\sim & \qq^! \circ \pp_* 
      }\]
      is an equivalence.
  \end{enumerate}
\end{prop}

\begin{proof} Points (i) and (ii) are immediate from \autoref{rem:prop-pairs}.
The functoriality equivalence of point (iii) then results from taking right adjoints.

The adjointability equivalence of point (iii) is  the composite
  \[ 
  \xymatrix{
   \pp_{2*} \circ \pp_1^!  \simeq p_{2*} \circ \RGamma_{\Lambda_Z} \circ p_1^! \ar[r]^-{\sim} & \RGamma_{\Lambda_{X'}} \circ p_{2*} \circ p_1^! \ar[r]^-\sim & \RGamma_{\Lambda_{X'}} \circ q^! \circ p_*  \simeq \qq^! \circ \pp_* 
  }
  \]
  The second arrow is an equivalence by base-change for $\QCsh$; the first arrow is an equivalence by applying \autoref{lem:push-supt} to $p_2$, which is quasi-smooth and strict along each conical closed subset contained in $p_1^! \Lambda_{X}$, analogous to the argument in \autoref{prop:adjt-supt-R}.
\end{proof}



\subsection{Descent with support}

Let $\Delta$ denote the  simplex category of non-empty totally ordered finite sets $[n] =\{0\to 1\to \cdots\to n\}$, and  $\Delta_+$  the augmented simplex category of (possibly empty) totally ordered finite sets, so in other words $\Delta$ adjoined the initial object given by the empty set $[-1]=\emptyset$. 

Recall that a  simplicial object or diagram of a category $\C$ is a functor $\Delta^{op}\to \C$, traditionally denoted by $X_\bullet$, where we understand $X_n \in \C$ to be the value of the functor on $[n]$.  An augmented simplicial object  is a functor $\Delta^{op}_+\to \C$, traditionally denoted by $X_\bullet \to X_{-1}$, where we understand $X_{-1} \in \C$ to be the value of the functor on  $[-1] $.

Recall that  in $\Delta_+$ the injections $[n]\to [n+1]$ (resp. surjections $[n+1]\to [n]$), and the induced maps $X_{n+1}\to X_{n}$
(resp. $X_{n}\to X_{n+1}$) of an augmented simplicial object, are called the face (resp. degeneracy) maps. In particular, we have the distinguished face map $d_0:X_{n+1}\to X_n$ induced by the  injection $[n]\to [n+1]$ whose image does not contain $0\in [n+1]$.

\begin{theorem}\label{prop:descent-generic} Suppose $f \colon (X_\bullet, \Lambda_\bullet) \to (X_{-1}, \Lambda_{-1})$ is an augmented simplicial diagram of maps of  pairs with all stacks quasi-smooth and maps proper.  Suppose further that:
  \begin{enumerate}
      \item The face maps are quasi-smooth.
       \item For any map $g:[m] \to [n]$ in $\Delta_+$, the induced commutative square
 \[
\xymatrix{
(X_{n+1},\Lambda_{n+1}) \ar[d]_-{\tilde g} \ar[r]^-{d_0}& (X_{n},\Lambda_{n}) \ar[d]^-{g}  \\
(X_{m+1}, \Lambda_{m+1})   \ar[r]^-{d_0}& (X_{m},\Lambda_{m}) } \]
is a strict Cartesian diagram of pairs.

      \item
      Pullback along the augmentation
      $$
      \xymatrix{
      \ff^! \colon \QCsh_{\Lambda_{-1}}(X_{-1}) \ar[r] & \QCsh_{\Lambda_0}(X_0)
      }
      $$  is conservative. 
  \end{enumerate}
  
  Then the augmentation provides an equivalence with the totalization of the cosimplicial category furnished by $!$-pullbacks with support conditions
  \[  \xymatrix{
  \QCsh_{\Lambda_{-1}}(X_{-1}) \ar[r]^-{\sim} &  \Tot\{ \QCsh_{\Lambda_\bullet}(X_\bullet), \ff_\bullet^! \} } \]
\end{theorem}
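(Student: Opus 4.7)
The plan is to apply the $\infty$-categorical descent formalism of \cite{LurieHA}*{\S 4.7, \S 6.2}, with the base-change results above supplying the Beck-Chevalley identities at the level of functors with support.

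First, each face map $\ff$ in the augmented simplicial diagram is a proper, quasi-smooth map of pairs, so by \autoref{rem:prop-pairs} it gives rise to an adjunction $\ff_* \dashv \ff^!$ between the corresponding categories with prescribed support. Applied term by term, the augmented simplicial object of pairs yields a cosimplicial diagram of adjunctions whose coface maps are the $\ff_\bullet^!$, together with the coaugmentation $\ff^!\colon \QCsh_{\Lambda_{-1}}(X_{-1}) \to \QCsh_{\Lambda_0}(X_0)$.

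Next, I verify the Beck-Chevalley identities required by the descent theorem. For each map $g\colon [m]\to[n]$ in $\Delta_+$, hypothesis (ii) makes the associated square a strict Cartesian diagram of pairs, with all four maps proper and with the vertical map (a base-change of a face map) quasi-smooth by hypothesis (i). \autoref{prop:adjt-supt-L} then supplies the left-adjointability identity $d_{0,*}\circ \tilde{g}^! \simeq g^! \circ d_{0,*}$ for the corresponding square of functors with support. Assembling these identities for varying $g$ and invoking \cite{LurieHA}*{Corollary 6.2.4.3}, one identifies the cosimplicial limit $\Tot\{\QCsh_{\Lambda_\bullet}(X_\bullet),\ff_\bullet^!\}$ with the $\infty$-category of modules over the monad $\ff^!\ff_*$ on $\QCsh_{\Lambda_0}(X_0)$. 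Simultaneously, by the Barr-Beck-Lurie theorem \cite{LurieHA}*{Theorem 4.7.3.5}, the conservativity of $\ff^!$ given by hypothesis (iii), together with the continuity properties of $\ff^!$ ensured by properness and quasi-smoothness of $\ff$, promotes the comparison from $\QCsh_{\Lambda_{-1}}(X_{-1})$ into this module category to an equivalence, yielding the desired totalization formula.

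The main obstacle is the Beck-Chevalley step: the $\RGamma_\Lambda$ cutoffs built into $\ff_*$ and $\ff^!$ do not in general commute with base change across Cartesian squares, and the naive pushforward-pullback identity fails at the level of functors with support. The strictness condition on Beck-Chevalley squares of pairs is precisely the geometric input required to let these cutoffs be absorbed across the square, as in the proofs of \autoref{lem:push-supt} and \autoref{prop:adjt-supt-L}; without strictness, the cosimplicial diagram does not assemble into $\QCsh_{\Lambda_{-1}}(X_{-1})$.
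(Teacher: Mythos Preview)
Your proposal is correct and follows essentially the same route as the paper: invoke the Beck--Chevalley descent criterion \cite{LurieHA}*{Corollary~6.2.4.3} for the augmented cosimplicial category $\{\QCsh_{\Lambda_\bullet}(X_\bullet),\ff_\bullet^!\}$, with the required left-adjointability supplied by \autoref{prop:adjt-supt-L} applied to the squares of hypothesis~(ii), and conservativity given by hypothesis~(iii). One small correction: when checking the hypotheses of \autoref{prop:adjt-supt-L}, the map that must be quasi-smooth is the horizontal face map $d_0$ (it plays the role of $p_2$), not the vertical map~$\tilde g$; the vertical maps are arbitrary simplicial structure maps and need not be quasi-smooth.
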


\begin{proof} The first equivalence for the totalization is an application of the Beck-Chevalley Condition~\cite{LurieHA}*{Corollary~6.2.4.3} 
  applied to the augmented cosimplicial category 
  \[ \left\{ \QCsh_{\Lambda_\bullet}(X_\bullet), \ff_\bullet^! \right\} \]

  The left adjointability required therein is precisely obtained by applying \autoref{prop:adjt-supt-L} to the diagram appearing in condition (ii) of the theorem. By hypothesis,
  the maps of the diagram are all proper maps of pairs, $d_0$ is quasi-smooth since it is a face map, and the required strictness condition holds.
Thus by  \autoref{prop:adjt-supt-L},   we have the left adjointability of the diagram
  \[ 
  \xymatrix{ \QCsh_{\Lambda_m}(X_m)  \ar[d]_{\gg^!} \ar[r]^-{\mathfrak{d}_0^!} & \QCsh_{\Lambda_{m+1}}(X_{m+1}) \ar[d]^{\tilde \gg^!} \\
  \QCsh_{\Lambda_n}(X_n) \ar[r]_-{\mathfrak{d}_0^!} & \QCsh_{\Lambda_{n+1}}(X_{n+1}) 
  } \]
  
\end{proof}

\begin{corollary}\label{cor:descent-cpt-gen} With the assumptions of \autoref{prop:descent-generic}, suppose furthermore that each $\QCsh_{\Lambda_k}(X_k)$ is compactly generated for each $k \geq 0$.  (This holds, for instance, if each $X_k$ is a global complete intersection in the sense of \cite{ag}*{Section~8.2}.)  Then, the same holds for $k = -1$ and 
pushforward along the augmentation provides an equivalence 
\[ \xymatrix{
   \DCoh_{\Lambda_{-1}}(X_{-1}) & \ar[l]_-{\sim}   \left| \DCoh_{\Lambda_\bullet}(X_\bullet), \ff_{\bullet*} \right| 
    }\]
  \end{corollary}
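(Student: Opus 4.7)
The plan is to apply the totalization equivalence of \autoref{prop:descent-generic} and convert it into a colimit description by passing to left adjoints. Since every map in the augmented simplicial diagram is a proper map of pairs, \autoref{rem:prop-pairs} ensures that each $\ff_\bullet^!$ admits a left adjoint given by the pushforward with support $\ff_{\bullet *}$. The Beck--Chevalley condition needed to convert the $\Tot$ of right adjoints into the colimit (in $\mathbf{Pr}^L$) of their left adjoints, as in \cite{LurieHA}*{Corollary~6.2.4.3}, is precisely the left adjointability established in \autoref{prop:adjt-supt-L} and already used in the proof of \autoref{prop:descent-generic}. Thus
\[
\QCsh_{\Lambda_{-1}}(X_{-1}) \simeq \colim_{\Delta^{\mathrm{op}}}\bigl\{\QCsh_{\Lambda_\bullet}(X_\bullet),\ \ff_{\bullet*}\bigr\},
\]
the colimit being computed in $\mathbf{Pr}^L$.

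Next I claim each transition functor $\ff_{\bullet *}$ preserves compact objects. By \autoref{rem:prop-pairs}, $\ff_{\bullet *} \simeq f_{\bullet *} \circ i_{\Lambda_{\bullet+1}}$; the inclusion $i_\Lambda$ preserves compact objects and pushforward along a proper map preserves $\DCoh$. Under the compact-generation hypothesis, $\DCoh_{\Lambda_k}(X_k)$ is the subcategory of compact objects of $\QCsh_{\Lambda_k}(X_k)$ for $k \geq 0$, so the colimit above may be regarded as taking place in the full subcategory $\mathbf{Pr}^L_\omega \subset \mathbf{Pr}^L$ of compactly generated presentable categories and compact-preserving left adjoints. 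Via the standard equivalence $\Ind \colon \Cat^{\mathrm{ex,idem}}_\infty \xrightarrow{\sim} \mathbf{Pr}^L_\omega$, such colimits are computed by forming the corresponding colimit of subcategories of compact objects in small stable idempotent-complete categories.

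This simultaneously establishes that $\QCsh_{\Lambda_{-1}}(X_{-1})$ is compactly generated and, on passage to compact objects, supplies the stated equivalence
\[
\DCoh_{\Lambda_{-1}}(X_{-1}) \simeq \bigl| \DCoh_{\Lambda_\bullet}(X_\bullet),\ \ff_{\bullet *} \bigr|,
\]
with the comparison functor induced by the augmentation pushforward $\ff_*$.

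The delicate point is the identification of the abstractly-defined compact objects of the colimit with $\DCoh_{\Lambda_{-1}}(X_{-1})$ in its original definition, rather than merely an equivalent small stable category. Since $\ff_*$ sends $\DCoh_{\Lambda_0}(X_0)$ into $\DCoh_{\Lambda_{-1}}(X_{-1})$, the comparison functor lands in $\DCoh_{\Lambda_{-1}}(X_{-1})$ on compact objects; conversely, the conservativity of $\ff^!$ granted by hypothesis~(iii) of \autoref{prop:descent-generic}, together with the adjunction $(\ff_*, \ff^!)$, forces the essential image of $\ff_*$ to thickly generate $\DCoh_{\Lambda_{-1}}(X_{-1})$, completing the identification.
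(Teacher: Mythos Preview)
Your proof is correct and follows essentially the same route as the paper: pass from the totalization of right adjoints to the geometric realization of left adjoints (the paper phrases this as the anti-equivalence of $\Pr^L$ and $\Pr^R$, which is all that is needed here---no separate Beck--Chevalley check is required for this step), observe that the pushforwards preserve compact objects, and use that $\Ind$ preserves colimits to identify the colimit of the small categories with the compact objects of $\QCsh_{\Lambda_{-1}}(X_{-1})$.

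Your final ``delicate point'' paragraph is unnecessary. The compact objects of $\QCsh_{\Lambda}(X)$ are always exactly $\DCoh_{\Lambda}(X)$, since the inclusion $i_\Lambda$ is a fully faithful colimit-preserving left adjoint; this holds for $k=-1$ just as for $k\geq 0$, independently of any generation hypothesis. So once you know $\QCsh_{\Lambda_{-1}}(X_{-1}) \simeq \Ind\,|\DCoh_{\Lambda_\bullet}(X_\bullet)|$, the identification of compact objects with $\DCoh_{\Lambda_{-1}}(X_{-1})$ is immediate and does not require the separate thick-generation argument you give.
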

  \begin{proof} By the previous theorem and the anti-equivalence of $\Pr^L$ and $\Pr^R$,  the augmented simplicial diagram
  \[  \xymatrix{
    \QCsh_{\Lambda_{-1}}(X_{-1}) & \ar[l]   \left\{ \QCsh_{\Lambda_\bullet}(X_\bullet), \ff_{\bullet*}  \right\}
    }\]
    is a geometric realization diagram in $\Pr^L$.  The argument of \cite{ag}*{Corr.~8.2.8.} identifies $\DCoh_{\Lambda_k}(X_k)$ with the compact objects  of $\QCsh_{\Lambda_k}(X_k)$.
    Hence, since the structure maps are proper and maps of pairs, the corresponding pushforwards preserve compact objects.  The result now follows from the fact the colimit of small categories exists, and the formation of $\Ind$ preserves colimits and is conservative.
  \end{proof}

In our further developments and applications in subsequent sections, we will appeal  to  \autoref{prop:descent-generic} and verify its hypotheses directly. Before continuing on, let us record the following simple consequence.

\begin{corollary}\label{cor:qc-qs-descent} Suppose $S$ is quasi-smooth, and $\pi \colon X \to S$ is proper and quasi-smooth.  Then there is a natural equivalence
  \[ \QC(S) \isom \Tot\left\{ \QC(X^{\times_S \bullet+1}), f_\bullet^*\right\} \]
\end{corollary}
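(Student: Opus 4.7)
The plan is to deduce this from the general descent theorem \autoref{prop:descent-generic} by choosing the minimal support conditions $\Lambda_k = \{0\}_{X_k}$ on each term of the {\v C}ech nerve $X_k = X^{\times_S k+1}$ and $\Lambda_{-1} = \{0\}_S$. The basic identification is $\QCsh_{\{0\}}(X_k) \simeq \QC(X_k)$: indeed by \autoref{lci remark}, an object of $\DCoh(X_k)$ has singular support in the zero section if and only if it is perfect, so $\QCsh_{\{0\}}(X_k) = \Ind\DCoh_{\{0\}}(X_k) = \Ind\Perf(X_k) = \QC(X_k)$.

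Next I would verify the hypotheses of \autoref{prop:descent-generic}. Each face map $d_0 \colon X_{n+1} \to X_n$ is a base change of $\pi$, hence proper and quasi-smooth, giving (i). For (ii), the Beck-Chevalley squares are Cartesian by construction of the {\v C}ech nerve, and the strictness inequality $\Lambda_Z \supset p_1^!\Lambda_X \cap p_2^!\Lambda_{X'}$ is trivial since for any map $g$ of quasi-smooth stacks the closed immersion $dg^*$ carries the zero section to the zero section, so both sides equal $\{0\}_Z$. For (iii), since the face maps are quasi-smooth (hence Gorenstein), $f^!$ differs from $f^*$ by tensoring with a shifted line bundle $\omega_{f}$, so conservativity of $\mathfrak{f}^!$ on $\QC$ reduces to conservativity of $\pi^*$, which follows from $\pi$ being proper and surjective (a standing hypothesis one needs to read into the corollary for descent to make sense, matching the surjectivity hypotheses in the main theorems).

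Thus \autoref{prop:descent-generic} yields
\[
\QC(S) \simeq \Tot\bigl\{\QC(X^{\times_S \bullet +1}), \, f_\bullet^!\bigr\}.
\]
The final step is to replace $f_\bullet^!$ by $f_\bullet^*$. Since each face map is quasi-smooth, its relative dualizing complex is a shifted line bundle, and tensoring by the cosimplicial system of these twists intertwines the two cosimplicial diagrams. The resulting equivalence of totalizations gives the stated descent
\[
\QC(S) \simeq \Tot\bigl\{\QC(X^{\times_S \bullet +1}), \, f_\bullet^*\bigr\}.
\]

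The main technical obstacle is the conservativity in (iii): one needs surjectivity of $\pi$ for $\pi^*$ to be conservative on $\QC$, which is a hypothesis that is either implicit in the statement or follows from an understanding of what ``descent'' means here. Everything else is a direct, almost formal, application of the machinery of pairs and the fact that zero-section support is preserved by arbitrary pullback and pushforward between quasi-smooth stacks.
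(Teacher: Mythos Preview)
Your approach is essentially the same as the paper's: choose zero-section support conditions, invoke \autoref{prop:descent-generic}, and then intertwine the $f^!$- and $f^*$-diagrams via dualizing-sheaf twists. The paper phrases the last step using the absolute dualizing complexes $\omega_{X^{\times_S \bullet+1}} \otimes -$ as equivalences $\QC \to \QCsh_{\{0\}}$, whereas you use the relative dualizing line bundles of the face maps; these give the same intertwining since $f^! \circ (\omega_S \otimes -) \simeq (\omega_X \otimes -)\circ f^*$ for quasi-smooth $f$.

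Your observation about surjectivity is well taken: the paper simply asserts that the hypotheses of \autoref{prop:descent-generic} are ``readily checked'' and does not isolate the conservativity step, but you are right that it requires $\pi$ to be surjective (otherwise a skyscraper outside the image of $\pi$ kills conservativity). This hypothesis is indeed implicit and is present everywhere the corollary is actually invoked.
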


\begin{proof} 
For each $\bullet \geq -1$, we have the identification
  \[
  \xymatrix{
i_{\{0\}_{X^{\times_S \bullet+1}}}:   \QC(X^{\times_S \bullet+1}) \ar[r]^-\sim &  \QCsh_{\{0\}_{X^{\times_S \bullet+1}}}(X^{\times_S \bullet+1}) 
   }
   \]

   The left hand side furnishes the terms of a natural cosimplicial diagram with functors $f^*_\bullet $;
   the right hand side furnishes the terms of a natural cosimplicial diagram with functors $\mathfrak{f}^!_\bullet$.
   One readily checks that the latter satisfies the requirements of the preceding theorem.  Thus  it only  remains to note that the two diagrams are intertwined by the alternative equivalences
  \[ 
  \xymatrix{
  \omega_{X^{\times_S \bullet+1}} \otimes - \colon  \QC(X^{\times_S \bullet+1}) \ar[r]^-\sim & \QCsh_{\{0\}_{X^{\times_S \bullet+1}}}(X^{\times_S \bullet+1}) 
  } \qedhere
  \]
\end{proof}


\section{Centers and traces of convolution categories}\label{sect: conv}

We will calculate the center and trace categories of  functor categories with the composition monoidal tructure or equivalently integral kernel categories with the convolution monoidal structure.


\subsection{Preliminaries}

\begin{defn}
Let $\A$ be an algebra object in a symmetric monoidal $\oo$-category $\C$.

(1) The center (or Hochschild cohomology) is the morphism of bimodules object
$$
\Center(\A) = \Hom_{\A^{op} \otimes \A}(\A, \A)\in \C
$$
It comes with a natural $E_2$-monoidal structure and universal central map $\Center(\A) \to \A$.

(2) The trace (or Hochschild homology) is the tensor of bimodules object
$$
\Trace(\A) = \A \otimes_{\A^{op} \otimes \A}\A \in \C
$$
It comes with a natural $S^1$-action and universal trace map $\A\to \Trace(\A)$.

\end{defn}

\begin{remark}
We refer the reader to \cite{LurieHA}*{6.1, 5.3} for the $E_2$-structure on the center (Deligne conjecture) and $S^1$-action on the trace
(cyclic structure).
\end{remark}

%


\subsection{Convolution categories}

Let $p:X\to Y$ be a map of derived stacks, with $Y$ perfect and $p$ a relative quasi-compact separated algebraic space, so that \cite[Theorem 3.0.4]{BNP1} provides an equivalence
  $$
  \xymatrix{
\Phi:  \DCoh_{p_2-\proper}(X \times_Y X) \ar[r]^-\sim &   \Fun^{ex}_{\Perf Y}(\Perf X , \DCoh X)
}  $$
Here we write $p_2-\proper$ instead of  $\proper/X$ to distinguish the second factor so there is no ambiguity.

Assume in addition $X$ is smooth. 
Then on the one hand, $\DCoh X \simeq \Perf X$, so that the functor category of the right hand side
$$
\Fun^{ex}_{\Perf Y}(\Perf X , \DCoh X)
\simeq \Fun^{ex}_{\Perf Y}(\Perf X , \Perf X)
$$ has a natural monoidal structure given by composition of linear functors, along with a natural module $\Perf X$.
On the other hand, the diagonal $\Delta:X\to X\times X$ has finite tor-dimension, so that convolution equips the left hand side 
$  \DCoh_{p_2-\proper}(X \times_Y X)$
with a natural monoidal structure
$$
\xymatrix{
(X\times_Y X) \times (X \times_Y X) & \ar[l]_-{\delta_{23}} X\times_Y X \times_Y X \ar[r]^-{\pi_{13}} & X\times_Y X&
\F_1 * \F_2 = \pi_{13*}\delta_{23}^*(\F_1 \boxtimes \F_2)
}
$$
Moreover, convolution equips $\Perf X$ with a natural $\DCoh_{p_2-\proper}(X \times_Y X)$-module structure
$$
\xymatrix{
X\times (X\times_Y X)  & \ar[l]_-{\delta_{12}} X\times_Y X  \ar[r]^-{\pi_{3}} & X&
 \M * \F = \pi_{3*}\delta_{12}^*(\M \boxtimes \F)
}
$$
(Note that $p_2$-properness  ensures the convolution of  coherent complexes and action on  coherent complexes is well-defined.)

\begin{prop} \label{prop:matrix}

Assume $X$ is smooth.

Then the above equivalence
is naturally a monoidal equivalence
  $$
  \xymatrix{
  \Phi : \DCoh_{p_2-\proper}(X \times_Y X) \ar[r]^-\sim &   \Fun^{ex}_{\Perf Y}(\Perf X , \Perf X)
}  $$
compatibly with actions on the module $\Perf X$.
\end{prop}

\begin{proof}
Standard  base-change identities enhance the equivalence
$$
\xymatrix{
\Phi:\QC(X \times_Y X) \ar[r]^-\sim & \Fun^L_{\QC(Y)}(\QC(X), \QC(X)) }
$$
to a monoidal equivalence compatible  with the actions on the module $\QC(X)$. 
The asserted monoidal equivalence is simply the restriction to full subcategories.
\end{proof}

\begin{corollary} \label{cor: matrix} Assume $p:X\to Y$ is proper and $X$ is smooth.

Then the above equivalence 
is naturally a monoidal equivalence
  $$
  \xymatrix{
  \Phi : \DCoh(X \times_Y X) \ar[r]^-\sim &   \Fun^{ex}_{\Perf Y}(\Perf X , \Perf X)
}  $$
compatibly with actions on the module $\Perf X$.
\end{corollary}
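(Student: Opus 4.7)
The plan is to deduce this corollary directly from \autoref{prop:matrix} by observing that when $p \colon X \to Y$ is proper, the $p_2$-properness condition imposed in the statement of that proposition is automatic for every coherent sheaf on $X \times_Y X$. Once this equality of categories is established, the monoidal equivalence and the module compatibility are inherited verbatim from \autoref{prop:matrix}.

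More precisely, consider the Cartesian square
\[ \xymatrix{
X \times_Y X \ar[r]^-{p_1} \ar[d]_{p_2} & X \ar[d]^{p} \\
X \ar[r]_{p} & Y
} \]
Since properness is stable under base change, the hypothesis that $p$ is proper implies that $p_2 \colon X \times_Y X \to X$ is proper. Consequently, for any $\F \in \DCoh(X \times_Y X)$, the pushforward $p_{2*}\F$ lies in $\DCoh(X)$, i.e., $\F$ is automatically $p_2$-proper. This gives the identification
\[ \DCoh_{p_2-\proper}(X \times_Y X) = \DCoh(X \times_Y X). \]

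With this identification, the functor $\Phi$ of \autoref{prop:matrix} is literally the same functor as the one in the corollary, and the assertions about it being an equivalence, being monoidal, and being compatible with the module structure on $\Perf X$ all transport directly. The only subtle point (and the only real content beyond a bookkeeping observation) is that the previous proposition already packages precisely these compatibilities, so nothing new needs to be verified; one simply restricts the scope of the source category to the full subcategory one obtains from the automatic properness. I do not anticipate any obstacle: the argument is essentially a one-line consequence of base change for properness combined with \autoref{prop:matrix}.
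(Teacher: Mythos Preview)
Your proposal is correct and matches the paper's intent: the paper gives no separate proof for this corollary, indicating it is meant to follow immediately from \autoref{prop:matrix}, and your observation that properness of $p$ forces $p_2$ to be proper (by base change) and hence that $\DCoh_{p_2\hyphenproper}(X \times_Y X) = \DCoh(X \times_Y X)$ is exactly the missing line.
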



\subsection{Traces of convolution categories} 
Let us return to the setting of \autoref{sect: supp}. 

Assume now that $X, Y$ are smooth and $p\colon X\to Y$ is proper (and automatically quasi-smooth).

Let $LY = \Map({S^1}, Y)$ denote the loop space of $Y$.
Recall the fundamental correspondence
 $$
 \xymatrix{
X \times_Y X & \ar[l]_-\delta (X\times_Y X) \times_{X\times X} X \simeq LY\times_Y X \ar[r]^-p &     LY  
 }
 $$   
 
 Define the support condition $\Lambda_{X/Y}\in \Con LY$ to be the the pull-push of support conditions
 $$
 \Lambda_{X/Y} = p_* \delta^!\SuppSp_{X\times_Y X}
 $$

\begin{theorem}\label{thm:trace-matrix} 
Let $X, Y$ be smooth and $p\colon X\to Y$ proper,  quasi-smooth, and surjective.

There is a natural cyclic identification of the trace
  \[ 
  \xymatrix{
  \Trace(\DCoh(X \times_Y X)) \ar[r]^-\sim &   \DCoh_{\Lambda_{X/Y}} (LY) 
  }\]
\end{theorem}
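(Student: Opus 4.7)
The plan is to realize both sides of the asserted equivalence as geometric realizations of compatible simplicial categories attached to $p\colon X\to Y$, and then match them via the descent theorem \autoref{cor:descent-cpt-gen}.

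On the algebraic side, the trace is the geometric realization of the cyclic bar construction $B^{cyc}_\bullet$, with $B^{cyc}_n=\DCoh(X\times_Y X)^{\otimes_{\Perf Y}(n+1)}$ and face maps encoding convolution multiplication (including the cyclic $d_n$ that connects the last and first factors). Iterating \autoref{prop: tensor product}, each $B^{cyc}_n$ is identified with $\DCoh_{\Lambda^{alg}_n}(W_n)$ for an explicit fiber product $W_n$ of copies of $X$ over $Y$, equipped with the natural support condition produced by that proposition.

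On the geometric side, I would consider the second leg $\pi\colon LY\times_Y X\to LY$ of the fundamental correspondence $X\times_Y X\xleftarrow{\delta}LY\times_Y X\xrightarrow{\pi}LY$; since $p$ is proper, quasi-smooth, and surjective, so is $\pi$, and its \v{C}ech nerve $V_\bullet$ with $V_n\simeq LY\times_Y X^{\times_Y(n+1)}$ provides a simplicial resolution of $LY$. Equipping $V_\bullet$ with pulled-back support conditions $\Lambda^{geo}_\bullet$ coming from $\Lambda_{X/Y}$, I would verify the hypotheses of \autoref{cor:descent-cpt-gen}: quasi-smoothness of face maps and properness both follow from the corresponding properties of $p$; strict Cartesianness of the Beck--Chevalley squares follows from \autoref{rem:strictness} applied to the fiber products of quasi-smooth maps; and conservativity of the augmentation is inherited from surjectivity of $\pi$. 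This yields $\DCoh_{\Lambda_{X/Y}}(LY)\simeq |\DCoh_{\Lambda^{geo}_\bullet}(V_\bullet)|$.

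The remaining, and main, obstacle is matching the two simplicial categories term by term. Although the ambient stacks $W_n$ and $V_n$ are not identical (e.g.\ $W_n\simeq X^{\times_Y 2(n+1)}$ whereas $V_n\simeq LY\times_Y X^{\times_Y(n+1)}$), the corresponding support-constrained categories should be canonically equivalent: on the algebraic side, the face maps mix pullbacks along closed immersions with pushforwards along projections (encoding convolution), while on the geometric side they are simply \v{C}ech pullbacks. To translate between the two, I would exploit the dualizability of $\DCoh(X)$ over $\Perf(Y)$ (automatic since $p$ is smooth and proper) together with the integral transform equivalence of \autoref{prop:matrix} to reorganize the $(n+1)$-fold tensor product as a cyclic composition of correspondences that factors through $LY$. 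Matching the support conditions $\Lambda^{alg}_\bullet=\Lambda^{geo}_\bullet$ at each level via the explicit formula $\Lambda_{X/Y}=p_*\delta^!\SuppSp_{X\times_Y X}$, and verifying that the cyclic $S^1$-symmetry on the bar construction is transported to the rotation action on $LY$, constitute the technical heart of the proof.
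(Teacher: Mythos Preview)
Your overall strategy—compute the trace via a bar construction, then match it against a \v{C}ech-type simplicial resolution of $LY$ and invoke \autoref{cor:descent-cpt-gen}—is the right shape, but there are two substantive gaps that prevent the argument from going through as written.

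\medskip

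\textbf{The bar construction is the wrong one.} Your cyclic bar $B^{cyc}_n=\A^{\otimes_{\Perf Y}(n+1)}$ computes the $\Perf Y$-\emph{relative} trace $\Trace_{\Perf Y}(\A)$, not the absolute trace; for a ``matrix algebra'' like $\A\simeq\End_{\Perf Y}(\Perf X)$ this relative trace is essentially $\Perf Y$, not anything involving $LY$. The paper instead exploits the monoidal map $\Delta_*\colon\Perf X\to\A$ to view $\A$ as an algebra in $\Perf X$-\emph{bimodules}, and uses the $\Perf X$-relative bar resolution of $\A$ as an $\A$-bimodule to compute the \emph{absolute} trace:
\[
\Trace(\A)\;\simeq\;\bigl|\,\A^{\otimes_{\Perf X}(\bullet+1)}\otimes_{\Perf X\otimes\Perf X}\Perf X\,\bigr|.
\]
The payoff is that each term, after iterating \autoref{prop: tensor product}, is exactly $\DCoh_{\Lambda_n}(Z_n)$ with $Z_n=X^{\times_Y(n+2)}\times_{X^2}X\simeq LY\times_Y X^{\times_Y(n+1)}$—i.e.\ your $V_n$. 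The algebraic and geometric simplicial objects are \emph{literally the same}, so the ``matching step'' you flag as the technical heart simply disappears. Your proposed route through $W_n=X^{\times_Y 2(n+1)}$ and a dualizability argument would instead require establishing Morita-type equivalences between categories on genuinely different stacks, levelwise and compatibly with all simplicial maps; you give no mechanism for this.

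\medskip

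\textbf{The support conditions and strictness are not automatic.} On the \v{C}ech side you equip $V_\bullet$ with support conditions ``pulled back from $\Lambda_{X/Y}$'', and assert that strictness of the Beck--Chevalley squares follows formally from quasi-smoothness via \autoref{rem:strictness}. Neither holds. The correct support conditions are the ones forced by the tensor-product identification, namely $\Lambda_n=q_n^!(\SuppSp W_n)$ for the ``break the necklace'' map $q_n\colon Z_n\to(X\times_Y X)^{n+1}$; these are \emph{not} pullbacks of $\Lambda_{X/Y}$ along the augmentation (indeed $\Lambda_{X/Y}$ is \emph{defined} as $f_*\Lambda_0$). Verifying that the Beck--Chevalley squares are strict Cartesian for \emph{these} $\Lambda_n$ is the genuine technical core of the proof: it requires an explicit fiberwise description of $\SuppSp Z_n$ and $\Lambda_n$ in terms of tuples $(v_0,\ldots,v_n)\in(\Omega_Y|_y)^{\oplus(n+1)}$ subject to vanishing conditions under $dp^*$, followed by a direct check of the containment $d_0^!\Lambda_n\cap\tilde g^!\Lambda_{m+1}\subset\Lambda_{n+1}$ (first for face maps, then reduced to that case for general simplicial maps). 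This occupies the bulk of the paper's proof; it is not a consequence of the maps being quasi-smooth.
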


\begin{proof} For notational convenience, set $\A = \DCoh(X \times_Y X)$ and $\B = \Perf X$.  Observe that pushforward along the relative diagonal $\Delta_* \colon \B \to \A$ is monoidal, and thus we may regard $\A$ as an algebra in $\B$-bimodules. 

Given an algebra $\A$ in $\B$-bimodules, we have its relative bar resolution
  \[ 
  \xymatrix{
  \A \isom 
  \left| \A^{\otimes_\B (\bullet+2)} \right| 
  }\]
   which can be used to calculate its trace
  \[   \xymatrix{
  \A \otimes_{\A \otimes \A} \A = \left| \A^{\otimes_\B (\bullet+2)} \right| \otimes_{\A \otimes \A} \A = \left| \A^{\otimes_\B (\bullet+1)} \otimes_{\B \otimes \B} \B \right| 
}  \]
We will access the trace as the geometric realization of the simplicial object 
$$
\C_\bullet =  \A^{\otimes_\B (\bullet+1)} \otimes_{\B \otimes \B} \B
$$

Unwinding the notation and using the canonical identity $\Perf (X)^{\otimes k} \simeq \Perf(X^k)$,  we find the simplicial category
\begin{equation} \label{eqn:C_bullet-tensor}.
\xymatrix{
\C_\bullet = \DCoh(X\times_Y X)^{\otimes_{\Perf (X)}(\bullet +1)} \otimes_{\Perf (X^2)} \Perf X
}\end{equation}

Next we introduce the augmented simplicial diagram of derived stacks 
$$
\xymatrix{
Z_\bullet = X^{\times_Y \bullet +2} \times_{X^2} X \simeq X^{\times_Y \bullet+1} \times_Y LY \ar[r] & LY
}
$$
To spell this out,  identifying $[n] = \{0, \ldots, n\}$ with the $(n+1)$st roots of unity in $S^1$, we take the relative mapping space
\[ 
Z_n = \Map\left([n] \hookrightarrow  S^1, X \to Y\right)  = \Map([n], X) \times_{\Map([n], Y)} \Map(S^1, Y)
\]
The simplicial structure maps come from the cosimplicial structure of the sources $[n] \hookrightarrow S^1$.  
Colloquially speaking, a point of $Z_n$ is a necklace of $n+1$ points of $X$ whose images in $Y$ are connected 
by a cycle of paths; the simplicial structure maps come from forgetting or repeating points. 

\begin{figure}\label{fig:xs}
\includegraphics[height=8pc]{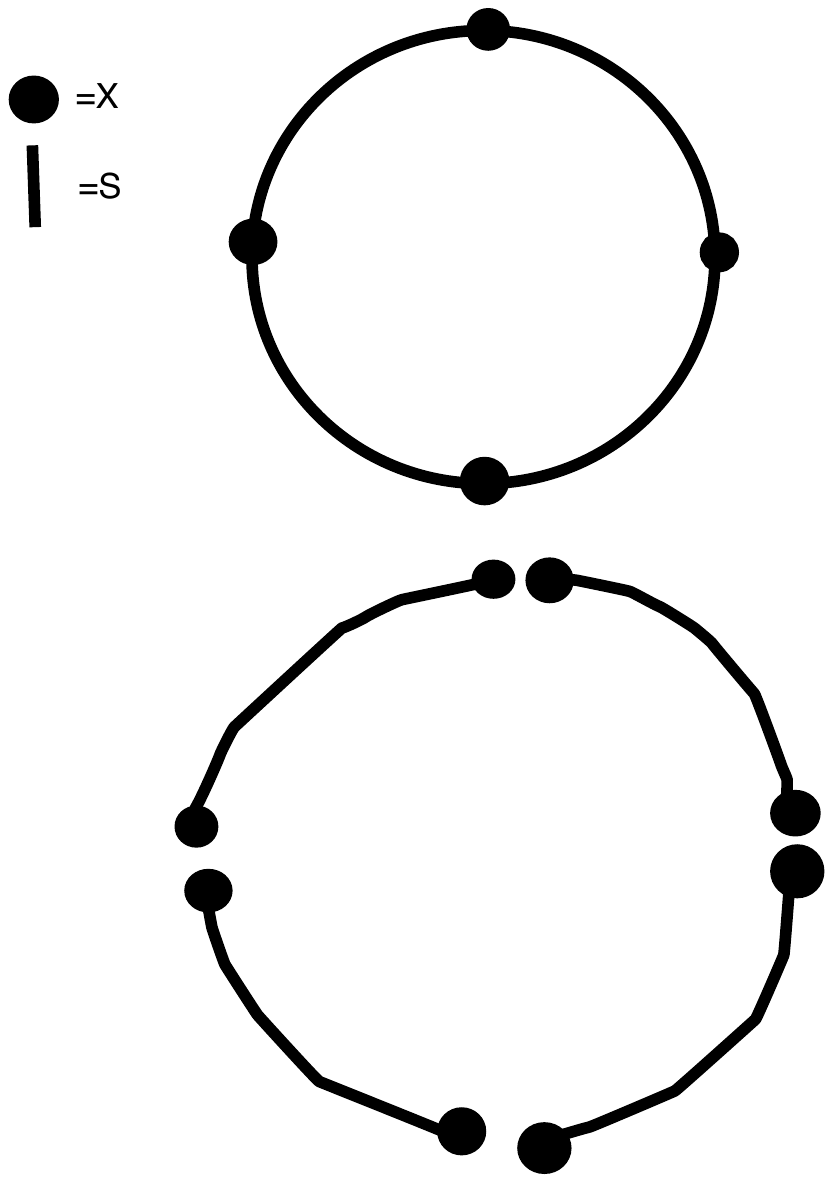}
\includegraphics[height=8pc]{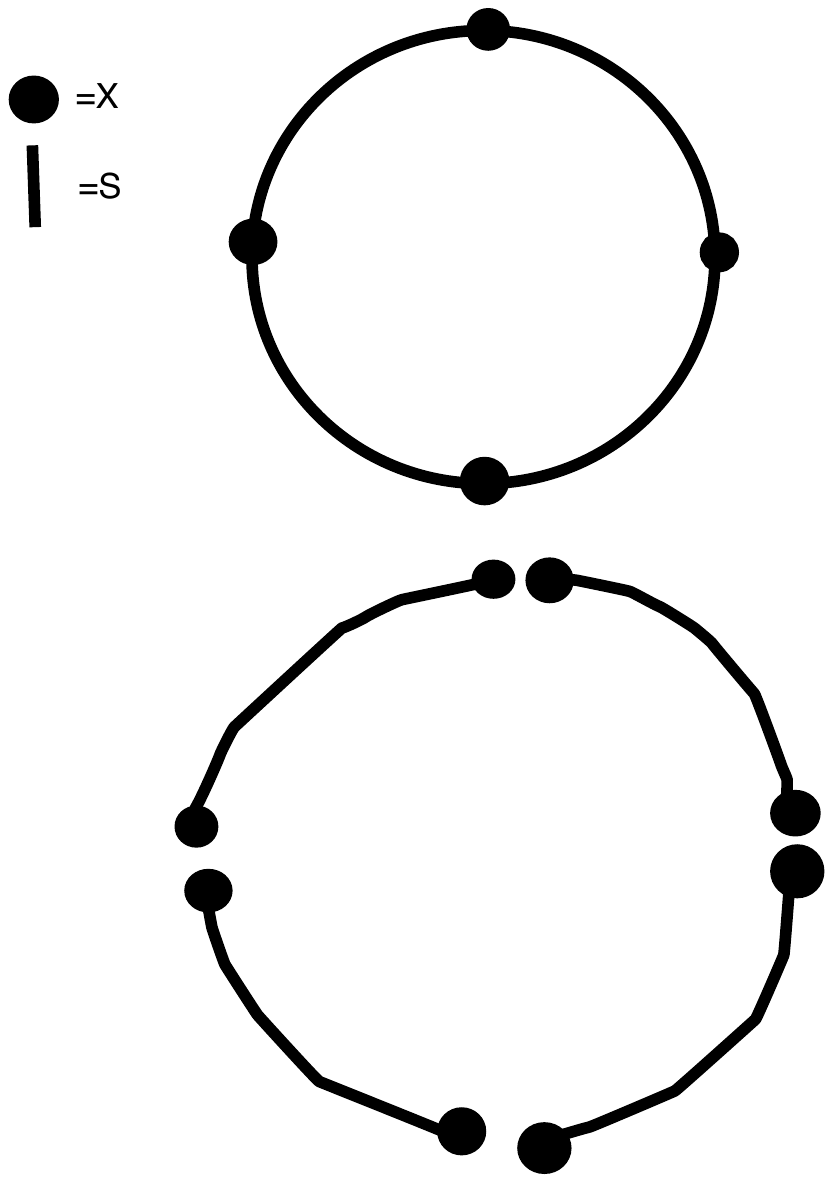}
\caption{Schematic illustration of $Z_n$ and $W_n$.}
\end{figure}

There is an evident fully faithful map of simplicial diagrams
$$
\xymatrix{
\C_\bullet \ar[r] &  \DCoh(Z_\bullet)
}$$
where the simplicial structure maps of the latter are pushforwards.
To identify the essential image, 
introduce the  natural level-wise maps 
$$
\xymatrix{
q_n:Z_n = X^{\times_Y (n +2)} \times_{X^2} X \simeq (X \times_Y X)^{\times_X (n +1)} \times_{X^2} X \ar[r] & W_n = (X\times_Y X)^{\times (n +1)}
}
$$
obtained by taking relative diagonals, or colloquially speaking, breaking apart necklaces.
Then by repeated application of \autoref{prop: tensor product}, we have an identification of the essential image
$$
\xymatrix{
\C_n \simeq \DCoh_{\Lambda_n}(Z_n) & \Lambda_n = q_n^! (\SuppSp W_n)
}$$
Thus we obtain an identification of simplicial diagrams
$$
\xymatrix{
\C_\bullet \ar[r]^-\sim &  \DCoh_{\Lambda_\bullet}(Z_\bullet)
}$$

Now it remains to verify the hypotheses of \autoref{prop:descent-generic} and \autoref{cor:descent-cpt-gen} are satisfied for the augmented simplicial diagram 
\[ 
  \xymatrix{
(Z_\bullet, \Lambda_\bullet) \ar[r] &    (LY, \Lambda_{X/Y}) 
  }\]

{\em (1) Proper simplicial maps, quasi-smooth face maps, and requisite Cartesian squares.}
Leaving aside  support conditions for the moment, the augmented simplicial diagram $Z_\bullet \to LY$ is nothing more than 
the Cech nerve of the map $Z_0 = X \times_Y LY \to LY$, which in turn is a base change of the 
map $p \colon X \to Y$.
Thus the face maps are proper and quasi-smooth (since $p$ is proper and quasi-smooth), the degeneracy maps are also proper (since $p$ is representable and separated), and the 
 requisite squares are Cartesian (since $Z_\bullet \to LY $ is a Cech nerve).
 
\medskip

 {\em (2) Strictness condition.} \autoref{prop:strict-check} below verifies that the strictness condition is satisfied.

\medskip  

{\em (3) Conservativity.}  Let $p \colon Z_0 = X \times_Y LY \to LY$ be the augmentation. Note that $p$ is a representable proper map, so that applying \cite{ag}*{Prop.~7.4.19}, we are reduced to verifying that
\[ \Lambda_{X/Y} = p_* \Lambda_0 = p_* (q_0)^!\SuppSp W_0 \] But this is precisely the definition of  $\Lambda_{X/Y}$.

\medskip

{\em (4) Compact-generation.} We must also verify that $\QCsh_{\Lambda_n}(Z_n)$ is compactly-generated for each $n \geq 0$.  Imitating the above argument, we see that the essential image of 
\[ \QCsh(X \times_Y X)^{\otimes_{\QC X}(n+1)} \otimes_{\QC(X)^{\otimes 2}} \QC X  \longrightarrow \QCsh(Z_n) \]
is  precisely $\QCsh_{\Lambda_n}(Z_n)$.  Thus it is enough to observe that each of $\QC(X) \isom \QCsh(X)$ and $\QCsh(X \times_Y X)$ are compactly-generated (recall our standing assumptions), and that all the monoidal/module structure maps preserve compact objects so that the various tensor products are also compactly-generated.  This latter assertion follows from the smoothness and properness assumptions that we have, as was already implicit in the formula \autoref{eqn:C_bullet-tensor}.

\medskip
This concludes the proof of \autoref{thm:trace-matrix} with the proof of \autoref{prop:strict-check} to appear below.
\end{proof}

\subsubsection{Analysis of support conditions}
The goal of this and the next subsection is to establish \autoref{prop:strict-check}.
We continue with the notation introduced in the proof of  \autoref{thm:trace-matrix}.
In this subsection, we record 
useful identifications of the support conditions $\Lambda_n \subset \SuppSp Z_n$, in particular, their geometric fibers over $Z_n$.

\medskip

First, we record the following evident descriptions of the geometric points of $Z_n$ and $W_n$.
Note that while $Y(k)$ is a space,  the fiber product $(X \times_Y y)(k)$, for $y\in Y(k)$, is in fact a set.

      \begin{lemma}\label{lem: Z}  Each geometric point $\eta \colon \Spec k \to Z_n$ may be represented (not necessarily uniquely) as a tuple $(y; x_0, \ldots, x_{n}; \ell)$ where
$$      \xymatrix{
             y \in Y(k) & 
             x_0, \ldots, x_{n} \in (X \times_Y \{y\})(k) & 
             \ell \in \Aut_{Y(k)}(y) 
            }
     $$
      \end{lemma}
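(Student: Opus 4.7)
The plan is to unwind the defining presentation $Z_n = \Map([n], X) \times_{\Map([n], Y)} \Map(S^1, Y)$ used in the proof of \autoref{thm:trace-matrix} and read off geometric points component by component.

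\textbf{Step 1: the mapping pieces.} Since $[n]$ is a discrete set with $n+1$ elements, the $k$-points of $\Map([n], X)$ are tuples $(x_0,\ldots,x_n) \in X(k)^{n+1}$ and similarly for $Y$. Under the evaluation map $\Map([n], X) \to \Map([n], Y)$, such a tuple is sent to $(p(x_0),\ldots,p(x_n))$.

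\textbf{Step 2: points of the loop space.} Using $\Map(S^1, Y) \simeq LY \simeq Y \times_{Y \times Y} Y$, a $k$-point of $LY$ is equivalent data to a pair $(y, \ell)$ with $y \in Y(k)$ and $\ell \in \Aut_{Y(k)}(y)$. The restriction map $LY \to \Map([n], Y) = Y^{n+1}$ induced by the inclusion $[n] \hookrightarrow S^1$ of the $(n+1)$st roots of unity can be modeled (e.g., by viewing $S^1$ as the cyclic arrangement of $n+1$ arcs glued at the chosen marked points) by sending $(y, \ell)$ to a diagram weakly equivalent to the constant tuple $(y,\ldots,y)$, with the monodromy $\ell$ supported along the arc from the last marked point back to the first.

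\textbf{Step 3: assembling via the fiber product.} By the universal property of the homotopy fiber product, a $k$-point of $Z_n$ consists of: a tuple $(x_0,\ldots,x_n) \in X(k)^{n+1}$; a loop $(y,\ell) \in LY(k)$; and an equivalence in $\Map([n],Y)(k)$ between $(p(x_0),\ldots,p(x_n))$ and the constant tuple at $y$. Choosing such an equivalence componentwise lets us rectify each $x_i$ to a point of $(X \times_Y \{y\})(k)$, and the remaining monodromy datum is precisely an element $\ell \in \Aut_{Y(k)}(y)$, yielding the tuple $(y; x_0,\ldots,x_n;\ell)$ as claimed.

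\textbf{Non-uniqueness and main obstacle.} The representation is not unique because different choices of rectifying equivalences in Step 3 produce different but equivalent tuples (and the action of $\Aut(y)$ permutes representatives of the $x_i$ inside the fiber). The only subtlety is making sure the coherence data of the $\infty$-categorical fiber product is correctly tracked under the chosen combinatorial model of $S^1$; this is managed by the standard cyclic/arc presentation already implicit in the indexing $[n] \hookrightarrow S^1$ from the proof of \autoref{thm:trace-matrix}, so no new technical input is required.
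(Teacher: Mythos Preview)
Your proposal is correct and is precisely the unwinding the paper has in mind: the paper records this lemma as an ``evident description'' with no further proof, and your Steps 1--3 simply make explicit the passage through the defining presentation $Z_n = \Map([n], X) \times_{\Map([n], Y)} \Map(S^1, Y)$ that the paper introduces immediately before stating the lemma.
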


    \begin{lemma} Each geometric point $\eta:\Spec k \to W_n$ may be represented 
    (not necessarily uniquely)
    as a tuple $(y_0, \ldots, y_n; x_0, x'_0; \ldots; x_n, x'_n)$ where 
    $$
    \xymatrix{
    y_i \in Y(k) &  x_i, x'_i \in (X \times_Y \{y_i\})(k)
    }
    $$ 
    In terms of such representatives, the map $q_n:Z_n \to W_n$ is given by
      \[ 
      \xymatrix{
      q_n(y; x_0, \ldots, x_n; \ell) = (y,\ldots,y; x_0, x_1; x_1, x_2; \ldots; x_n, \ell \circ x_0) 
      }\] 
      where $\ell \circ x_0\in (X \times_Y \{y\})(k)$ denotes the pair $(x_0, y) \in (X \times \{y\})(k)$ but where the given identification $p( x_0) \sim y \in Y(k)$ is twisted by the automorphism $\ell \in \Aut_{Y(k)(y)}$.
    \end{lemma}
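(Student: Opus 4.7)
The first part of the lemma, describing $k$-points of $W_n$, follows directly from the fiber-product definition. A $k$-point of $X \times_Y X$ is represented by a triple $(y; x, x')$ with $y \in Y(k)$ and $x, x' \in (X \times_Y \{y\})(k)$, where the identifications $p(x) \simeq y \simeq p(x')$ are part of the data (one may simply choose $y$ to be any common target of the two images). The assertion for $W_n = (X \times_Y X)^{\times (n+1)}$ then follows by taking the $(n+1)$-fold Cartesian product.

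For the formula for $q_n$, the plan is to unwind the identification
$$Z_n \simeq (X \times_Y X)^{\times_X (n+1)} \times_{X^2} X.$$
The inner $(n+1)$-fold fiber product over $X$ has $k$-points represented by chains $(x_0, x_1), (x_1, x_2), \ldots, (x_n, x_{n+1})$ of consecutive $X \times_Y X$-pairs sharing middle entries -- all necessarily sitting over a common $y$. The further base-change along the diagonal $\Delta_X \colon X \to X \times X$ imposes, in the derived sense, a homotopy identifying the outer entries $x_{n+1}$ with $x_0$, i.e.\ an element of $\Aut_X(x_0)$; pushing this forward by $p$ yields the closing loop $\ell \in \Aut_{Y(k)}(y)$ from the representation of \autoref{lem: Z}. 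The projection $q_n \colon Z_n \to W_n$ is then simply the canonical forgetful map that drops the middle-sharing and the diagonal constraint, outputting the $n+1$ pairs as independent points of $X \times_Y X$.

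It remains to express these $n+1$ pairs in the tuple form from the first part. The first $n$ pairs $(x_{i-1}, x_i)$ are already laid out over the common $y$ via their inherited identifications. For the closing pair $(x_n, x_0)$, the second entry must be exhibited as an element of $(X \times_Y \{y\})(k)$, but since $x_0$ now appears as the endpoint of the loop (rather than its starting point), the relevant identification of $p(x_0)$ with $y$ is the inherited one precomposed with $\ell$ -- which is precisely the meaning of $\ell \circ x_0$ in the statement. I expect the only delicate point to be this final bookkeeping of how the derived-diagonal homotopy distributes among the $y$-identifications; once one tracks carefully how the loop arising from $\Delta_X$-pullback descends to a loop in $Y$ and is then absorbed into the closing pair, the stated formula is forced.
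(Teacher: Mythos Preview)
The paper does not provide a proof for this lemma; it is presented (together with the preceding lemma for $Z_n$) under the heading ``we record the following evident descriptions,'' so the paper regards it as immediate from the definitions. Your argument is therefore more detailed than what the paper offers, and it is essentially correct: you rightly identify the first claim as immediate from the product structure of $W_n$, and you correctly trace $q_n$ through the presentation $Z_n \simeq (X \times_Y X)^{\times_X (n+1)} \times_{X^2} X$ as the forgetful map to the absolute product.

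One small imprecision worth flagging: you describe the diagonal pullback as yielding ``an element of $\Aut_X(x_0)$'' which is then pushed forward to $\ell$. Strictly speaking, the base-change along $\Delta_X$ furnishes a path $\gamma \colon x_0 \simeq x_{n+1}$ in $X(k)$, not a priori a self-loop at $x_0$; the loop $\ell \in \Aut_{Y(k)}(y)$ arises by applying $p$ to $\gamma$ and then comparing the two resulting identifications of $p(x_0) \simeq y$ and $p(x_{n+1}) \simeq y$. (Equivalently, and perhaps more cleanly, one can read off $\ell$ directly from the alternate presentation $Z_n \simeq X^{\times_Y (n+1)} \times_Y LY$, where it is simply the $LY$-component.) This does not affect the substance of your argument, and your final paragraph already correctly identifies that the twist by $\ell$ in the closing pair records exactly this discrepancy of identifications.
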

    
Next, we have the following  description  of  the  geometric fibers of $\SuppSp Z_n \to Z_n$.

\begin{lemma}\label{lem: singZ} Fix a geometric point $\eta \colon \Spec k \to Z_n$ and a representative $(y; x_0, \ldots, x_n; \ell)$ of it.  Then
we have an identification of the fiber   
$$
   \res{(\SuppSp Z_n)}{\eta} = \left\{ (v_0, \ldots, v_n) \in (\res{\Omega_Y}{y})^{\oplus (n+1)}
   : 
   \begin{gathered} 
    (dp^*)_{x_0}(d\ell^*(v_n)) =  (dp^*)_{x_0}(v_0) 
     \\ 
   (dp^*)_{x_1}(v_0)  =  (dp^*)_{x_1}(v_1)  
   \\
   \cdots 
   \\ 
   (dp^*)_{x_n}(v_{n-1})  =  (dp^*)_{x_n}(v_n)
\end{gathered} 
\right\}
$$
       \end{lemma}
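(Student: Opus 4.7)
The plan is to obtain the fiber $\SuppSp Z_n|_\eta$ by a direct computation of the cotangent complex $\LL_{Z_n}|_\eta$. Since $Z_n$ is quasi-smooth, $\LL_{Z_n}$ has tor-amplitude $[-1,0]$, and the geometric fiber of $\SuppSp Z_n = \Spec \Sym H^1(\LL_{Z_n}^\vee)$ at $\eta$ is canonically identified with the linear space $H^{-1}(\LL_{Z_n}|_\eta)$ (placed in degree $0$); so the problem reduces to computing this kernel.

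To do this, I would present $Z_n$ as the $\infty$-categorical equalizer of the two maps $\varphi,\psi \colon X^{n+1} \rightrightarrows Y^{n+1}$ given by $\varphi = p^{n+1}$ and $\psi = p^{n+1} \circ \sigma$, with $\sigma$ the cyclic shift. Equivalently,
\[ Z_n \simeq X^{n+1} \times_{(Y^{n+1}) \times (Y^{n+1})} Y^{n+1}, \]
with the upper map $(\varphi,\psi)$ and the lower the diagonal. The chosen representative $(y;x_0,\ldots,x_n;\ell)$ of $\eta$ corresponds to $(\vec x,\vec y) = ((x_0,\ldots,x_n),(y,\ldots,y))$ together with the $2$-isomorphism $(\alpha_1,\alpha_2) = (\mathrm{id},(\mathrm{id},\ldots,\mathrm{id},\ell))$: the loop $\ell$ enters precisely because $\psi(\vec x)_n = p(x_0)$ must be matched to $y_n = y$ via $\ell$, while all the other identifications $p(x_{i+1}) \simeq y$ are canonical. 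The standard cofiber sequence for the cotangent complex of a pullback then gives
\[ \LL_{(Y^{n+1})^2}|_P \;\xrightarrow{\; d(\varphi,\psi)^* - d\Delta^* \;}\; \LL_{X^{n+1}}|_{\vec x} \oplus \LL_{Y^{n+1}}|_{\vec y} \;\longrightarrow\; \LL_{Z_n}|_\eta, \]
where the pullback $d\psi^*$ inherits a twist by $d\ell^*$ in the $n$-th coordinate owing to $\alpha_2$. Writing an element of the source as $((\alpha_0,\ldots,\alpha_n),(\beta_0,\ldots,\beta_n))$ with $\alpha_i,\beta_i \in \Omega_Y|_y$, the diagonal component sends it to $\vec\alpha + \vec\beta \in \LL_{Y^{n+1}}|_{\vec y}$, while the $(\varphi,\psi)$ component sends it to the $(n+1)$-tuple in $\LL_{X^{n+1}}|_{\vec x}$ with $i$-th coordinate $dp^*_{x_i}(\alpha_i + \beta_{i-1})$ for $1 \le i \le n$ and $dp^*_{x_0}(\alpha_0 + d\ell^*(\beta_n))$ for $i = 0$. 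The kernel, which computes $H^{-1}(\LL_{Z_n}|_\eta)$, is obtained by forcing $\vec\beta = -\vec\alpha$ from the diagonal component and substituting into the $(\varphi,\psi)$ component; setting $v_i := \alpha_i$ recovers precisely the description of the lemma.

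The only real subtlety is tracking how the loop $\ell$ enters the cotangent calculation: all components of the $2$-isomorphism $\alpha_2$ are identities except the last, so the $d\ell^*$-twist appears in exactly one of the $n+1$ matching conditions. This is what breaks the apparent cyclic symmetry and produces the asymmetric statement of the lemma, with $d\ell^*$ present only in the condition involving $x_0$.
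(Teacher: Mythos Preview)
Your proof is correct and follows essentially the same strategy as the paper: compute $\LL_{Z_n}|_\eta$ from a limit presentation of $Z_n$ and read off $H^{-1}$. The only difference is organizational. The paper uses the fine ``necklace'' diagram (a cycle with $X$ at each vertex and $Y$ on each edge), so that $\LL_{Z_n}|_\eta$ is directly the colimit of a cycle of maps $\Omega_Y|_y \to \Omega_X|_{x_i}$ and the twist by $d\ell^*$ is visibly attached to the single wrap-around edge. You instead bundle everything into a single pullback $X^{n+1}\times_{(Y^{n+1})^2}Y^{n+1}$ and recover the same two-term complex after imposing the diagonal relation $\vec\beta=-\vec\alpha$; this forces you to track the 2-isomorphism $(\mathrm{id},(\mathrm{id},\ldots,\mathrm{id},\ell))$ explicitly to locate the $d\ell^*$, which you do correctly. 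The resulting kernels are identical, so the two arguments are really two packagings of the same computation.
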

    \begin{proof}
      Let us return to the necklace description of $Z_n$ where we place $X$ at each vertex and $Y$ along each edge so that $Z_n$ is the limit of the resulting finite diagram.  Formation of cotangent complexes takes finite limits to finite colimits. Thus the fiber  $\LL_{Z_n}$ at a point is the colimit of the  diagram where we place the appropriate fiber of $\LL_X$ at each vertex and that of $\LL_Y$ along each edge.  Since $X$ and $Y$ are assumed to be smooth, we find that the fiber is the colimit of the diagram
    \[ \xymatrix{
    & 
    \res{\Omega_{Y}}{y} \ar[dl]|{-dp^*} \ar[dr]|{dp^*}  
    & 
    &  
    \res{\Omega_{Y}}{y} \ar[dl]|{-dp^*} \ar[dr]|{dp^*}  
    &  
    &  
    \cdots \ar[dr]|{dp^*} \ar[dl]|{-dp^*}               
    &  
       \\ 
    \res{\Omega_X}{x_0}     
    &  
    &  
    \res{\Omega_X}{x_1}     
    &  
    &  
    \res{\Omega_X}{x_1}
    & 
       & 
    \res{\Omega_X}{x_n}  
    \\ 
    & & & \ar[ulll]|{dp^* \circ d\ell^*}   \res{\Omega_{Y}}{y}  \ar[urrr]|{-dp^*} &&& 
    }\]
    Taking homology gives the asserted description.
  \end{proof}

Next, we record a  similar though elementary description  for    the  geometric fibers of $ \SuppSp W_n\to W_n$.

    \begin{lemma} 
     Fix a geometric point $\eta \colon \Spec k \to W_n$ and a representative
    $(y_0, \ldots, y_n; x_0, x'_0; \ldots; x_n, x'_n)$ of it.  Then we have an identification of the fiber 
    $$
   \res{(\SuppSp W_n)}{\eta} = \left\{  (v_0, \ldots, v_n) \in (\res{\Omega_Y}{y})^{\oplus (n+1)}
   : 
   \begin{gathered} 
  (dp^*)_{x_0}(v_0)= (dp^*)_{x'_0}(v_0)  = 0    
  \\ 
    \cdots 
   \\ 
 (dp^*)_{x_n}(v_{n}) = (dp^*)_{x'_n}(v_n) = 0
 \end{gathered} 
\right\}
$$
    \end{lemma}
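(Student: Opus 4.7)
The plan is to follow the template of the preceding lemma for $Z_n$, but the situation here is structurally simpler because $W_n = (X\times_Y X)^{\times (n+1)}$ is an honest product (over $k$), not a connected necklace with shared vertices. Concretely, I would first reduce to the case $n=0$, then compute the cotangent complex of a single factor $X\times_Y X$ as a pushout using the smoothness of $X$ and $Y$.

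For the reduction, since $W_n$ is a product of $n+1$ copies of $X\times_Y X$, taking cotangent complexes turns it into a direct sum:
\[
\LL_{W_n}|_\eta \;\simeq\; \bigoplus_{i=0}^{n} \LL_{X\times_Y X}\big|_{(x_i,x'_i)}.
\]
Passing to $\SuppSp$ (equivalently, to the appropriate piece of the dual cohomology) commutes with direct sums, so the fiber of $\SuppSp W_n$ at $\eta$ splits as the direct sum of fibers of $\SuppSp(X\times_Y X)$ at the pairs $(x_i,x'_i)$, and it suffices to verify the statement for one factor at a time.

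For a single factor, $X\times_Y X$ is the limit of the diagram $X\xrightarrow{p} Y \xleftarrow{p} X$, so by the standard fact that $\LL$ turns finite limits into finite colimits and using that $\LL_X$, $\LL_Y$ are concentrated in degree zero, the fiber is the pushout
\[
\LL_{X\times_Y X}\big|_{(x_i,x'_i)} \;\simeq\; \mathrm{colim}\!\left\{\Omega_X|_{x_i}\xleftarrow{(dp^*)_{x_i}}\Omega_Y|_{y_i}\xrightarrow{(dp^*)_{x'_i}}\Omega_X|_{x'_i}\right\},
\]
which is represented by the two-term complex $\Omega_Y|_{y_i}\xrightarrow{((dp^*)_{x_i},\,-(dp^*)_{x'_i})}\Omega_X|_{x_i}\oplus\Omega_X|_{x'_i}$ placed in cohomological degrees $[-1,0]$. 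Its $H^{-1}$ is exactly the simultaneous kernel
\[
\{v_i\in\Omega_Y|_{y_i}\;:\;(dp^*)_{x_i}(v_i)=0\text{ and }(dp^*)_{x'_i}(v_i)=0\},
\]
and this cohomology group parametrizes the $k$-points of the fiber of $\SuppSp(X\times_Y X)$ at $(x_i,x'_i)$ under the standard identification (the fiber of the odd cotangent bundle is dual to $H^1$ of the dual cotangent complex, equivalently $H^{-1}$ of the cotangent complex itself).

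Assembling these kernels over $i=0,\ldots,n$ via the direct sum splitting from the first step gives the stated description of $\res{(\SuppSp W_n)}{\eta}$. No real obstacle is expected: everything is immediate from the pushout/pullback formalism and smoothness of $X$ and $Y$. The only minor bookkeeping is keeping track of signs when writing the pushout as a two-term complex, which does not affect the kernel and hence does not affect the final answer.
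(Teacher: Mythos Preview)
Your proposal is correct and follows exactly the template the paper intends: the paper itself gives no proof for this lemma, merely noting it is ``a similar though elementary description'' to the preceding computation for $Z_n$, and your argument is precisely the natural elaboration of that remark---reduce to a single factor by the product decomposition of $W_n$ over $k$, then compute $\LL_{X\times_Y X}$ as the pushout of $\Omega_X\leftarrow\Omega_Y\rightarrow\Omega_X$ and read off $H^{-1}$ as the simultaneous kernel.
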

    
    Finally, we arrive at  the following  description  of  the  geometric fibers of $\Lambda_n \to Z_n$.

    \begin{lemma}    
  Fix a geometric point $\eta \colon \Spec k \to Z_n$ and a representative $(y; x_0, \ldots, x_n; \ell)$ of it.
  
  In terms of our previous identification of $(\SuppSp Z_n)_{\eta}$, we have an identification of the fiber
    \[ \res{(\Lambda_n)}{\eta} = \left\{  (v_0, \ldots, v_n) \in \res{\Omega_Y}{y}^{\oplus (n+1)} : 
    \begin{gathered}   (dp^*)_{x_0}(d\ell^*(v_n)) = (dp^*)_{x_0}(v_0) = 0  \\  (dp^*)_{x_1}(v_0) = (dp^*)_{x_1}(v_1) = 0 \\ \cdots \\  (dp^*)_{x_n}(v_{n-1}) = (dp^*)_{x_n}(v_n) = 0\end{gathered} \right\} \subset \res{(\SuppSp Z_n)}{\eta} \]
    \end{lemma}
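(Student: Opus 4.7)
The plan is to unwind the definition $\Lambda_n = q_n^!(\SuppSp W_n) = dq_n^*(Z_n \times_{W_n} \SuppSp W_n)$ at the fiber over $\eta$, and identify the resulting subset of $(\SuppSp Z_n)|_\eta$ using the two preceding fiber computations. Concretely, $(\Lambda_n)|_\eta$ is the image of $(\SuppSp W_n)|_{q_n(\eta)}$ under the fiber differential $dq_n^*|_\eta$, which in turn is the induced map on $H^{-1}$ of cotangent complexes $H^{-1}(q_n^*\LL_{W_n})|_\eta \to H^{-1}(\LL_{Z_n})|_\eta$. So the core task is to identify this map concretely and then take its image.

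First I would compare the two cotangent complex diagrams. On the $W_n$-side, $q_n^*\LL_{W_n}|_\eta$ decomposes as a direct sum over factors indexed by $i = 0, \ldots, n$, and the $i$-th summand is the two-term complex $\Omega_Y|_y \to \Omega_X|_{x_i} \oplus \Omega_X|_{x_{i+1}}$ for $i < n$, or $\Omega_Y|_y \to \Omega_X|_{x_n} \oplus \Omega_X|_{x_0}$ with $\ell$-twist for $i = n$. On the $Z_n$-side, the cotangent complex diagram from the proof of the previous lemma has exactly the same edge-copies of $\Omega_Y|_y$ at the top, but now joined through shared vertex-copies of $\Omega_X$ in the necklace pattern. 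The natural transformation $q_n^*\LL_{W_n} \to \LL_{Z_n}$ is precisely the map that collapses the disjoint-edge diagram onto the necklace by identifying the vertex contributions; on the edge-indexed summands of $\bigoplus_i \Omega_Y|_y$ it is the identity.

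Taking $H^{-1}$, this means $dq_n^*|_\eta$ sends a tuple $(v_0, \ldots, v_n) \in (\SuppSp W_n)|_{q_n(\eta)}$ to the same tuple regarded as an element of $(\Omega_Y|_y)^{n+1}$. The previous lemma says the source tuples are exactly those satisfying $(dp^*)_{x_i}(v_i) = (dp^*)_{x_{i+1}}(v_i) = 0$ for $i < n$ and $(dp^*)_{x_n}(v_n) = (dp^*)_{x_0}(d\ell^*(v_n)) = 0$ for $i = n$. These vanishings trivially imply the equality conditions defining $(\SuppSp Z_n)|_\eta$, so the image lies in $(\SuppSp Z_n)|_\eta$ and has precisely the asserted description. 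Conversely any tuple in $(\SuppSp Z_n)|_\eta$ satisfying the vanishing conditions clearly lies in the image, completing the identification.

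The main obstacle I foresee is bookkeeping the $\ell$-twist on the last edge. One needs to verify that under the correspondence of cotangent complex diagrams, the twist in the necklace differential (acting via $d\ell^*$ on the edge entering vertex $x_0$) matches the twist in the $n$-th factor of $q_n$ (whose second endpoint is $\ell \circ x_0$, so $(dp^*)_{\ell\circ x_0} = (dp^*)_{x_0}\circ d\ell^*$). Once this coincidence is recorded cleanly, the identification of sources and targets becomes the same as for $i < n$ and the argument proceeds uniformly.
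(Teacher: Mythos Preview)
Your proposal is correct and follows essentially the same approach as the paper: both identify the fiber map $dq_n^*|_\eta$ as the identity on tuples $(v_0,\ldots,v_n)$ and then read off the description from the previous two lemmas. The paper's proof simply asserts that $dq_n^*(v_0,\ldots,v_n)=(v_0,\ldots,v_n)$ under the given identifications and stops there, whereas you additionally justify this by comparing the cotangent-complex diagrams and carefully track the $\ell$-twist via $(dp^*)_{\ell\circ x_0}=(dp^*)_{x_0}\circ d\ell^*$; this extra detail is accurate and arguably clarifies what the paper leaves implicit.
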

    \begin{proof}
      Let $\eta' = q_n(\eta)$. Under our previous identifications, the pullback map
      \[ 
      \xymatrix{
      dq_n^* \colon Z_n \times_{W_n} \times \SuppSp W_n \ar[r] & \SuppSp Z_n 
      }\]
restricted to the fibers
$$
\xymatrix{
  dq_n^* \colon\res{(\SuppSp W_n)}{\eta'}   \ar[r] & \res{(\SuppSp Z_n)}{\eta} 
}$$ 
  is given by the identity
      \[ 
       dq_n^*(v_0, \ldots, v_n) = (v_0, \ldots, v_n) 
       \]
      Thus the assertion follows from our previous identifications.
    \end{proof}

    \begin{remark} The previous lemmas state that 
    $$
    \res{(\Lambda_n)}{\eta} \subset \res{(\SuppSp Z_n)}{\eta}
    $$ is cut out by the additional equations
    $$
\xymatrix{
    (dp^*)_{x_i}(v_i) = 0, & \text{ for all } i =0, \ldots, n.
}    $$ 
    \end{remark}

\subsubsection{Verification of strictness condition}
We continue with the notation introduced in the proof of  \autoref{thm:trace-matrix}.
Our goal is to complete the proof of the theorem by  establishing the following.

\begin{prop}\label{prop:strict-check} 
    The diagram
    \[ \xymatrix{
    (Z_{n+1},\Lambda_{n+1}) \ar[d]^{\tilde g} \ar[r]^-{d_0}& (Z_n,\Lambda_{n}) \ar[d]^g  \\
    (Z_{m+1}, \Lambda_{m+1})   \ar[r]^-{d_0}& (Z_m,\Lambda_m) } \]
    is a {\em strict} Cartesian diagram of pairs. 
  \end{prop}

We will prove the proposition  in two steps: first for face maps in \autoref{lem:strict-check-face} and then in general.

  \begin{lemma}\label{lem:strict-check-face}  \autoref{prop:strict-check} holds when $g$ corresponds to a face map (order-preserving inclusion).
  \end{lemma}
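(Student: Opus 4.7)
The plan is to verify the strict Cartesian condition at the level of geometric fibers, exploiting the explicit descriptions of $\SuppSp Z_k|_\eta$ and $\Lambda_k|_\eta$ from the preceding fiber lemmas.  The Cartesian property itself is built in: writing $Z_{n+1} \simeq Z_n \times_Y X$ via the \v{C}ech nerve description of $Z_\bullet$ over $LY$ (so that $d_0$ is a base change of $p \colon X \to Y$ along the evaluation $Z_n \to LY \to Y$ at the $0$th vertex), the map $\tilde g$ is canonically identified with $g \times_Y \id_X$ and the square is Cartesian.

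For the strictness inclusion $\Lambda_{n+1} \supset d_0^! \Lambda_n \cap \tilde g^! \Lambda_{m+1}$, I work at a geometric point $\eta = (y; x_0, \ldots, x_{n+1}; \ell)$.  The key observation is that $\Lambda_{n+1}|_\eta$ is cut out of $\SuppSp Z_{n+1}|_\eta$ precisely by the vertex-local vanishing conditions $(dp^*)_{x_i}(v_i) = 0$, one per vertex of the necklace, and that these conditions are independent of one another.  The face map $d_0$ forgets exactly the vertex $x_0$, so pulling back the vanishing conditions present in $\Lambda_n|_{d_0 \eta}$ imposes $(dp^*)_{x_i}(v_i) = 0$ at the surviving vertices $x_1, \ldots, x_{n+1}$.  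Conversely $\tilde g$, which by construction keeps $x_0$ and forgets only a subset of $\{x_1, \ldots, x_{n+1}\}$, contributes the vanishing condition at $x_0$ via $\tilde g^! \Lambda_{m+1}|_\eta$.  Their intersection therefore imposes vanishing at every vertex of the $Z_{n+1}$-necklace, which is exactly the defining condition for $\Lambda_{n+1}|_\eta$, whence strictness.

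Checking that each arrow of the square is a map of pairs reduces to the same vertex-local analysis: a forgetful pushforward sends a covector vanishing at every vertex of one necklace to one vanishing at every vertex of the collapsed necklace, since each retained vertex is obtained by merging a run of vertices each of which individually imposed the vanishing.  The step demanding the most bookkeeping is the explicit identification, within $\SuppSp Z_{n+1}|_\eta$, of the image of the closed immersion $dd_0^*$, where the loop twist $d\ell^*$ enters through the merging of the two edges adjacent to $x_0$ across the closing edge of the necklace.  This identification reads off directly from the necklace-diagram model of $\LL_{Z_k}|_\eta$ used in the proofs of the preceding fiber descriptions and is the only nontrivial computation the argument requires.
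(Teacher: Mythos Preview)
Your proposal is correct and follows essentially the same approach as the paper's proof: both work fiberwise at a geometric point $\eta$, observe that $d_0^!\Lambda_n|_\eta$ imposes the vanishing conditions $(dp^*)_{x_i}(v_i)=0$ at vertices $x_1,\ldots,x_{n+1}$ (together with the loop-twist relation $v_0 = d\ell^*(v_{n+1})$), while $\tilde g^!\Lambda_{m+1}|_\eta$ contributes the missing condition at $x_0$ because $\tilde\psi(0)=0$ ensures $0\in\im\tilde\psi$. The paper differs only in writing out the explicit formulas for $(d_0)^!\Lambda_n|_\eta$ and $(\tilde g)^!\Lambda_{m+1}|_\eta$ rather than phrasing the argument in terms of which vertices are kept or forgotten, but the underlying mechanism is identical.
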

  
  \begin{proof}
    We will prove that the inclusion holds over each geometric point $\eta\in   Z_{n+1}(k)$ via explicit formulas for the relevant subspaces of $\res{(\SuppSp Z_{n+1})}{\eta}$.  
    To do this, given  any injection $\psi \colon [m] \to [n]$, we will give a fiberwise description of the induced map
    $$
    \xymatrix{
     Z_{n} \times_{Z_{m}} \SuppSp Z_{m} \ar[r] & \SuppSp Z_{n} 
     }
     $$ 
    We will provide explicit formulas below, but first let us give a more informal description.
   
   \medskip
    
    {\em Informal description:} 
    Suppose $\eta \in Z_{n}(k)$ is represented by a tuple $(s; x_0, \ldots, x_{n}; \ell)$ as in \autoref{lem: Z}.  Then its image $\bar \eta \in Z_{m}(k)$ is given by forgetting some $x_i$ as prescribed by $\psi$.   The  map of fibers 
    $$
    \xymatrix{
     \res{(\SuppSp Z_{m})}{\bar \eta} \ar[r] & \res{(\SuppSp Z_{n})}{\eta} 
     }
     $$
    is given 
    in terms of a tuple $(v_0, \ldots, v_m)$ as in  \autoref{lem: singZ}
    by repeating entries (as in the formula  for $(\tilde g)!$ below) with $d\ell^*$ inserted when looping around (as in the formula for  $(d_0)^!$ below).  
    The resulting  element may be depicted graphically  as follows: 
    \[ \xymatrix@C=1.7pc{
    & x_0             \ar@{<->}@/^20px/[r]|{d\ell^*(v_{m})}="dvml"  
    & \cdots         
    & x_{\psi(0)-1}   \ar@{<->}@/^20px/[r]|{d\ell^*(v_{m}) \big\}}="dvmrr"          \ar@{}|{=\cdots=}"dvml";"dvmrr"  
    & x_{\psi(0)}     \ar@{<->}@/^20px/[r]|{\big\{ \fbox{$\scriptstyle{v_0}$}}="v0l" 
    & \cdots          \ar@{<->}@/^20px/[r]|{v_0 }="v0r"                      \ar@{}|{=\cdots=}"v0l";"v0r"
    & x_{\psi(1)-1}   \ar@{<->}@/^20px/[r]|{v_0 \big\}}="v0rr"               \ar@{}|{=}"v0r";"v0rr"
    & x_{\psi(1)}     \ar@{<->}@/^20px/[r]|{\big\{ \fbox{$\scriptstyle{v_1}$} \cdots \big\}}="v1l"
    & \cdots 
    & x_{\psi(k)}     \ar@{<->}@/^20px/[r]|{\big\{ \fbox{$\scriptstyle{v_{m}}$}}="vml"
    & \cdots          \ar@{<->}@/^20px/[r]|{v_{m}}="vmr" 
    & x_{m}          \ar@{}|{=\cdots=}"vml";"vmr"
    } \]
    Here the arrows represent summands of the linear condition cutting out $\res{(\SuppSp Z_m)}{\eta}$.

\medskip

    {\em Formulas:}
    Using the above description, at a geometric point $\eta\in Z_{n+1}(k)$, we find that 
    \[ 
    \res{((d_0)^! \Lambda_n)}{\eta} = \left\{ (v_0, \cdots, v_{n+1})  \in (\res{\Omega_Y}{y})^{\oplus (n+1)}
    : \begin{gathered}  v_0 = d\ell^*(v_{n+1}) \\  (d\pi^*)_{x_i}(v_{i-1}) = (dp^*)_{x_i}(v_i) = 0, \text{ for $1 \leq i \leq n+1$} \end{gathered} \right\}
     \]
     
      For $\psi \colon [m] \to [n]$ the face map  inducing $g$,   the simplicial map $\tilde \psi \colon [m+1] \to [n+1]$ inducing $\tilde g$ is  given  by $\tilde \psi(0)=0$ and $\tilde \psi(i)=1+\phi(i-1)$, for $i \geq 1$. Let $\im\tilde \psi \subset [n+1]$ denote the image of $\tilde \psi$, and define $a\colon [n+1] \to \im \tilde \psi$ by setting $a(i) = \sup\{ \tilde \psi(j) : \tilde \psi(j) \leq i\}$.  Then we find that
      \[ \res{((\tilde g)^! \Lambda_{m+1})}{\eta} = \left\{ (v_0, \cdots, v_{n+1}) \in (\res{\Omega_Y}{y})^{\oplus (n+1)}
      : \begin{gathered} v_i = v_{a(i)}, \text{ for $i \in [n+1] \setminus \im \tilde \psi$}\\  (dp^*)_{x_i}(v_{i-1}) = (d\pi^*)_{x_i}(v_i) =0, \text{ for $i \in \im \tilde \psi$}  \end{gathered}\right\} 
      \]

    Since $0\in \im \tilde \psi$, we conclude that
    \[ 
    ((d_0)^! \Lambda_n)_{\eta} \cap ((p')^! \Lambda_{m+1})_{\eta} \subset \left\{ (v_0, \ldots, v_{n+1}) \in \res{(\SuppSp Z_{n+1})}{\eta}   :  (dp^*)_{x_i}(v_i) =0, \text{ for $0\leq i \leq n+1$}\right\} = \res{(\Lambda_{n+1})}{\eta}\qedhere \qedhere\]
    \end{proof}

    We are now ready to complete the proof of \autoref{prop:strict-check} for an arbitrary simplicial structure map.
    
\begin{proof}[Proof of \autoref{prop:strict-check}]
Let $\psi:[m]\to [n]$ be the simplicial map inducing $g$. It uniquely 
          factors 
    \[ 
    \xymatrix{
    \psi \colon [m] \ar@{->>}[r]^{\pi} &  [k]\isom \im\psi \ar@{^(->}[r]^-{\iota} &  [n] 
    }
    \]   
    as a 
    surjection followed by an injection
    This gives rise to an extended diagram
    \[ \xymatrix{
    (Z_{n+1},\Lambda_{n+1}) \ar[d]^{\tilde p} \ar[r]^-{d_0}& (Z_n,\Lambda_{n}) \ar[d]^p  \\
    (Z_{k+1},\Lambda_{k+1}) \ar[d]^{\tilde q} \ar[r]^-{d_0}& (Z_k,\Lambda_{k}) \ar[d]^q  \\
    (Z_{m+1}, \Lambda_{m+1})   \ar[r]^-{d_0}& (Z_m,\Lambda_m) } \]
    where $p$ correspond to the injection $\iota$, and $q$ corresponds to the surjection $\pi$.  
    
   We need to show that the large square satisfies the required strictness.  
    By the previous lemma, we know that the top square satisfies the required strictness.  
    Thus it suffices to show that $(\tilde q)^! \Lambda_{m+1} = \Lambda_{k+1}$ since then 
    \[
     (\tilde q \circ \tilde p)^! \Lambda_{m+1} = (\tilde p)^! (\tilde q)^! \Lambda_{m+1} = (\tilde p)^! \Lambda_{k+1} 
    \] and we are reduced to applying the previous lemma to the top  square.

    To do this, as in the proof of the previous lemma,  over each geometric point $\eta\in   Z_{k+1}(k)$, 
    we will give a  description of the induced map of fibers
    \[ 
    \xymatrix{
 \res{ \left( \SuppSp Z_{m+1}\right)}{\eta} \ar[r] &  \res{(\SuppSp Z_{k+1})}{\eta }
    }\]

%
%

      Define $\pi'\colon [k] \to [m]$ to be the section of $\pi$ given by its break points
    \[ \pi'(i) = \sup \pi^{-1}(i)  \]  
      Then in terms of the identifications of \autoref{lem: singZ},   the pullback map admits the description
    \[ \xymatrix{
  (v_0, \ldots, v_{m+1}) \ar@{|->}[r] &  (v_0, v_{1+\pi'(0)}, \ldots, v_{1+\pi'(k)})
  } \]
Note that this is no longer a closed immersion, and instead admits a section by repeating terms.

%
%
%
    
     It is now elementary to see that $(\tilde q)^! \Lambda_{m+1} = \Lambda_{k+1}$. On the one hand, the inclusion 
     $(\tilde q)^! \Lambda_{m+1} \subset \Lambda_{k+1}$
     is evident. On the other hand, the inclusion $(\tilde q)^! \Lambda_{m+1} \supset \Lambda_{k+1}$ follows from the fact  that the noted section takes $\Lambda_{k+1}$ into $\Lambda_{m+1}$.
     
     This completes the proof of \autoref{prop:strict-check} and in turn that of \autoref{thm:trace-matrix}.
    \end{proof}


\subsection{Centers of convolution categories} 

The above states that $\DCoh(X \times_Y X)$ is like a ``matrix'' or endormophism algebra for $\Perf(X)$ over $\Perf(Y)$.  In linear algebra, it is not hard to show that the center / Hochschild cohomology of $\End_R(M)$ will be the same as that of $R$ so long as $M$ ``sees'' all of $R$ (e.g., if $R$ is a retract of $M$).  The main result of this section will be a categorified version of this in the special case of convolution categories.  The proof will not be rather so abstract, but rather a will use the ``functional analysis'' for categories like $\DCoh$ developed in \cite{BNP1}.

To explain the situation, let us start with 
 $p:X\to Y$ a  surjective map of perfect stacks.
 
Observe that the loop space $LY=\Map(S^1, Y)$ comes equipped with a natural basepoint map $e:LY\to Y$ as well as rotational $S^1$-action. We fix once and for all the identification
$$
LY \simeq Y\times_{Y \times Y} Y
$$
so that $e$ corresponds to the first projection (which is equivalent to the second projection, though in two different ways).

The category $\QC(LY)$ has a natural $E_2$-monoidal structure and the pushforward functor
$$
\xymatrix{
e_*\colon\QC(LY) \ar[r] &  \QC(Y)
}
$$ realizes 
$\QC(LY)$ as the center of $\QC(Y)$.
More generally,  
recall the fundamental correspondence
 $$
 \xymatrix{
LY  & \ar[l]_-p LY\times_Y X \ar[r]^-\delta \simeq  (X\times_Y X) \times_{X\times X} X &    X \times_Y X 
 }
 $$   
 The pullback-pushforward functor
$$
\xymatrix{
\delta_*p^* \colon \QC(LY) \ar[r] &  \QC(X\times_Y X)
}
$$ also realizes $\QC(LY)$  as the center of $\QC(X \times_Y X)$.

For the above assertions, see~\cite{BFN}; here is an outline of a proof.

Let us recall some generalities seen in the proof of \autoref{thm:trace-matrix}. For notational convenience, set $\A = \QC(X \times_Y X)$ and $\B = \QC (X)$.  Observe that pushforward along the relative diagonal $\Delta_* \colon \B \to \A$ is monoidal, and thus we may regard $\A$ as an algebra in $\B$-bimodules. 

Given an algebra $\A$ in $\B$-bimodules, we have its relative bar resolution
  \[ 
  \xymatrix{
  \A \isom 
  \left| \A^{\otimes_\B (\bullet+2)} \right| 
  }\]
   which can be used to calculate its center
  \[   \xymatrix{
\Center(\A) =  \Hom_{\A \otimes \A^{op}}(\A, \A) = \Hom_{\A \otimes \A^{op}}(\left| \A^{\otimes_\B (\bullet+2)} \right|,  \A) = 
  \Tot\left\{\Hom_{\B \otimes \B^{op}} (\A^{\otimes_\B \bullet},  \A) \right\}
}  \]
We will access the center as the totalization of the cosimplicial object 
$$
\C^\bullet =  \Hom_{\B \otimes \B^{op}} (\A^{\otimes_\B \bullet},  \A)
$$

Unwinding the notation and using the canonical identity $\QC (X)^{\otimes k} \simeq \QC(X^k)$,  we find the cosimplicial category
\[ 
\xymatrix{
\C^\bullet \simeq 
\Fun_{\QC(X^2)}^L(\QC(X \times_Y X)^{\otimes_{\QC(X)} \bullet}, \QC(X \times_Y X))
}
\]
 Using the canonical identity of functors with integral transforms, we find further
 \[ 
\xymatrix{
\C^\bullet \simeq 
 \QC(X^{\times_Y (\bullet+1)} \times_Y LY)
 }
\]
where  the cosimplicial  structure maps are given by $*$-pullback functors.
Thus with any assumptions for which descent holds (see for example
\autoref{cor:qc-qs-descent}),  one has the identification
$$
\Center(\A) = \Tot\C^\bullet \simeq \QC(LY)
$$

Now let us return to small categories of coherent sheaves. Suppose now that $X$ and $Y$ are smooth and $p:X\to Y$ is proper and surjective. Observe that the $E_2$-monoidal structure on $\QC(LY)$ preserves the full subcategory
$$
\DCoh_{\proper/Y}(LY) \subset \QC(LY)
$$
of coherent complexes with  support proper over $Y$ (or equivalently, proper over $Y$ with respect to the second projection). It consists precisely of those complexes taken via $e_*$ to the full subcategory
$\Perf Y\subset \QC(Y)$.

\begin{theorem}\label{thm:center-matrix} Let $X, Y$ be smooth and $p: X \to Y$ be  proper and surjective.

There is a natural $E_2$-identification
\[
\xymatrix{
 \DCoh_{\proper/Y} (LY) 
  \ar[r]^-\sim &  \Center(\DCoh(X \times_Y X))
 }
 \]
\end{theorem}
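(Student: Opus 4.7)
The argument will parallel the $\QC$-version sketched immediately before the theorem (giving $\Center(\QC(X\times_Y X))\simeq\QC(LY)$): present the center via the bar resolution as a totalization, geometrize each level using the integral transforms of~\cite{BNP1}, and apply descent. The only genuinely new input is keeping track of the $\proper/Y$ condition.

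Set $\A=\DCoh(X\times_Y X)$ and $\B=\Perf X=\DCoh X$ (legitimate since $X$ is smooth). Pushforward along the relative diagonal makes $\A$ an algebra in $\B$-bimodules, and the bar resolution $\A\simeq |\A^{\otimes_\B(\bullet+2)}|$ yields
\[
\Center(\A)\simeq \Tot\{\C^\bullet\},\qquad \C^n:=\Fun^{ex}_{\Perf(X^2)}(\A^{\otimes_\B n},\A).
\]
Iterated application of \autoref{prop: tensor product} identifies $\A^{\otimes_\B n}$ with $\DCoh_{\Lambda_n}(X^{\times_Y(n+1)})$ for a natural support condition $\Lambda_n$ cut out by the successive diagonals. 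Applying the integral-transform equivalence of~\cite{BNP1} — the generalization of \autoref{cor: matrix} allowing a $\DCoh$ target over a smooth base — to the pair of quasi-smooth stacks $X^{\times_Y(n+1)}\to X^2$ (endpoints) and $X\times_Y X\to X^2$ (inclusion) geometrizes each level. The fiber product $X^{\times_Y(n+1)}\times_{X^2}(X\times_Y X)$ is identified with $X^{\times_Y(n+1)}\times_Y LY$ exactly as in the $\QC$ case, and base change of the proper map $X^{\times_Y(n+1)}\to Y$ shows that the ``proper over $X^{\times_Y(n+1)}$'' output condition coincides with $\proper/Y$. Thus
\[
\C^n\simeq \DCoh_{\proper/Y}\bigl(X^{\times_Y(n+1)}\times_Y LY\bigr),
\]
with cosimplicial structure maps given by pullbacks along the natural simplicial maps of $X^{\times_Y(\bullet+1)}\times_Y LY$.

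For the descent step, the augmented simplicial stack $X^{\times_Y(\bullet+1)}\times_Y LY\to LY$ is the \v Cech nerve of $X\times_Y LY\to LY$, base-changed from the proper, quasi-smooth, surjective map $p\colon X\to Y$. Descent for the ambient $\QC$-categories is provided by \autoref{cor:qc-qs-descent}; restricting to the $\proper/Y$ subcategories amounts to two checks: (a) pullback along the face maps, being a base change of $p$, preserves the $\proper/Y$ condition (by flat/proper base change), and (b) the augmentation pullback $\DCoh_{\proper/Y}(LY)\to\DCoh_{\proper/Y}(X\times_Y LY)$ is conservative, using surjectivity of $p$ together with the conservativity already available for the ambient $\QC$-descent. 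We conclude $\Tot\{\C^\bullet\}\simeq \DCoh_{\proper/Y}(LY)$, and unwinding the bar model identifies the universal central map with the integral-transform functor $\delta_*\pi^*$ along the correspondence $LY\leftarrow LY\times_Y X\to X\times_Y X$. The $E_2$-monoidal structure is inherited by compatibility with the ambient equivalence $\Center(\QC(X\times_Y X))\simeq\QC(LY)$, into which $\DCoh_{\proper/Y}(LY)$ embeds fully faithfully.

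The main obstacle is not the descent step, which is essentially a refinement of \autoref{cor:qc-qs-descent}, but rather the geometrization of each $\C^n$ in Step~2: one must verify that the integral-transform equivalence of~\cite{BNP1} matches the $\proper/Y$ condition uniformly in $n$ and in a manner compatible with the cosimplicial structure maps, so that the resulting descent diagram is the expected one. Once this is in place, the remaining arguments are formal consequences of the techniques already developed in \autoref{sect: supp} and~\cite{BNP1}.
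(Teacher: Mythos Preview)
Your bar-resolution setup and the recognition that the center should be computed as a totalization over the \v{C}ech nerve of $X\times_Y LY\to LY$ match the paper exactly. The divergence is in how you extract the answer from that totalization, and the paper's route is significantly lighter.

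You attempt to identify each level $c^n=\Fun^{ex}_{\Perf(X^2)}(\A^{\otimes_\B n},\A)$ geometrically as $\DCoh_{\proper/Y}(X^{\times_Y(n+1)}\times_Y LY)$ and then run a separate descent argument for the $\proper/Y$ condition. There are two real obstacles here. First, for $n\geq 1$ the source $\A^{\otimes_\B n}\simeq\DCoh_{\Lambda_n}(X^{\times_Y(n+1)})$ carries a nontrivial singular support condition, and the integral transform theorems of \cite{BNP1} with a $\DCoh_\Lambda$ source do not obviously produce the clean $\DCoh_{\proper/Y}$ description you claim; a priori a dual support condition could appear on the kernel side. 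Second, the $\proper/Y$ condition is a classical-support condition, not a singular-support condition, so the descent machinery of \autoref{sect: supp} does not apply directly and you would need a new argument.

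The paper sidesteps both issues. It only uses that each $c^n\hookrightarrow\C^n$ is \emph{fully faithful} (this is what \cite{BNP1}*{Section~5.3} provides), not that the essential image is identified. Since totalization of levelwise fully faithful functors is fully faithful, one gets $\Tot c^\bullet\hookrightarrow\Tot\C^\bullet\simeq\QC(LY)$ using the already-established $\QC$-descent. The essential image is then characterized by a single condition at level $0$: namely $\F\in\QC(LY)$ lies in $\Tot c^\bullet$ iff $\delta_*p^*\F\in\DCoh(X\times_Y X)$. This is checked directly by an elementary argument about supports (using that $\delta$ is affine and $p$ is proper and surjective) to be equivalent to $\F\in\DCoh_{\proper/Y}(LY)$. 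No levelwise essential-image identification and no new descent theorem are needed.
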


\begin{proof}
As explained above,
via the relative bar resolution for the monoidal map $\Delta_*:\QC(X)\to \QC(X \times_Y X)$, we may calculate $\Center(\QC(X \times_Y X))$ as the totalization of
the cosimplicial diagram
\[ 
\xymatrix{
\C^\bullet \simeq 
\Fun_{\QC(X^2)}^L(\QC(X \times_Y X)^{\otimes_{\QC(X)} \bullet}, \QC(X \times_Y X))
\simeq
 \QC(X^{\times_Y (\bullet+1)} \times_Y LY)
}
\]

Likewise, via the relative bar resolution for the monoidal map $\Delta_*:\Perf X\to \DCoh(X \times_Y X)$,  we may calculate 
$\Center(\DCoh(X \times_Y X))$ as  the totalization of the cosimplicial diagram
\[ 
\xymatrix{
c^\bullet \simeq 
\Fun_{\Perf(X^2)}^{ex}(\DCoh(X \times_Y X)^{\otimes_{\Perf(X)} \bullet}, \DCoh(X \times_Y X))
}
\]

By \cite[Section 5.3]{BNP1}, the natural map of cosimplicial diagrams
$$
\xymatrix{
c^\bullet \ar[r] & \C^\bullet
}
$$ 
is  fully faithful at each term 
thus we have a fully faithful inclusion
$$
\xymatrix{
\Tot c^\bullet \ar@{^(->}[r] & \Tot \C^\bullet \simeq \QC(LY)
}
$$
 
 The essential image  consists of objects  that land in $c^0\subset \C^0$ under the coaugmentation map $\Tot \C^\bullet\to \C^0$.
 In other words, it  consists of
  $\F\in \QC(LY)$ such that $\delta_*p^*\F\in \QC(X\times_Y X)$ in fact lies in $\DCoh(X\times_Y X)$.
  We must check that this is equivalent to $\F\in \DCoh_{\proper/Y} (LY)$.

First, note that $\F\in \DCoh(LY)$ if and only if $p^*\F\in \DCoh(X\times_Y LY)$. 

Let us write $W\subset LY$ for the support of $\F$ so that $p^{-1}W=X\times_Y W\subset X\times_Y LY$ is the support of   $p^*\F$.
Note that $W\to Y$ is proper if and only if $p^{-1}W \to Y$ is proper since $X\to Y$ is proper.

Since $\delta$ is affine, we have that $\delta_*p^*\F\in \DCoh(X\times_Y X)$ if and only if $p^*\F\in \DCoh(X\times_Y LY)$ and 
$p^{-1}W \to X\times_Y X$ is proper. 

Finally,
consider the diagram 
  $$
  \xymatrix{
  p^{-1}W \ar[r] & X\times_Y X \ar[r] & Y
  }
  $$
  where the second map is proper since $X\to Y$ is proper.
Thus  $p^{-1}W \to X\times_Y X$ is proper if and only if  $p^{-1}W\to Y$ and hence $W\to Y$  is proper.

We conclude that $\delta_*p^*\F\in \DCoh(X\times_Y X)$ if and only if $\F\in \DCoh(LY)$ with  proper support over $Y$.
\end{proof}

\begin{remark}
Since the theorem is  the restriction of the parallel result for quasicoherent sheaves, the central functor
 \[
\xymatrix{
\DCoh_{\proper/Y} (LY) \ar[r]^-\sim &  \DCoh(X \times_Y X)
 }
 \]
 is given by the pullback-pushforward functor $p_*\delta^*$ along the fundamental correspondence
 $$
 \xymatrix{
LY  & \ar[l]_-p LY\times_Y X \ar[r]^-\delta \simeq  (X\times_Y X) \times_{X\times X} X &    X \times_Y X 
 }
 $$   
\end{remark}


\section{Application: affine Hecke category}


We   turn now to our motivating application for the development of the preceding theory.

\subsection{Global affine Hecke category}

Let $G$ be a complex reductive group
and $B\subset G$  a Borel subgroup. Let $q:BB\to BG$ denote the natural induction map  of classifying stacks.
Passing to loop spaces, we obtain
the Grothendieck-Springer map of adjoint quotients
$$
\xymatrix{
 Lq:L(BB) \simeq B/B \simeq \widetilde G/G \ar[r] &  G/G \simeq L(BG)
}$$
where $\widetilde G$ classifies pairs of a Borel subgroup $B'\subset G$ and a group element $g\in B'$,
and $Lq$ projects to the group element and forgets the Borel subgroup.

Now we will apply the preceding theory with $X= B/B$, $Y = G/G$, and $p = Lq$. Note that $X=B/B$ and $Y=G/G$ are smooth, and $p:B/B\to G/G$ is proper. Note as well that our starting point already involves loop spaces, though that structure plays no role with respect to our general results.

\begin{defn}
Let $G$ be a complex reductive group
and $B\subset G$  a Borel subgroup. 

\begin{enumerate}
\item
 The \demph{global Steinberg stack} is the fiber product 
$$
\St_G = B/B \times_{G/G} B/B
$$

\item The \demph{global affine Hecke category} is the small stable monoidal category
$$
\H^\affine_G = \DCoh(St_G)
$$
\end{enumerate}
\end{defn}

Applying \autoref{cor: matrix}, we immediately obtain the following.

\begin{theorem}
There 
is a natural monoidal equivalence
  $$
  \xymatrix{
  \Phi : \H^\affine_G= \DCoh(B/B \times_{G/G} B/B) \ar[r]^-\sim &   \Fun^{ex}_{\Perf (G/G)}(\Perf (B/B) , \Perf (B/B) )
}  $$
compatible with actions on the module $\Perf (B/B)$.

\end{theorem}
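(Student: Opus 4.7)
The plan is to deduce this as an immediate corollary of \autoref{cor: matrix} applied with $X = B/B$, $Y = G/G$, and $p = Lq$. The hypotheses to check are that $X$ is smooth, $Y$ falls within our standing assumptions, and $p$ is proper; the monoidal structure and compatibility with the $\Perf(B/B)$-module structure are then part of the statement of that corollary.

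First I would verify smoothness of $X$ and $Y$. Both $BG$ and $BB$ are smooth classifying stacks, and their loop spaces $L(BG) \simeq G/G$ and $L(BB) \simeq B/B$ are computed as Cartesian products of smooth stacks along smooth diagonals, hence are themselves smooth. The standing assumptions (quasi-compact, almost of finite presentation, geometric, perfect) are inherited from $BG$ and $BB$, both of which are perfect in the sense of \cite{BFN}, and perfectness is stable under the fiber products involved.

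Next I would verify properness of $p = Lq$. The key is the classical identification $B/B \simeq \widetilde G/G$, where $\widetilde G = \{(g, B') : g \in B'\}$ is the Grothendieck-Springer variety; this embeds as a closed subscheme of $G \times (G/B)$ via $(g, B') \mapsto (g, [B'])$, and the composition with the first projection $G \times (G/B) \to G$ is proper since the flag variety $G/B$ is proper. Properness is preserved under passage to $G$-quotients, so $Lq \colon \widetilde G/G \to G/G$ is proper, and the theorem follows by directly invoking \autoref{cor: matrix}. There is no substantive obstacle: the entire argument is a straightforward instantiation of the corollary, and the geometric facts about $G/B$ and $\widetilde G$ are classical.
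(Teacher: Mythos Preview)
Your proposal is correct and matches the paper's approach exactly: the paper simply states ``Applying \autoref{cor: matrix}, we immediately obtain the following'' with no further argument, having already noted that $B/B$ and $G/G$ are smooth and $p = Lq$ is proper. Your additional verification of the hypotheses (smoothness via the Grothendieck--Springer description, properness via the projectivity of $G/B$) is a reasonable elaboration of what the paper leaves implicit.
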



\subsection{Local systems}

One can interpret the loop space $L(BG) \simeq G/G$ as the moduli stack of $G$-local systems on the circle $S^1$. Similarly, one can interpret the global Steinberg stack $\St_G$ as the moduli of $G$-local systems on the cylinder $S^1 \times I$ with $B$-reductions at the boundary circles $S^1 \times \partial I$.

\begin{defn}
The \demph{commuting stack} $\Loc_G(T)$  is the moduli of $G$-local systems on the two-torus $T= S^1 \times S^1$, or equivalently the twice-iterated loop space
$$
\Loc_G(T) \simeq L(L (BG))
$$
\end{defn}

\begin{remark}
The name commuting stack comes from the presentation
$$
\Loc_G(T)  \simeq \left\{ (g_1, g_2) \in G\times G \, |\, g_1 g_2 g_1^{-1} g_2^{-1} = 1\right \}/G
$$
One should be careful to understand that the commutator equation needs to be imposed in a derived sense.

\end{remark}

Let $\gg$ denote the Lie algebra of $G$. The fiber of the cotangent complex of $\Loc_G(T)$ at a local system $\P$ can be calculated by the  de Rham cochains $C^*(T, \gg^*_\P)[1]$, where $\gg^*_\P$ denotes the coadjoint bundle of $\P$. Focusing on the degree $-1$ term coming from the commutator equation, we see that there is a natural map 
$$
\xymatrix{
\res{\mu}{\P}:\res{\SuppSp_{\Loc_G(T)}}{\P}  \simeq H^0(T, \gg^*_\P) \ar[r] & \gg^*/G
}$$

Let   $\hh$ denote the Lie algebra of the universal Cartan of $G$, and $W$ the Weyl group. Recall the dual  characteristic polynomial
map, or equivalently, the projection to the coadjoint quotient
$$
\xymatrix{
\chi:\gg^*/G\ar[r] &  \gg^*/\hspace{-0.25em}/G \simeq \hh^*/W
}
$$

\begin{defn}
The \demph{global nilpotent cone} $\N \subset  \SuppSp_{\Loc_G(T)}$ is 
  the  conic closed subset  given by the inverse-image of zero under the composition
$$
\xymatrix{
\SuppSp_{\Loc_G(T)} \ar[r]^-\mu & \gg^*/G \ar[r]^-\chi & \hh^*/W
}$$

\end{defn}


\subsection{Center of affine Hecke category}

Starting from the loop space $\Loc_G(S^1) \simeq  L(BG)\simeq G/G$,  we have arrived at the commuting stack $\Loc_G(T)\simeq L(L(BG))$ by taking loops again. Thus there is a natural asymmetry to the construction: we will distinguish the projection to the first loop, or in other words, the basepoint of the second loop
$$
\xymatrix{
\Loc_G(T) \ar[r] & \Loc_G(S^1)
}
$$
Following our general results, we introduce the full subcategory
$$
\DCoh_{\proper/\Loc_G(S^1)}(\Loc_G(T))  \subset \DCoh(\Loc_G(T)) 
$$
of coherent complexes with proper support along the projection to the first loop.

 Applying \autoref{thm:center-matrix}, we immediately obtain the following.

\begin{theorem} 
 There is a natural $E_2$-monoidal identification of the center
  \[ 
\xymatrix{
  \Center(\DCoh(B/B \times_{G/G} B/B)) \simeq  \DCoh_{\proper/\Loc_G(S^1)}(\Loc_G(T)) 
}  \]
\end{theorem}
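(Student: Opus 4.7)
The plan is to apply \autoref{thm:center-matrix} directly to the setup $X = B/B$, $Y = G/G$, and $p = Lq$, which the discussion immediately preceding the theorem has already arranged. Concretely, I would organize the proof in three brief steps: verify the hypotheses of \autoref{thm:center-matrix}, identify the loop space $LY$ with $\Loc_G(T)$, and match the support condition ``$\proper/Y$'' with ``$\proper/\Loc_G(S^1)$''.

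For the first step, I need to check that $B/B$ and $G/G$ are smooth and that $Lq \colon B/B \to G/G$ is proper and surjective. Smoothness of $B/B$ and $G/G$ is standard: both are quotients of smooth schemes ($B$ acting on itself by conjugation, resp.\ $G$ acting on itself by conjugation) by smooth algebraic groups. Properness of $Lq$ follows from its description as the Grothendieck--Springer map: the map $\widetilde{G}/G \to G/G$ sends $(B', g \in B')$ to $g$, and $\widetilde{G} \to G$ is proper since it factors through the proper projection from $G \times_G \mathcal{B}$ where $\mathcal{B} = G/B$ is the flag variety. Surjectivity holds because every element of $G$ lies in some Borel subgroup. (Quasi-smoothness of $Lq$, which is not needed here but was needed in \autoref{thm:trace-matrix}, also follows since $p$ is the loop of a smooth map.)

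For the second step, observe that by definition of the loop space and the identification $L(BG) \simeq G/G$, we have
\[
LY = L(G/G) \simeq L(L(BG)) \simeq \Loc_G(T),
\]
which is exactly the definition of the commuting stack. For the third step, unravel the support condition: $\DCoh_{\proper/Y}(LY)$ consists of coherent sheaves on $LY$ whose pushforward along the basepoint map $e \colon LY \to Y$ is coherent, i.e.\ whose support is proper over $Y$. Under the identifications above, the basepoint map $e$ is precisely the projection $\Loc_G(T) \to \Loc_G(S^1)$ which restricts a local system on $T = S^1 \times S^1$ to the first $S^1$-factor. Hence $\DCoh_{\proper/Y}(LY) = \DCoh_{\proper/\Loc_G(S^1)}(\Loc_G(T))$ on the nose.

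With these identifications in hand, \autoref{thm:center-matrix} produces the desired $E_2$-monoidal equivalence
\[
\Center(\DCoh(B/B \times_{G/G} B/B)) \simeq \DCoh_{\proper/\Loc_G(S^1)}(\Loc_G(T)),
\]
and the central functor is realized geometrically as $\delta_* p^*$ along the correspondence $\Loc_G(T) \leftarrow \Loc_G(T) \times_{G/G} B/B \to \St_G$, in accordance with the remark following \autoref{thm:center-matrix}. Since every hypothesis has been verified and the theorem is invoked as a black box, there is no essential obstacle; the only substantive point is the bookkeeping ensuring that ``proper over $Y$'' translates correctly to ``proper over the first loop,'' which is immediate once one notes that $e$ is the first of the two natural projections $LY \simeq Y \times_{Y \times Y} Y \to Y$.
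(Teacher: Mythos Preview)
Your proposal is correct and follows exactly the paper's approach: the paper's proof is literally the single sentence ``Applying \autoref{thm:center-matrix}, we immediately obtain the following,'' with the hypotheses (smoothness of $B/B$ and $G/G$, properness and surjectivity of $Lq$) and the identifications $LY \simeq \Loc_G(T)$, $Y \simeq \Loc_G(S^1)$ already recorded in the surrounding text. You have simply made explicit the verifications that the paper leaves to the reader.
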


\begin{remark}
The description of the center is manifestly not $\SL_2(\ZZ)$-equivariant in contrast to that of the trace calculated below.
\end{remark}

\subsection{Trace of affine Hecke category}

Recall that we have introduced 
the global nilpotent cone $\N \subset  \SuppSp_{\Loc_G(T)}$, and can consider the corresponding full subcategory 
$$
\DCoh_{\N}(\Loc_G(T))  \subset \DCoh(\Loc_G(T)) 
$$
of coherent complexes supported along it.
All of these constructions are manifestly 
$\SL_2(\ZZ)$-equivariant.

\begin{theorem}  There is a canonical $S^1$-equivariant identification of the trace
  \[ 
\xymatrix{
  \Trace(\DCoh(B/B \times_{G/G} B/B)) \simeq \DCoh_{\N}(\Loc_G(T)) 
  }
  \]
\end{theorem}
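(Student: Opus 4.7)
The strategy is to apply \autoref{thm:trace-matrix} to the Grothendieck--Springer morphism $p = Lq \colon X = B/B \to Y = G/G$, and then identify the resulting support condition $\Lambda_{B/B,G/G} \subset \SuppSp_{\Loc_G(T)}$ with the global nilpotent cone $\N$. The stacks $B/B$ and $G/G$ are smooth, and $p$ is proper (the Springer fibers are projective), surjective (every element of $G$ lies in some Borel), and quasi-smooth (a short tangent-complex calculation at $(B', g)$ confirms $T_p$ lives in degrees $[0, 1]$, as expected for the Grothendieck--Springer resolution). Applying \autoref{thm:trace-matrix} yields the $S^1$-equivariant equivalence
\[
\Trace\bigl(\DCoh(\St_G)\bigr) \;\simeq\; \DCoh_{\Lambda_{B/B,G/G}}(\Loc_G(T)),
\]
reducing the theorem to the set-theoretic identification $\Lambda_{B/B,G/G} = \N$.

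We verify this identification fiberwise over geometric points $\P = (g_1, g_2)$ of $\Loc_G(T)$. The fiber of $\SuppSp_{\Loc_G(T)}$ at $\P$ is identified, via the Killing form, with $(\gg^*)^{g_1,g_2}$, and the fiber of $\N$ with its nilpotent subset. Using the fundamental correspondence $\St_G \xleftarrow{\delta} Z_0 = \Loc_G(T) \times_{G/G} B/B \xrightarrow{p} \Loc_G(T)$, the fiber of $Z_0$ over $\P$ is the Springer fiber $\B_{g_1}$ of Borels containing $g_1$, and a point $B' \in \B_{g_1}$ is sent by $\delta$ to the Steinberg point $(B', g_2 B' g_2^{-1}, g_1)$ (the second Borel being conjugated by the loop automorphism $g_2$). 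A direct derived tangent-complex computation for the fiber-square $\St_G = B/B \times_{G/G} B/B$ identifies, via Killing,
\[
\SuppSp_{\St_G}\big|_{(B_1, B_2, g)} \;\simeq\; (\mathfrak{n}_{B_1} \cap \mathfrak{n}_{B_2})^{g}.
\]
Unwinding the effect of $\delta^!$ and $p_*$ on these support conditions as in the fiberwise analyses developed in the proof of \autoref{thm:trace-matrix}, and observing that for $\xi \in (\gg^*)^{g_1,g_2}$ the condition $\xi \in g_2 \mathfrak{n}_{B'} g_2^{-1}$ is automatic from $\xi \in \mathfrak{n}_{B'}$ by $g_2$-invariance, we obtain
\[
\Lambda_{B/B,G/G}\big|_{\P} \;=\; \bigl\{\,\xi \in (\gg^*)^{g_1,g_2} : \xi \in \mathfrak{n}_{B'} \text{ for some } B' \in \B_{g_1}\,\bigr\}.
\]

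The main obstacle is showing this equals the set of nilpotent elements in $(\gg^*)^{g_1,g_2}$. The inclusion $\Lambda|_\P \subseteq \N|_\P$ is immediate, since each $\mathfrak{n}_{B'}$ consists of nilpotent elements. For the reverse, given a nilpotent $\xi \in (\gg^*)^{g_1, g_2}$, we must produce a Borel $B' \ni g_1$ with $\xi \in \mathfrak{n}_{B'}$. This is the content of an equivariant Springer resolution: the nilpotence and $g_1$-invariance of $\xi$ place it in the image of the moment map $T^*\B_{g_1} \to (\gg^*)^{g_1}$ (established by a Jacobson--Morozov argument carried out inside the centralizer $Z_G(g_1)$), which by definition means $\xi \in \mathfrak{n}_{B'}$ for some $B' \in \B_{g_1}$. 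The $S^1$-equivariance of the final identification is inherited automatically from the cyclic naturality of \autoref{thm:trace-matrix}.
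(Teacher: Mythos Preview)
Your overall strategy is exactly the paper's: apply \autoref{thm:trace-matrix} to $p\colon B/B\to G/G$, then identify $\Lambda_{X/Y}$ with $\N$ fiberwise. Your description of the fiber $\Lambda|_{\P}$ as $\{\xi\in(\gg^*)^{g_1,g_2}:\xi\in\mathfrak n_{B'}\text{ for some }B'\ni g_1\}$ matches the paper's, and the inclusion $\Lambda\subset\N$ is handled the same way.

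The gap is in the reverse inclusion. You assert that a nilpotent $\xi\in(\gg^*)^{g_1,g_2}$ lies in the image of ``$T^*\B_{g_1}\to(\gg^*)^{g_1}$'' by a Jacobson--Morozov argument inside $Z_G(g_1)$. Two problems. First, the relevant space is $(T^*\B)^{g_1}$, not $T^*(\B_{g_1})$; these have different fibers over a Borel $B'$. Second, and more seriously, $Z_G(g_1)$ need not be reductive when $g_1$ is not semisimple, so Jacobson--Morozov is not directly available there. One can try to repair this via the Jordan decomposition $g_1=su$ and work in the reductive group $Z_G(s)$, but then one must still arrange for the unipotent part $u$ to lie in the chosen Borel, and this is not immediate from $[u,\xi]=0$. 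The argument as written is incomplete.

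The paper's route around this is short and robust: a nilpotent $\xi$ exponentiates to a $\GG_a\simeq\AA^1$-action on the flag variety $G/B$; since $\xi$ commutes with $g_1$, this action preserves the nonempty projective subvariety $\B_{g_1}=(G/B)^{g_1}$; and any $\AA^1$-action on a nonempty projective variety has a fixed point. That fixed point is the desired Borel $B'$ with $g_1\in B'$ and $\xi\in\mathfrak n_{B'}$. This avoids any reductivity hypothesis on the centralizer.
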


\begin{proof}
Set $X= L(BB) \simeq B/B$, $Y=L(BG) \simeq G/G$, and $p= Lq$ for $q:BB\to BG$.

Applying \autoref{thm:trace-matrix},  it remains to identify the support condition $\Lambda_{X/Y} \subset \SuppSp_{LY}$ as described therein with the global nilpotent locus $\N \subset  \SuppSp_{\Loc_G(T)}$.

Recall $\mathfrak{g}$ denotes the Lie algebra of $G$. Let $\mathfrak{b}$ denote the Lie algebra of $B$. Let $N\subset B$ denote the maximal unipotent subgroup, and $\mathfrak{n}$ its Lie algebra. Fix an invariant inner product on $\mathfrak{g}$, so that we have an identification $\mathfrak g\simeq \mathfrak g^*$, and in particular an identification
$\mathfrak{n}= \ker(\mathfrak{g}^* \to \mathfrak{b}^*)$

Fix a geometric point $\eta:\Spec k\to \Loc_G(T) \simeq L(L(BG))$ given by a pair of commuting elements $(\alpha, \beta) \in G\times G$. 
Note that the intermediate stack $LY\times_Y X$ is the moduli of $G$-bundles on $S^1 \times S^1$ with a $B$-reduction along the first loop. 
Thus we have the identifications:
  \[ 
  \res{\SuppSp_{\Loc_G(T)}}{\eta} = \left\{ v \in \mathfrak{g}^* \simeq \mathfrak g \colon 
 \ad_\alpha v=\ad_\beta v=v 
  \right\} \]
  \[ 
  \res{\Lambda_{X/Y}}{\eta} = \left\{ v \in    \res{\SuppSp_{\Loc_G(T)}}{\eta}  \colon 
   \exists g \in G \text{ such that }   
\ad_g \alpha  \in B,  
  \ad_g v \in \mathfrak{n}) 
  \right\} \]

    Recall that $v \in \mathfrak{g}^*\simeq \mathfrak{g}$ is nilpotent if and only if there exists $g \in G$ such that 
    $\ad_g v \in \mathfrak{n}$.  Thus we clearly have the containment:
    $$
\res{\Lambda_{X/Y}}{\eta}  \subset      \res{\N}{\eta} = 
\left\{ v \in    \res{\SuppSp_{\Loc_G(T)}}{\eta}  \colon 
   \exists g \in G \text{ such that }   
  \ad_g v \in \mathfrak{n}) \right\}
    $$
     
   Thus  it suffices to show that two commuting elements $v\in \mathfrak g$, $\alpha\in G$ with $v$ nilpotent are contained in a Borel subgroup $B\subset G$. Equivalently, it suffices to show two  such elements simultaneously fix a point of the flag variety $G/B$.
   Note that a nilpotent element $v\in \gg$ generates an $\AA^1$-action on $G/B$, and the action preserves the fixed points of the element $\alpha\in G$. Since the fixed points of $\alpha$ are a nonempty projective variety, the $\AA^1$-action must have a fixed point. This concludes the proof.
   \end{proof}

\begin{remark}\label{rem:surj-example}
  Note that the map $\Loc_B(T)   \to \Loc_G(T)$  is not necessarily surjective, even though $B/B  \to G/G$ always is.
 For example, consider $G=\PGL_2$ and two commuting elements $\alpha, \beta \in \PGL_2$ that are not contained in any Borel subgroup $B\subset G$. However, if the derived group of $G$ is simply-connected, then the map is in fact surjective.  
 In this case, one can derive the global nilpotent cone $\N \subset  \SuppSp_{\Loc_G(T)}$ directly from the map 
 $\Loc_B(T)   \to   \Loc_G(T)$.
\end{remark}

\bibliography{coh-steinberg-bib}

\end{document}